\renewcommand{\theequation}{\arabic{section}.\arabic{equation}}
\renewcommand\thefigure{\thesection.\@arabic\c@figure}
\renewcommand\thetable{\thesection.\@arabic\c@table}
 \newcommand{\new}{\newcommand*}
 \new{\rnew}{\renewcommand*}
 \new{\newe}{\newenvironment*}
 \new{\stl}{\setlength}
 \stl{\arraycolsep}{0.5mm}
\newtheorem{thm}{\bf Theorem}
\newtheorem{col}{Corollary}[section]
\newenvironment{theorem}{\begin{thm}} {\end{thm}}
\newtheorem{lmm}{\bf Lemma}
\newenvironment{lemma}{\begin{lmm}}{\end{lmm}}
\theoremstyle{remark}
\newtheorem{rem}{Remark}[section]
\theoremstyle{definition}
\newtheorem{defn}{Definition}[section]
\renewcommand{\theequation}{\arabic{section}.\arabic{equation}}
\newcommand {\bgeq}[1]{\begin{equation}\label{#1}}
\newcommand \edeq {\end{equation}}
\newcommand \bgth {\begin{theorem}\label}
\newcommand \edth {\end{theorem}}
\newcommand \bglm {\begin{lemma}\label}
\newcommand \edlm {\end{lemma}}
\newcommand {\bgar}[1]{\begin{array}{#1}}
\newcommand {\edar}{\end{array}}
\def \pd{\partial}
\def \ds{\displaystyle}
\def \cal {\mathcal}
\title[$hp$-version $C^1$-CPG method for second-order IVPs]
{$hp$-version  $C^1$-continuous Petrov-Galerkin method for
nonlinear second-order initial value problems with application  to
wave equations}
\author[L. Wang, \quad M. Zhang   \quad H. Tian, \quad \mbox{and} \quad L. Yi]
{Lina Wang$^{1,2}$,  \quad Mingzhu Zhang$^{1}$, \quad  Hongjiong
Tian$^{1,3}$,    \quad and \quad Lijun Yi$^{1,3,\dag}$}
\subjclass[2000]{65L60,  65L05,  65L70}
\keywords{$hp$-version; $C^1$-continuous Petrov-Galerkin method;
second-order initial value problem; wave equation;
superconvergence.}
\thanks{$^{\dag}$Corresponding author,  E-mail address: ylj5152@shnu.edu.cn (L. Yi).}
\thanks{$^*$The work of H. Tian is supported   by the National Natural Science
Foundation of China (Grant No. 12271368). The work of L. Yi is
supported   by the National Natural Science Foundation of China
(Grant Nos. 12171322, 11771298 and 12271366), the Natural Science
Foundation of Shanghai (Grant Nos. 21ZR1447200  and 22ZR1445500),
and the Science and Technology Innovation Plan of Shanghai (Grant
No. 20JC1414200).}
\begin{document}
\maketitle
{\footnotesize

 \centerline{$^1$Department of  Mathematics, Shanghai Normal University, Shanghai
200234, China}

\vskip 1mm
 \centerline{$^2$School of Science, Henan University of Engineering, Zhengzhou  451191, China}

\vskip 1mm

 \centerline{$^3$Scientific Computing Key Laboratory of
Shanghai Universities, Shanghai 200234, China}
 }

\begin{abstract}
We introduce and analyze  an $hp$-version  $C^1$-continuous
Petrov-Galerkin (CPG)  method for  nonlinear  initial value problems
of second-order ordinary differential equations.
 We derive  a-priori error estimates in the $L^2$-, $L^\infty$-,
 $H^1$- and $H^2$-norms   that are completely
 explicit in the local  time steps and local approximation
degrees.   Moreover, we show that the $hp$-version    $C^1$-CPG
  method superconverges at the nodal points of the time partition
  with regard to the time steps and approximation degrees.
As an application, we apply  the $hp$-version   $C^1$-CPG
  method to  time discretization of  nonlinear wave equations.
Several numerical  examples  are presented to verify the theoretical
results.
\end{abstract}

\section{ Introduction}

The initial value problems (IVPs)  of second-order ordinary
differential equations (ODEs) have been widely  used  in many
fields. Moreover, a large number of second-order time dependent
problems, especially nonlinear wave equations,  such as the
sine-Gorden and  Klein-Gorden
  equations, are often transformed into IVPs of second-order ODEs
after appropriate spatial discretization methods. During the past
few decades,  great progress has been made in the study of numerical
methods for solving  the IVPs of  ODEs. The most popular and
frequently used approaches  for the numerical integration of
second-order ODEs  are mainly based on  implicit and explicit finite
difference, Runge-Kutta, collocation, and Newmark-type schemes. For
a general overview we  refer the reader to  the monographs
\cite{B08,HLW,HNW,HW,L}  and the references therein.

Galerkin-type methods for solving   IVPs of ODEs can be traced back
to the 1970s. We mention here the papers by  \cite{Hu1,Hu2}, where
the continuous Galerkin (CG) schemes have been introduced and
analyzed for  first-order IVPs \cite{EF}. The
discontinuous Galerkin (DG) schemes  have  also been studied for the
numerical integration of IVPs \cite{DD,DHT}. It is worth noting
that  the   error analyses of the above works were mainly based on
the   $h$-version approach, namely, the convergence is achieved by
decreasing the  time steps at a fixed and typically low-degree
approximation. This is in contrast to the concepts of $p$- and
$hp$-version approaches (originated from  the finite element
methods), where the $p$-version approach uses a fixed partition but
increases the polynomial degree  to increase accuracy and  the
$hp$-version approach combines    the $h$- and $p$-refinement
techniques. For an overview of the $p$- and $hp$-version methods, we
refer the reader   to  the monographs \cite{Schwab,SB} and an
excellent survey paper \cite{Ba-1}.

Due to  great flexibility with respect to the  local time steps and
local approximation degrees,  the  $hp$-version Galerkin methods for
numerical solutions of  IVPs have been widely studied in recent
years; see, e.g.,  the pioneer work by \cite{SS}, where an
$hp$-version DG time stepping method has been  introduced and
analyzed for  first-order IVPs;  see also \cite{Wihler,Wihler1,Y2}, where  $hp$-version CG methods  for first-order IVPs
have been studied. Relevant applications and analysis of the
$hp$-version    DG and CG methods for parabolic problems,
integro-differential equations, and fractional differential
equations can be found in \cite{BS,MS,SS1,ST,WGSS,YG3}.

Very recently, an $hp$-version   DG method was introduced  for
  linear second-order IVPs in \cite{AMDNQ}, where   suboptimal
error estimate  (with respect to the  polynomial degree) was
obtained in a suitable mesh-dependent norm.
 Moreover,  an $hp$-version   $C^0$-continuous Petrov-Galerkin ($C^0$-CPG)
method  based on the CG and DG  methodologies  was developed in
\cite{WY} for  nonlinear second-order IVPs, where the globally
$C^0$-continuous piecewise polynomials were used for the trial
spaces  and  an optimal $H^1$-error estimate was proved. However, it
seems more natural to use $C^1$-continuous approximations for
second-order IVPs due to the following considerations:

\begin{itemize}

 \item

The  DG \cite{AMDNQ} and  $C^0$-CPG \cite{WY} formulations
 incorporate the initial values in a weak sense and
thus leads to the appearance of jump terms (on functions or/and
derivatives) in the numerical scheme,  which brings additional
consideration in the
  analysis and computation. In contrast,
  the  $C^1$-CPG  method presented in this article shall produce globally
$C^1$-continuous  approximations and  there are no jump terms in the
numerical scheme, which greatly  simplifies the analysis.

\item For a given polynomial degree, the $C^1$-CPG method has fewer   degrees of
freedom (DOF) than the  DG \cite{AMDNQ} and $C^0$-CPG \cite{WY}
methods  on each time interval. From another point of view, if we
employ the same number of DOF on each time interval, the $C^1$-CPG
method  exhibits higher convergence rates (with respect to
 the time steps) than the DG and $C^0$-CPG methods at least for
smooth solutions.

\end{itemize}

In this paper  we  consider the numerical integration of  a
nonlinear second-order IVP of the form
\begin{eqnarray}\label{IVP}
\left\{\begin{array}{ll} u''(t)=f(t,u(t),u'(t)),\quad & t\in  [0,T],\\[2mm]
u(0)=u_0,\quad u'(0)=u_1,
\end{array}\right.
\end{eqnarray}
although the results carry over to systems of such equations. Let
$I:=(0,T)$ for some $T>0$. Here, $u: \bar{I}\rightarrow \mathbb{R}$
denotes the unknown solution, $f: \bar{I} \times \mathbb{R} \times
\mathbb{R} \rightarrow \mathbb{R}$ is a given function,  and the
initial values $u_0, u_1 \in \mathbb{R}$. We assume that the
function $f$ in (\ref{IVP}) is continuous for $t\in \bar{I}$ and
satisfies the following  uniformly Lipschitz condition
\begin{equation}\label{Lip-con}
|f(t,u_1,v_1)-f(t,u_2, v_2)|\le
L\left(|u_1-u_2|+|v_1-v_2|\right),\quad \forall u_i, v_i \in
\mathbb{R},~~ i=1,2
\end{equation}
for $t\in \bar{I}$,  where $L>0$ is the Lipschitz constant that is
independent of $t\in \bar{I}$.

The main purpose of  this paper is to propose and analyze an
$hp$-version $C^1$-CPG method   for the numerical integration of the
problem (\ref{IVP}). Here, the $C^1$-CPG method  uses
$C^1$-continuous piecewise polynomials for the trial spaces and
discontinuous piecewise polynomials for the test spaces. Due to
discontinuous character of the test spaces, the $C^1$-CPG scheme can
be decoupled into local problems on each time step, and thus it can
be regarded as a time stepping scheme. We show that the $hp$-version
$C^1$-CPG scheme is well defined provided that the time steps are
sufficient small. Based on a piecewise projector (on to the trial
spaces) $\Pi^{{\bf{r}}}u$ (see (\ref{pir})), we present a rigorous
 error analysis of the $hp$-version $C^1$-CPG method.
  We derive a-priori error  estimates in the $L^2$-, $L^\infty$-,
 $H^1$-  and $H^2$-norms that are completely explicit  in the local time steps and local approximation
degrees.   These error estimates  imply that the  $hp$-version
$C^1$-CPG method can achieve arbitrary high order convergence rates
(i.e., spectral accuracy) provided that the solution is  smooth
enough. Moreover, we prove that the $hp$-version $C^1$-CPG method
superconverges at the nodal points of the time partition   with
respect to the time steps and approximation degrees.


%
%
%
%
%
%

As an application, we apply the $hp$-version  $C^1$-CPG method
 developed for the  second-order IVP (\ref{IVP}) to   nonlinear wave equations. Specifically, we use the $hp$-version
$C^1$-CPG time stepping method to handle the time integration of the
second-order nonlinear  differential system arising after spatial
discretization obtained by the standard spectral Galerkin or
conforming finite element Galerkin method.

The apaper is organized as follows. In Section \ref{sec2}, we
introduce the $hp$-version   $C^1$-CPG   method
 for  the problem (\ref{IVP}) and prove the  existence and uniqueness of the discrete solutions.
In Section \ref{sec3}, we carry out a rigorous error analysis of the
$hp$-version  $C^1$-CPG  scheme.  In Section \ref{sec4}, we apply
the $hp$-version  $C^1$-CPG   method to  time discretization of the
nonlinear second-order  wave equations. In Section \ref{sec5}, we
present some numerical examples  to illustrate the  theoretical
results. Finally, we give some concluding remarks in Section
\ref{sec6}.

Throughout the paper,  we shall follow the usual notations  and
conventions for the
 Sobolev spaces  and their norms \cite{ada}.
  For an open interval $J$, we denote by $L^2(J)$  the   Lebesgue space of square integrable functions on
$J$ with values in $\mathbb{R}$  and by $L^\infty(J)$ the space of
all bounded functions on $J$. For any  non-negative integer $k$,  we
equip the Sobolev space $W^{k,p}(J)$ with the standard norm
$\|\cdot\|_{W^{k,p}(J)}$ and seminorm  $|\cdot|_{W^{k,p}(J)}$. The
fractional-order space $W^{s,p}(J), ~s \ge 0$, is defined by the
usual $K$-method of interpolation. In particular, we   set
$H^s(J)=W^{s,2}(J)$. Moreover, we denote by $C$  a  generic positive
constant independent of the discretization parameters of interest
(such as time steps and approximation degrees) but may take
different values in different places.

\section{$hp$-version of the $C^1$-continuous Petrov-Galerkin
method}\label{sec2}

In this section, we shall introduce  the $hp$-version $C^1$-CPG
method for  problem (\ref{IVP}) and   discuss the well-posedness and
algebraic form of the proposed scheme.

\subsection{Galerkin time discretization}

We first introduce  an arbitrary partition $\cal T_h=\{I_n\}_{n=1}^N
$ of the time interval $I=(0,T)$ into $N$  subintervals
$I_n:=(t_{n-1},t_n),  ~1\le n\le N$, with the nodal points given by
$$0=t_0 < t_1 <t_2 < \cdots < t_{N-1} < t_N=T.$$
We define the local time steps $k_n:=t_n-t_{n-1}, ~1\le n\le N$ and
denote by $k=\max_{1\le n\le N} \{ k_n\}$ the length of the largest
subinterval. Moreover, we assign to each subinterval $I_n$ an
approximation degree $r_n \ge 2$  and store these  polynomial
degrees in the vector ${\bf r}=\{r_n \}_{n=1}^N$. Then,   the
$hp$-version trial and test spaces used for the Galerkin
discretization  of (\ref{IVP}),  are given by
\begin{eqnarray}\label{trial}
S^{{\bf r}, 2 }(\cal T_h)=\{ u\in H^2(I):  u|_{I_n} \in
P_{r_n}(I_n),  1\le n\le N \}
\end{eqnarray}
 and
\begin{eqnarray}\label{test}
S^{{\bf r}-2, 0 }(\cal T_h)=\{ u\in L^2(I):  u|_{I_n} \in
P_{r_n-2}(I_n),  1\le n\le N \},
\end{eqnarray} respectively. Here,  we denote by
$P_{r_n}(I_n)$   the space of polynomials of degree at most $r_n$ on
$I_n$, and the space $P_{r_n-2}(I_n)$ is defined analogously.

The  $hp$-version of the $C^1$-CPG method for  (\ref{IVP}) is: find
$U \in S^{{\bf r}, 2 }(\cal T_h)$ such that
\begin{eqnarray}\label{C1CPG-FEM}
 \left\{\begin{array}{ll}
\ds\sum\limits_{n=1}^N \ds\int_{I_n} U''(t)\varphi(t)dt =
\ds\sum\limits_{n=1}^N \ds\int_{I_n} f(t,U(t),U'(t))\varphi(t)dt  ,\\[4mm]
U(0) = u_0,\quad U'(0) = u_1
\end{array}\right.
\end{eqnarray}
for all $ \varphi \in S^{{\bf r}-2, 0 }(\cal T_h)$.

\begin{rem}\label{rem1}
Due to the discontinuous character of the test space $S^{{\bf r}-2,
0 }(\cal T_h)$, the  $hp$-version  $C^1$-CPG method in
(\ref{C1CPG-FEM}) can be regarded as a time stepping scheme:  if the
$C^1$-CPG solution $U$ is given on the time intervals $I_m, 1\le m
\le n-1,$ we can find $U|_{I_n} \in P_{r_n}(I_n)$  by solving the
following   problem
\begin{eqnarray}\label{C1CPG-FEM-1}
\left\{\begin{array}{ll}
 \ds\int_{I_n} U''(t)\varphi(t)dt =
 \ds\int_{I_n} f(t,U(t),U'(t))\varphi(t)dt,\\[4mm]
U|_{I_n} (t_{n-1}) = U|_{I_{n-1}} (t_{n-1}),\quad
U'|_{I_n} (t_{n-1}) = U'|_{I_{n-1}} (t_{n-1})
\end{array}\right.
\end{eqnarray}
for all $\varphi \in  P_{r_n-2}(I_n)$. Here,  $U|_{I_1}(0)=u_0$ and
$U'|_{I_1}(0)=u_1$.
\end{rem}

 We next show the existence and  uniqueness of the discrete
 solutions. The proof of this lemma will be given
in Appendix \ref{app-A-1}.

\begin{lemma}\label{exi-uni}
  Assume that the partition $\cal T_h$  of $(0,T)$ satisfies
 \begin{equation}\label{mesh-cond}
\ds\frac{Lk_n}{2}\sqrt{8+k_n^2} <1, \quad 1\le n \le N.
\end{equation}
Then, the discrete problem (\ref{C1CPG-FEM}) admits a unique
solution $U \in S^{{\bf r}, 2}(\cal{T}_h)$.
  \end{lemma}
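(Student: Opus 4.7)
The decoupled nature of the scheme noted in Remark \ref{rem1} suggests proving the statement inductively on $n$. Assuming that $U|_{I_m}$ has been uniquely determined for $1 \le m \le n-1$ — which in particular fixes the local initial data $a := U|_{I_{n-1}}(t_{n-1})$ and $b := U'|_{I_{n-1}}(t_{n-1})$ — I will solve the local problem \refe{C1CPG-FEM-1} on $I_n$ by a Banach fixed-point argument and then invoke induction. For the base case $n=1$, the data are the prescribed initial values $u_0$ and $u_1$.

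Introduce the affine subspace
$$
A_n := \{V \in P_{r_n}(I_n) : V(t_{n-1}) = a,\ V'(t_{n-1}) = b\},
$$
and define $\Phi : A_n \to A_n$ by $\Phi(V) := W$, where $W \in A_n$ is the unique polynomial satisfying
$$
\int_{I_n} W''(t)\varphi(t)\,dt = \int_{I_n} f(t, V(t), V'(t))\varphi(t)\,dt \qquad \forall\, \varphi \in P_{r_n-2}(I_n).
$$
Well-posedness of this linear auxiliary problem is immediate: since $W'' \in P_{r_n-2}(I_n)$, testing the homogeneous version with $\varphi = W''$ forces $W''=0$, and the two vanishing nodal conditions then yield $W=0$; a dimension count ($r_n + 1$ unknowns versus $(r_n - 1) + 2$ linear conditions) completes the argument. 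Fixed points of $\Phi$ are exactly the solutions of \refe{C1CPG-FEM-1}, so it suffices to prove that $\Phi$ is a strict contraction.

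For $V_1, V_2 \in A_n$, the difference $V := V_1 - V_2$ lies in the linear subspace
$$
Z_n := \{V \in P_{r_n}(I_n) : V(t_{n-1}) = V'(t_{n-1}) = 0\},
$$
on which $\|V\|_{*} := \|V''\|_{L^2(I_n)}$ is a genuine norm (by the argument just given). The difference $W := \Phi(V_1) - \Phi(V_2)$ also lies in $Z_n$, and testing its defining equation against $\varphi = W'' \in P_{r_n-2}(I_n)$ together with the Lipschitz condition \refe{Lip-con} gives
$$
\|W''\|_{L^2(I_n)}^2 \le L \int_{I_n}\bigl(|V| + |V'|\bigr)|W''|\,dt.
$$
Combined with the Poincar\'e-type estimates
$$
\|V\|_{L^2(I_n)} \le \tfrac{k_n^2}{2}\|V''\|_{L^2(I_n)}, \qquad \|V'\|_{L^2(I_n)} \le \tfrac{k_n}{\sqrt{2}}\|V''\|_{L^2(I_n)},
$$
which follow from $V(t) = \int_{t_{n-1}}^t V'(s)\,ds$, $V'(t) = \int_{t_{n-1}}^t V''(s)\,ds$ and Cauchy--Schwarz, this yields an inequality of the form $\|W''\|_{L^2(I_n)} \le L\,C(k_n)\,\|V''\|_{L^2(I_n)}$ with $C(k_n)$ explicit in $k_n$. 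Hypothesis \refe{mesh-cond} is exactly what secures $L\,C(k_n) < 1$. Since $A_n$ endowed with the translation-invariant metric $d(V_1, V_2) := \|V_1 - V_2\|_{*}$ is a complete (finite-dimensional) metric space, Banach's fixed-point theorem produces a unique $U|_{I_n} \in A_n$, closing the induction.

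The main technical obstacle is the careful bookkeeping of constants: the clean form $\tfrac{Lk_n}{2}\sqrt{8 + k_n^2}$ in \refe{mesh-cond} strongly suggests a specific treatment of the mixed term $|V||V'|$ in the expansion $(|V| + |V'|)^2 = V^2 + 2|V||V'| + V'^2$ via a weighted Young inequality, tuned so that the resulting combination of the two Poincar\'e constants collapses precisely into the stated expression. Apart from identifying that optimal balance, every step of the argument is routine.
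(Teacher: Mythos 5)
Your overall strategy coincides with the paper's own proof in Appendix \ref{app-A-1}: the authors likewise reduce, via Remark \ref{rem1}, to the local problem on $I_n$, define the same auxiliary linear solve (your $\Phi$ is their $G$), and apply Banach's fixed-point theorem; your justification of the solvability of the auxiliary problem (testing the homogeneous problem with $W''$, then a dimension count) is in fact more explicit than theirs. The only structural deviation is the contraction metric: you contract in $\|V''\|_{L^2(I_n)}$ on the affine slice, the paper in $\|\cdot\|_{H^1(I_n)}$ on all of $P_{r_n}(I_n)$, and these are interchangeable. However, the step you leave open --- recovering the exact constant in \refe{mesh-cond} --- conceals a genuine gap, and your diagnosis of how to close it is incorrect. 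With the Poincar\'e constants you state, $\|V\|_{L^2(I_n)}\le \frac{k_n^2}{2}\|V''\|_{L^2(I_n)}$ and $\|V'\|_{L^2(I_n)}\le\frac{k_n}{\sqrt2}\|V''\|_{L^2(I_n)}$, no treatment of the mixed term can produce \refe{mesh-cond}: the optimal weighted Young inequality applied to $2|V||V'|$ collapses exactly to the Minkowski bound $\bigl\| |V|+|V'| \bigr\|_{L^2}\le \|V\|_{L^2}+\|V'\|_{L^2}$, so the best condition your constants can deliver is $\frac{Lk_n}{2}(k_n+\sqrt2)<1$. Since $(k_n+\sqrt2)^2>k_n^2+8$ as soon as $k_n>3/\sqrt2$, hypothesis \refe{mesh-cond} does not imply this for large steps: for instance $k_n=10$, $L=0.018$ satisfies \refe{mesh-cond} (the factor is about $0.94$) but gives $L\bigl(\tfrac{k_n^2}{2}+\tfrac{k_n}{\sqrt2}\bigr)\approx 1.03>1$. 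Nothing in the lemma forbids such pairs, so your proof as written does not cover the stated hypothesis.

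The actual fix --- and the one place where the paper's estimates are sharper than yours --- is the zeroth-order bound. Your iterated estimate $\|V\|\le\frac{k_n}{\sqrt2}\|V'\|\le\frac{k_n^2}{2}\|V''\|$ loses a factor of $\sqrt2$; the paper instead writes $V(t)=\int_{t_{n-1}}^{t}\int_{t_{n-1}}^{\eta}V''(s)\,ds\,d\eta$ and applies Cauchy--Schwarz to the double integral, obtaining $\|V\|_{L^2(I_n)}\le \frac{k_n^2}{2\sqrt2}\|V''\|_{L^2(I_n)}$ (their estimate \refe{uniq-15}). With this replacement your argument closes: $L\bigl(\|V\|_{L^2(I_n)}+\|V'\|_{L^2(I_n)}\bigr)\le \frac{Lk_n(k_n+2)}{2\sqrt2}\|V''\|_{L^2(I_n)}$, and since $2(k_n^2+8)-(k_n+2)^2=(k_n-2)^2+8>0$ for every $k_n$, condition \refe{mesh-cond} yields $\frac{Lk_n(k_n+2)}{2\sqrt2}\le\frac{Lk_n}{2}\sqrt{k_n^2+8}<1$, i.e.\ strict contraction in your $\|\cdot\|_{*}$ metric (the paper reaches the same factor $\frac{Lk_n}{2}\sqrt{8+k_n^2}$ by pairing the sharp $L^2$ bound with the cruder $\|V'\|\le k_n\|V''\|$ and contracting in $H^1$). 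Everything else in your proposal --- the induction over intervals, the well-posedness of the linear solve, the completeness of the affine slice under the $\|\cdot\|_{*}$ metric --- is correct.
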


\begin{rem}
Lemma \ref{exi-uni} shows that the existence and uniqueness of
  the  $hp$-version $C^1$-CPG  solution is completely
  independent of the approximation degrees $r_n, 1\le n \le N$.
\end{rem}

\subsection{Algebraic formulation}

Clearly, the $C^1$-CPG formulation
(\ref{C1CPG-FEM-1}) can be understood as an implicit single-step
scheme. We now derive  the algebraic formulation corresponding to
the local variational problem (\ref{C1CPG-FEM-1}) on $I_n,~ 1\le n\le
N$.

 Suppose that $\{\phi_{n, l}(t)\}_{l=1}^{r_n+1}$ is
a set of basis of the polynomial space $P_{r_n}(I_n)$. Let
$U_n(t):=U|_{I_n}$ be the $C^1$-CPG  approximation of $u$ on $I_n$.
Then, the local approximation
 $U_n \in P_{r_n}(I_n)$  can be written as
\begin{equation}\label{solu}
U_n(t)=\ds\sum_{l=1}^{r_n+1} \hat{u}_{n,l}\phi_{n, l}(t).
\end{equation}
Inserting  (\ref{solu}) into (\ref{C1CPG-FEM-1}) and selecting the
test function $\varphi=\phi_{n, l}(t)$ with $1\le l \le r_n-1$ leads
to the following nonlinear system
\begin{equation}\label{nonlinear-sys}
A_n \widehat{U}_n=F_n(\widehat{U}_n),
\end{equation}
 where the solution vector
$\widehat{U}_n:=\left(\hat{u}_{n,1}, \hat{u}_{n,2}, \cdots,
\hat{u}_{n, r_n+1}\right)^T$, the matrix $A_n=
\left(a^n_{i,j}\right)_{1\le i, j\le r_n+1}$ is given by
 \begin{equation*}\label{matrx}
A_n=
\begin{pmatrix}
\ds\int_{I_n}\phi_{n, 1}''\phi_{n, 1}dt  \quad & \ds\int_{I_n}\phi_{n, 2}''\phi_{n, 1}dt \quad& \cdots  \quad&  \ds\int_{I_n}\phi_{n, r_n+1}''\phi_{n, 1}dt\\
 \vdots  \quad &  \vdots \quad& \vdots  \quad&   \vdots\\
\ds\int_{I_n}\phi_{n, 1}''\phi_{n, r_n-1}dt  \quad &
\ds\int_{I_n}\phi_{n, 2}''\phi_{n, r_n-1}dt \quad& \cdots  \quad&
\ds\int_{I_n}\phi_{n, r_n+1}''\phi_{n, r_n-1}dt\\
\phi_{n, 1}(t_{n-1})  \quad &  \phi_{n, 2}(t_{n-1})  \quad& \cdots  \quad&   \phi_{n, r_n+1}(t_{n-1}) \\
\phi_{n, 1}'(t_{n-1})  \quad &  \phi_{n, 2}'(t_{n-1})  \quad& \cdots
\quad&   \phi_{n, r_n+1}'(t_{n-1})
\end{pmatrix}
 \end{equation*}
and the right-hand side vector $F_n(\widehat{U}_n) \in
\mathbb{R}^{(r_n+1)}$ is given by
 \begin{equation*}
F_n(\widehat{U}_n)=
\begin{pmatrix}
\ds\int_{I_n} f\Big(t, \sum_{l=1}^{r_n+1} \hat{u}_{n,l}\phi_{n, l}(t), \sum_{l=1}^{r_n+1} \hat{u}_{n,l}\phi_{n, l}'(t)\Big)\phi_{n, 1}(t)dt \\
\vdots \\
\ds\int_{I_n} f\Big(t, \sum_{l=1}^{r_n+1} \hat{u}_{n,l}\phi_{n, l}(t), \sum_{l=1}^{r_n+1} \hat{u}_{n,l}\phi_{n, l}'(t)\Big)\phi_{n, r_n-1}(t)dt \\
U_{n-1}(t_{n-1})\\
U_{n-1}'(t_{n-1})
\end{pmatrix}.
 \end{equation*}
 Here,  $U_{n-1}(t_{n-1})$ and $U_{n-1}'(t_{n-1})$ are given   values from the previous time
 step $I_{n-1}$.

In practice,  the entries   of the matrix $A_n$ can be precomputed
exactly for a given polynomial degree $r_n$ without numerical
quadrature. For example, if we use the following shifted Legendre
polynomials as a set of basis for the polynomial space
$P_{r_n}(I_n)$, i.e.,
$$\phi_{n, l}(t):=L_{l-1}\left( \frac{2t-t_n-t_{n-1}}{k_n} \right),
\quad t\in I_n, ~~1\le l \le r_n+1,$$ where $L_l$ denotes the
standard Legendre polynomial of degree $l$. Noting the fact that
\cite{STW}
$$ L_l''(x)=\ds\sum_{\substack{m=0\\[0.5mm]m+l ~even}}^{l-2} (m+\frac 12)\big( l(l+1)-m(m+1)
\big)L_m(x), \quad l\ge 2,$$ and then using  orthogonality property
of the Legendre polynomials, we have
\begin{equation}\label{matx-a}
a_{i,j}^n=\ds\int_{I_n}\phi_{n, j}''\phi_{n, i}dt=\frac{2}{k_n}
\ds\int_{-1}^1 L_{j-1}''L_{i-1}dx = \left\{
\begin{aligned}
&\frac{2}{k_n}(i+j-1)(j-i), &&~~ j-2\ge i~~\mbox{and}~~i+j~~\mbox{is even},\\
&0,   &&~~  \mbox{otherwise}
\end{aligned}
\right.
\end{equation}
for $1\le i\le r_n-1$ and $1\le j \le r_n+1$.
 Moreover, using the
facts $L_l(-1)=(-1)^l,~ l\ge 0$ and $L_l'(-1)=\frac
12(-1)^{l-1}l(l+1),~l\ge 1$, we further get the entries of the last
two rows of $A_n$, i.e.,
$$a_{r_n,j}^n= \phi_{n, j}(t_{n-1})=(-1)^{j-1}, \quad 1\le j\le r_n+1,$$
$$a_{r_n+1,1}^n= \phi_{n, 1}'(t_{n-1})=0, \quad a_{r_n+1,j}^n= \phi_{n, j}'(t_{n-1})=\frac{(-1)^j(j-1)j}{k_n}, \quad 2\le j\le r_n+1.$$
Due to (\ref{matx-a}),    $A_n$ is a sparse matrix.

 In order to obtain a global
$C^1$-CPG approximation, we only need to solve the local algebraic
system (\ref{nonlinear-sys})  step by step on $I_n, ~1\le n\le N$,
which avoids solving a large system of nonlinear equations.
Moreover, (\ref{nonlinear-sys}) is a small nonlinear algebraic
system with only $r_n+1$ unknowns, and hence we can use the usual
iterative methods such as the Newton's method and the simple fixed
point iteration method to solve it very accurately.

\begin{rem}
We underline that,  the  matrix $A_n$ appears in the nonlinear
system (\ref{nonlinear-sys}) can be precomputed by an analytical
way. In particular, if we employ uniform time partitions and uniform
polynomial degrees, then the matrix $A_n$ at each time step $I_n,
~1\le n\le N$, is the same one. This implies that we can input $A_n$
once and for all before the time loop, which saves a lot of
computational  time and storage. As a result, the overall
computational  cost at each time step $I_n$ is dominated by the cost
of calculating the right-hand side vector $F_n(\widehat{U}_n)$ and
the iterative process, which  depends on the structure of the
nonlinear function $f(t,u,u')$.
\end{rem}

\section{Error analysis}\label{sec3}

In this section, we  will derive several a-priori error  estimates
for the $hp$-version  $C^1$-CPG method  which are explicit in the
local time steps $k_n$ and  local   approximation orders $r_n$. In
particular, we prove that the $hp$-version  $C^1$-CPG method
exhibits superconvergence at the nodal points of the time partition.

\subsection{Generalized Jacobi polynomials}

Let $\Lambda:=(-1,1)$. We denote by $J^{\alpha,\beta}_n(x), ~\alpha,
\beta>-1$, the  classical Jacobi polynomials, which  are orthogonal
 with respect to the Jacobi weight function $\omega^{\alpha,\beta}(x):=(1-x)^{\alpha}(1+x)^{\beta}$ over
 $(-1,1)$, namely,
\begin{equation}\label{classiJ-ort}
\ds\int_{-1}^{1}J^{\alpha, \beta}_n(x) J^{\alpha,
\beta}_m(x)\omega^{\alpha, \beta}(x)dx=\gamma^{\alpha,
\beta}_n\delta_{m,n},  \quad n,m \ge 0,
\end{equation}
where $\delta_{m,n}$ is the Kronecker symbol and
\begin{equation}\label{gammamn}
\gamma^{\alpha,
\beta}_n=\ds\frac{2^{\alpha+\beta+1}\Gamma(n+\alpha+1)\Gamma(n+\beta+1)
}{(2n+\alpha+\beta+1)\Gamma(n+1)\Gamma(n+\alpha+\beta+1)}.
\end{equation}
It is well-known that $J^{0,0}_n(x):=L_n(x)$ is the Legendre
polynomial of degree $n$, and there hold the orthogonalities (see, e.g., \cite{STW})
\begin{equation}\label{Leg-ort}
\ds\int_{-1}^{1}L_n(x)L_m(x)dx=\ds\frac{2}{2n+1}\delta_{m,n}
\end{equation}
and
\begin{equation}\label{Leg-ort-1}
\ds\int_{-1}^{1}L_n^{(k)}(x)L_m^{(k)}(x)(1-x^2)^k
dx=\ds\frac{2}{2n+1}\ds\frac{(n+k)!}{(n-k)!}\delta_{m,n}.
\end{equation}

For our purpose, we also  introduce the  generalized Jacobi
polynomials $J^{k, l}_n(x)$ with the parameters $k, l \le -1$ being
any negative integers (see \cite{STW})
$$J^{k,l}_n(x):=(1-x)^{-k}(1+x)^{-l} J^{-k,-l}_{n+k+l}(x), \quad   n\ge -(k+l).$$
In particular, we are interested in the   generalized Jacobi
polynomials with negative indexes $(-2,-2)$ and $(-1,-1)$, namely
 $$J^{-2,-2}_n(x)=(1-x)^{2}(1+x)^{2} J^{2,2}_{n-4}(x), \quad n \ge 4$$
and
$$J^{-1,-1}_n(x)=(1-x)(1+x) J^{1,1}_{n-2}(x), \quad n\ge 2.$$

We list some important   properties of  the generalized Jacobi
polynomials in the following lemma, and their proofs are
straightforward (see \cite{STW}).

\begin{lemma}\label{pro-plo}
For any negative integers $k, l \le -1$, there holds the
orthogonality
\begin{equation}\label{J-ort}
\ds\int_{-1}^{1}J^{k,l}_n(x)
J^{k,l}_m(x)\omega^{k,l}(x)dx=\gamma^{-k,-l}_{n+k+l}\delta_{m,n},
\end{equation}
where the constant $\gamma^{-k,-l}_{n+k+l}$ is given in
(\ref{gammamn})  and $\omega^{k,l}(x)=(1-x)^k(1+x)^l$.
 Moreover, there hold
\begin{equation}\label{J-1}
J^{-2,-2}_n(x)=4(n-2)(n-3)\ds\int_{-1}^{x}\Big(\ds\int_{-1}^{\eta}L_{n-2}(s)ds\Big)d\eta,
\end{equation}
\begin{equation}\label{J-2}
J_n^{-1,-1}(\pm 1)=0,\quad   J_n^{-2,-2}(\pm 1)=\partial_x
J_n^{-2,-2}(\pm 1)=0,
\end{equation}
and
\begin{equation}\label{J-4}
\partial_x^2 J_n^{-2,-2}(x)=4(n-2)(n-3)L_{n-2}(x), \quad \partial_x
J_n^{-2,-2}(x)=-2(n-3)J_{n-1}^{-1,-1}(x).
\end{equation}
\end{lemma}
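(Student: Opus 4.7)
The plan is to prove each identity in turn by direct computation from the definition $J^{k,l}_n(x) := (1-x)^{-k}(1+x)^{-l} J^{-k,-l}_{n+k+l}(x)$, reducing everything to known properties of the classical Jacobi and Legendre polynomials.

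For the orthogonality \refe{J-ort}, I would substitute the definition into the integral. The two prefactors $(1-x)^{-k}(1+x)^{-l}$ coming from $J^{k,l}_n(x)$ and $J^{k,l}_m(x)$, combined with the weight $\omega^{k,l}(x) = (1-x)^k(1+x)^l$, collapse to a single weight $\omega^{-k,-l}(x) = (1-x)^{-k}(1+x)^{-l}$. The integral then becomes the classical Jacobi orthogonality \refe{classiJ-ort} applied to $J^{-k,-l}_{n+k+l}$ and $J^{-k,-l}_{m+k+l}$ (which have positive indices $-k,-l>0$), yielding the stated normalization $\gamma^{-k,-l}_{n+k+l}$ at once.

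The boundary identities \refe{J-2} follow by inspection from the factored form: the single zero of $(1-x^2)$ at $x=\pm1$ gives $J^{-1,-1}_n(\pm 1)=0$, while the double zero of $(1-x^2)^2$ in $J^{-2,-2}_n(x) = (1-x)^2(1+x)^2 J^{2,2}_{n-4}(x)$ forces both the function and its first derivative to vanish at $\pm1$ by the product rule. For the derivative formulas \refe{J-4}, I would combine the standard Jacobi differentiation identity $\partial_x J^{\alpha,\beta}_n(x) = \tfrac{1}{2}(n+\alpha+\beta+1)\, J^{\alpha+1,\beta+1}_{n-1}(x)$ with the product-rule expansion of $\partial_x\bigl[(1-x^2)^2 J^{2,2}_{n-4}(x)\bigr]$; after regrouping one obtains $\partial_x J^{-2,-2}_n(x) = -2(n-3)\, J^{-1,-1}_{n-1}(x)$, and differentiating once more (again invoking the derivative rule and using $J^{0,0}_{n-2}=L_{n-2}$) yields $\partial_x^2 J^{-2,-2}_n(x) = 4(n-2)(n-3)\, L_{n-2}(x)$. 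An alternative route is through Rodrigues' formula: $(1-x^2)^2 J^{2,2}_{n-4}$ equals, up to an explicit constant, $\tfrac{d^{n-4}}{dx^{n-4}}\bigl[(1-x^2)^n\bigr]$, and two further derivatives recover the Rodrigues expression for $L_{n-2}$ with the same prefactor.

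Finally, \refe{J-1} is an immediate consequence of \refe{J-4} and \refe{J-2}: integrating the second-derivative identity twice from $-1$ to $x$ and using the vanishing of $J^{-2,-2}_n(-1)$ and $\partial_x J^{-2,-2}_n(-1)$ as integration constants produces the stated iterated-integral representation. The main obstacle is the bookkeeping of constants in \refe{J-4} — the precise prefactor $4(n-2)(n-3)$ depends sensitively on the adopted normalization of the Jacobi polynomials, whether one proceeds via iterated application of the Jacobi derivative rule or via the Rodrigues shortcut. Everything else is routine algebra or a direct appeal to \refe{classiJ-ort}.
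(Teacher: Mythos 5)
Your overall strategy is sound, and it is essentially the canonical one: the paper itself gives no proof of this lemma, stating only that the proofs are straightforward and pointing to \cite{STW}, and what you propose --- collapsing the two prefactors against the weight in \refe{J-ort} so that the integral reduces to the classical orthogonality \refe{classiJ-ort} (note $\delta_{m+k+l,\,n+k+l}=\delta_{m,n}$, and $n,m\ge -(k+l)$ keeps the classical indices admissible), reading \refe{J-2} off the simple and double zeros of $(1-x^2)$ and $(1-x^2)^2$, establishing the derivative identities \refe{J-4}, and then obtaining \refe{J-1} by integrating twice from $-1$ with the boundary values in \refe{J-2} killing the integration constants --- is exactly the computation the cited reference carries out. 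Those parts of your write-up are correct as they stand.

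Two points in your treatment of \refe{J-4} need repair, however. First, your primary route (``product rule plus the Jacobi derivative identity, after regrouping'') is not self-contained: expanding $\partial_x\bigl[(1-x^2)^2J^{2,2}_{n-4}\bigr]$ and applying $\partial_x J^{\alpha,\beta}_m=\tfrac12(m+\alpha+\beta+1)J^{\alpha+1,\beta+1}_{m-1}$ leaves you with $(1-x^2)\bigl[-4x\,J^{2,2}_{n-4}+\tfrac{n-1}{2}(1-x^2)J^{3,3}_{n-5}\bigr]$, and identifying the bracket with $-2(n-3)J^{1,1}_{n-3}$ is a genuine contiguous relation linking three different parameter pairs and degrees, not mere regrouping; as written this step has a gap. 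Second, your Rodrigues shortcut has the wrong exponent: with $\alpha=\beta=2$ and $m=n-4$, Rodrigues' formula gives
\begin{equation*}
J^{-2,-2}_n(x)=(1-x^2)^2J^{2,2}_{n-4}(x)=\frac{(-1)^{n-4}}{2^{n-4}(n-4)!}\,\frac{d^{\,n-4}}{dx^{\,n-4}}\Bigl[(1-x^2)^{n-2}\Bigr],
\end{equation*}
not $\frac{d^{\,n-4}}{dx^{\,n-4}}\bigl[(1-x^2)^{n}\bigr]$, which would be a polynomial of degree $n+4$ rather than $n$. With the corrected exponent the shortcut does work and should be promoted to the main argument: one further derivative together with the Rodrigues expression for $J^{-1,-1}_{n-1}=(1-x^2)J^{1,1}_{n-3}$ (parameters $\alpha=\beta=1$, $m=n-3$) yields $\partial_x J^{-2,-2}_n=-2(n-3)J^{-1,-1}_{n-1}$, and a second derivative compared with the Legendre Rodrigues formula gives the prefactor $(-1)^{2n-6}\,2^{n-2}(n-2)!/\bigl(2^{n-4}(n-4)!\bigr)^{\vphantom{1}}\cdot\frac{1}{1}=4(n-2)(n-3)$, i.e.\ $\partial_x^2J^{-2,-2}_n=4(n-2)(n-3)L_{n-2}$. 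Alternatively, the second identity can be checked without Rodrigues via the Legendre equation: $\partial_x L_m=\tfrac{m+1}{2}J^{1,1}_{m-1}$ and $\partial_x\bigl[(1-x^2)\partial_x L_m\bigr]=-m(m+1)L_m$ give $\partial_x\bigl[(1-x^2)J^{1,1}_{m-1}\bigr]=-2mL_m$, hence $\partial_x J^{-1,-1}_{n-1}=-2(n-2)L_{n-2}$, which combined with the first identity in \refe{J-4} reproduces the stated second-derivative formula.
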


\subsection{An auxiliary  projection and its approximation properties}

For any $u\in H^2(\Lambda)$, we  expand $u''$ into the Legendre
series
\begin{equation}\label{u-exp}
 u''(x)=\ds\sum\limits_{i=0}^{\infty}a_i L_i(x)
\end{equation}
with $a_i=\frac{2i+1}{2}\int_{-1}^1 u''L_idx$. Integrating
(\ref{u-exp}) twice  over $[-1,x]$, then by (\ref{J-1})  we obtain
\begin{eqnarray} \label{u-exp-1}
 u(x)=H_3u(x) +\ds\sum\limits_{i=2}^{\infty} a_i \ds\int_{-1}^{x}\Big(\ds\int_{-1}^{\eta}L_i(s)ds\Big)d\eta = H_3u(x)+\ds\sum\limits_{i=4}^{\infty} b_i
 J^{-2,-2}_i(x),
\end{eqnarray}
where $b_i=\frac{1}{4(i-2)(i-3)} a_{i-2}$ and  $H_3u(x)$ is the
cubic Hermit interpolation of $u$, i.e.,
$$H_3u(x)=\ds\frac{x^3-3x+2}{4}u(-1)+\ds\frac{-x^3+3x+2}{4}u(1)
 +\ds\frac{x^3-x^2-x+1}{4}u'(-1)
 +\ds\frac{x^3+x^2-x-1}{4}u'(1).$$
Obviously, there hold  $H_3 u(\pm 1)=u(\pm 1)$ and $(H_3 u)'(\pm
1)=u'(\pm 1)$.

We now introduce a projector  $\Pi_{\Lambda}^{r}$    which is
essential for our error analysis.

\begin{defn}\label{proj}
  For a function $u\in H^2(\Lambda)$, we define
the projector  $\Pi_{\Lambda}^{r}: H^2(\Lambda) \rightarrow
P_{r}(\Lambda), ~r\ge 2$ by
\begin{eqnarray}\label{def-proj}
\left\{\begin{array}{ll} \ds\int_{\Lambda} (u-\Pi_{\Lambda}^{r}u)''
\varphi dx = 0,\quad
\forall \varphi \in P_{r-2}(\Lambda),\\[3mm]
\Pi_{\Lambda}^{r}u (-1) = u(-1)\label{def-proj-2},\\[3mm]
(\Pi_{\Lambda}^{r}u)' (-1) = u'(-1)\label{def-proj-3}.
\end{array}\right.
\end{eqnarray}
\end{defn}

The following lemma shows that  the operator $\Pi_{\Lambda}^{r}$ is
well defined. In particular, we derive its explicit expression based
on the generalized Jacobi polynomials $J^{-2,-2}_n(x)$ (see
(\ref{I-5})). The proof of this lemma will be given in Appendix
\ref{app-A-2}.

\begin{lemma}\label{wellpose}
  The operator $\Pi_{\Lambda}^{r}$ in Definition \ref{proj} is
  well-defined. In particular,  if $r \ge 3$, there hold $\Pi_{\Lambda}^{r}u (\pm 1)=u(\pm 1)$  and $(\Pi_{\Lambda}^{r}u)'
(\pm 1)=u'(\pm 1)$.
\end{lemma}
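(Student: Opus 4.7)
The plan is to establish well-posedness by a direct dimension-counting argument applied to the homogeneous version of (\ref{def-proj}), and then, for $r \ge 3$, to exhibit an explicit formula built from the expansion \refe{u-exp-1} that immediately yields the claimed endpoint properties at $x = +1$.

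First, the three defining conditions in \refe{def-proj} impose $r+1$ linear constraints on $\Pi_\Lambda^r u \in P_r(\Lambda)$, namely $r-1$ orthogonality conditions (from testing against a basis of $P_{r-2}(\Lambda)$) together with the two endpoint conditions at $x = -1$. Since $\dim P_r(\Lambda) = r+1$, existence and uniqueness are equivalent and both reduce to showing that the homogeneous problem admits only the trivial solution. If $w \in P_r(\Lambda)$ satisfies $\int_\Lambda w'' \varphi\,dx = 0$ for all $\varphi \in P_{r-2}(\Lambda)$, together with $w(-1)=0$ and $w'(-1)=0$, then since $w'' \in P_{r-2}(\Lambda)$ is itself an admissible test function, taking $\varphi = w''$ yields $\int_\Lambda (w'')^2\,dx = 0$, so $w'' \equiv 0$ and $w$ is linear. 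The two endpoint conditions at $x=-1$ then force $w \equiv 0$. This settles well-posedness for all $r \ge 2$.

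Next, for $r \ge 3$, I propose the explicit representation
$$\Pi_\Lambda^r u(x) = H_3 u(x) + \sum_{i=4}^{r} b_i J_i^{-2,-2}(x),$$
with coefficients $b_i = a_{i-2}/[4(i-2)(i-3)]$ as in \refe{u-exp-1}. Since $\deg H_3u = 3 \le r$ and $\deg J_i^{-2,-2} = i \le r$, this polynomial lies in $P_r(\Lambda)$. The identities $J_i^{-2,-2}(\pm 1) = 0$ and $\partial_x J_i^{-2,-2}(\pm 1) = 0$ from \refe{J-2}, combined with the Hermite interpolation properties $H_3 u(\pm 1) = u(\pm 1)$ and $(H_3 u)'(\pm 1) = u'(\pm 1)$, deliver simultaneously the defining endpoint conditions at $x = -1$ and the additional endpoint conditions at $x = +1$ claimed in the lemma. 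For the orthogonality requirement, truncating the series \refe{u-exp-1} gives
$$u - \Pi_\Lambda^r u = \sum_{i=r+1}^{\infty} b_i J_i^{-2,-2},$$
and differentiating twice using \refe{J-4} yields
$$(u-\Pi_\Lambda^r u)''(x) = \sum_{i=r+1}^{\infty} 4(i-2)(i-3)\, b_i L_{i-2}(x),$$
a Legendre series supported on indices $i-2 \ge r-1$. By the Legendre orthogonality \refe{Leg-ort}, this is $L^2(\Lambda)$-orthogonal to $P_{r-2}(\Lambda) = \mathrm{span}\{L_0,\dots,L_{r-2}\}$, verifying the integral condition. Uniqueness from the first step identifies this candidate with $\Pi_\Lambda^r u$.

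I do not anticipate a serious obstacle. The main subtlety is simply to keep well-posedness (which I prove abstractly, so it covers the minimal case $r=2$ where $H_3 u \notin P_r(\Lambda)$) separate from the explicit representation, which requires $r \ge 3$ to fit the full cubic Hermite interpolant inside $P_r(\Lambda)$. The clean double vanishing of $J_i^{-2,-2}$ and its first derivative at both endpoints, recorded in \refe{J-2}, is precisely the feature of the generalized Jacobi polynomials that makes the two extra boundary conditions at $x = +1$ fall out for free.
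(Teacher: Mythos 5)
Your proof is correct, and in its two key steps it takes a genuinely different route from the paper's argument (Appendix A.2). For uniqueness, the paper expands the difference of two candidate solutions in Legendre polynomials, integrates by parts against $\varphi \in P_{r-2}(\Lambda)$ to kill the coefficients $c_0,\dots,c_{r-4}$, and then invokes the four endpoint conditions $(u_1-u_2)(\pm 1)=(u_1-u_2)'(\pm 1)=0$ to eliminate the top four coefficients --- which forces it to first establish the endpoint identities at $x=+1$ and to treat $r=2,3$ by separate explicit formulas. Your observation that a homogeneous solution $w\in P_r(\Lambda)$ has $w''\in P_{r-2}(\Lambda)$, so that $\varphi=w''$ is admissible and yields $w''\equiv 0$ at once, is simpler, needs no information at $x=+1$, and covers all $r\ge 2$ uniformly; combined with the square-system dimension count it settles well-posedness in one stroke. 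Conversely, for the identities at $x=+1$ the paper argues directly from the weak formulation (taking $\varphi=1$ gives $(\Pi_{\Lambda}^{r}u)'(1)=u'(1)$, and $\varphi=x$ with one integration by parts gives $\Pi_{\Lambda}^{r}u(1)=u(1)$, the latter requiring $r\ge 3$ so that $x\in P_{r-2}(\Lambda)$), whereas you read them off the explicit truncation $H_3u+\sum_{i=4}^{r}b_iJ_i^{-2,-2}$ via \refe{J-2}; your route leans on the expansion \refe{u-exp-1} and termwise double differentiation of its tail, which is legitimate here (the Legendre series of $u''\in L^2(\Lambda)$ converges in $L^2$ and is integrated twice --- the same level of rigor the paper itself uses in deriving \refe{u-exp-1} and in proving Lemma \ref{Pi-pro}), while the paper's derivation is independent of the Jacobi machinery. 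Your explicit representation coincides with the paper's existence construction \refe{I-5}, and your verification of the orthogonality condition via \refe{J-4} and \refe{Leg-ort} is sound. Net effect: your argument is logically leaner --- no case split on $r$ and no need to derive the $+1$ conditions before proving uniqueness --- while the paper's variational derivation of the endpoint identities is more intrinsic and, as a marginal extra, its $\varphi=1$ step shows $(\Pi_{\Lambda}^{r}u)'(1)=u'(1)$ already for $r=2$, which your route does not produce.
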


\begin{rem}
From (\ref{I-5}) we find that the projection $\Pi_{\Lambda}^{r} u$
with $r\ge 3$ is a truncation of the expansion of $u$ (see
(\ref{u-exp-1})).  It is worth noting that such projection   has
been studied in \cite{CJZ-21}, where the $h$-version approximation
and superconvergence properties of the projection were analyzed.
\end{rem}

We next state the   $hp$-version approximation properties of the
projector $\Pi_{\Lambda}^{r}$ in the following lemma and give its
proof in Appendix \ref{app-A-3}.

\begin{lemma}\label{Pi-pro}
Let $u\in H^{s_0+1}(\Lambda)$ for some integer $s_0 \ge 1$ and $r\ge
3$, Then we have
\begin{equation}\label{L2-pro}
\|u-\Pi_{\Lambda}^{r}u\|^2_{L^2(\Lambda)}\le
\ds\frac{(r-s)!}{(r+s-2)!}\ds\frac{1}{
(r-2)^4}\|u^{(s+1)}\|^2_{L^2(\Lambda)},
\end{equation}
\begin{equation}\label{H1-pro}
\|(u-\Pi_{\Lambda}^{r}u)'\|^2_{L^2(\Lambda)}\le
\ds\frac{(r-s)!}{(r+s-2)!}\ds\frac{1}{r(r-1)}\|u^{(s+1)}\|^2_{L^2(\Lambda)},
\end{equation}
\begin{equation}\label{H2-pro}
\|(u-\Pi_{\Lambda}^{r}u)''\|^2_{L^2(\Lambda)}\le  \ds\frac{(r-s)!}{(r+s-2)!}\|u^{(s+1)}\|^2_{L^2(\Lambda)}
\end{equation}
for any integer $s$, $1\leq s \leq \min\{r,s_0\}$.
\end{lemma}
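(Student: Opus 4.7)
The plan is to combine the explicit expansion of the error in generalized Jacobi polynomials (coming from the construction of $\Pi_\Lambda^{r}$) with orthogonality relations in suitably weighted $L^2$ spaces. By Lemma~\ref{wellpose} and \eqref{u-exp-1}, the projection $\Pi_\Lambda^{r}u$ is just the truncation of the expansion of $u$ at degree $r$, so that the error $e:=u-\Pi_\Lambda^{r}u$ admits the representation
\begin{equation*}
e(x)=\sum_{i=r+1}^{\infty}b_i J_i^{-2,-2}(x),\qquad b_i=\frac{a_{i-2}}{4(i-2)(i-3)},
\end{equation*}
where $a_j=\frac{2j+1}{2}\int_{-1}^{1}u''L_j\,dx$ are the Legendre coefficients of $u''$. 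The derivative identities \eqref{J-4} then yield
\begin{equation*}
e'(x)=-2\sum_{i\ge r+1}(i-3)b_i\,J^{-1,-1}_{i-1}(x),\qquad e''(x)=\sum_{j\ge r-1}a_j\,L_j(x).
\end{equation*}

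I would first establish \eqref{H2-pro}, from which the other two bounds follow almost mechanically. Since $e''$ is precisely the Legendre truncation error of $u''$ beyond degree $r-2$, \eqref{Leg-ort} gives $\|e''\|_{L^2}^2=\sum_{j\ge r-1}\frac{2}{2j+1}a_j^2$. Using the orthogonality \eqref{Leg-ort-1} of $(s-1)$-st derivatives of Legendre polynomials and $(1-x^2)^{s-1}\le 1$ on $\Lambda$, one obtains
\begin{equation*}
\sum_{j\ge r-1}\frac{2a_j^2}{2j+1}\,\frac{(j+s-1)!}{(j-s+1)!}\le \|u^{(s+1)}\|^2_{L^2(\Lambda)}.
\end{equation*}
Since the ratio $(j-s+1)!/(j+s-1)!$ is maximized at $j=r-1$ over $j\ge r-1$ and equals $(r-s)!/(r+s-2)!$ there, factoring this constant out of the sum produces
\begin{equation*}
\|e''\|^2_{L^2}\le \frac{(r-s)!}{(r+s-2)!}\sum_{j\ge r-1}\frac{2a_j^2}{2j+1}\,\frac{(j+s-1)!}{(j-s+1)!}\le \frac{(r-s)!}{(r+s-2)!}\|u^{(s+1)}\|^2_{L^2},
\end{equation*}
which is \eqref{H2-pro}.

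For \eqref{H1-pro} and \eqref{L2-pro} the key observation is that the weights $\omega^{-1,-1}=(1-x^2)^{-1}$ and $\omega^{-2,-2}=(1-x^2)^{-2}$ are both at least $1$ on $\Lambda$, so the unweighted $L^2$ norm is dominated by the corresponding weighted norm in which the generalized Jacobi expansion is orthogonal. Applying \eqref{J-ort} with $(k,l)=(-1,-1)$ and the closed form of $\gamma^{1,1}_{i-3}$ from \eqref{gammamn} telescopes to
\begin{equation*}
\|e'\|^2_{L^2}\le \sum_{i\ge r+1}4(i-3)^2 b_i^2\,\gamma^{1,1}_{i-3}=\sum_{j\ge r-1}\frac{1}{j(j+1)}\frac{2a_j^2}{2j+1}\le \frac{1}{r(r-1)}\|e''\|^2_{L^2}.
\end{equation*}
An analogous calculation with $(k,l)=(-2,-2)$ and $\gamma^{2,2}_{i-4}$ gives
\begin{equation*}
\|e\|^2_{L^2}\le \sum_{i\ge r+1}b_i^2\,\gamma^{2,2}_{i-4}=\sum_{j\ge r-1}\frac{1}{j(j-1)(j+1)(j+2)}\frac{2a_j^2}{2j+1}\le \frac{1}{(r-2)^4}\|e''\|^2_{L^2},
\end{equation*}
where the last inequality uses that each of $j-1,j,j+1,j+2$ is at least $r-2$ for $j\ge r-1$ and $r\ge 3$. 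Combining these two inequalities with the $H^2$ estimate yields \eqref{H1-pro} and \eqref{L2-pro}.

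The main technical hurdle is the sharp Legendre truncation estimate \eqref{H2-pro}: while the overall structure is classical, producing exactly the factorial prefactor $(r-s)!/(r+s-2)!$ requires careful bookkeeping of how the orthogonality weights $(1-x^2)^{s-1}$ and the factorial ratios from \eqref{Leg-ort-1} interact. Once \eqref{H2-pro} is in place, the weighted-orthogonality trick (exploiting $\omega^{-k,-k}\ge 1$ on $\Lambda$) delivers the $H^1$ and $L^2$ bounds without demanding any additional regularity on $u$ beyond $u^{(s+1)}\in L^2(\Lambda)$.
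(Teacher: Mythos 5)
Your proposal is correct and follows essentially the same route as the paper's Appendix~A.3 proof: the truncated generalized Jacobi expansion of the error, the derivative identities \eqref{J-4}, the weighted orthogonality \eqref{J-ort} combined with $\omega^{-k,-k}\ge 1$ on $\Lambda$, and the weighted-norm identity for $u^{(s+1)}$ via \eqref{Leg-ort-1} with the extremal factorial ratio factored out at $j=r-1$. The only difference is organizational --- you prove \eqref{H2-pro} first and deduce \eqref{H1-pro}, \eqref{L2-pro} through the comparisons $\|e'\|^2\le \frac{1}{r(r-1)}\|e''\|^2$ and $\|e\|^2\le \frac{1}{(r-2)^4}\|e''\|^2$, whereas the paper bounds each norm directly against $\|u^{(s+1)}\|_{L^2_{\omega^{s-1,s-1}}}$ --- but the underlying coefficient computations (e.g.\ $\gamma^{1,1}_{i-3}$, $\gamma^{2,2}_{i-4}$) are identical, and your $L^2$ constant is the same as the paper's after it relaxes $\frac{1}{(r+1)r(r-1)(r-2)}$ to $\frac{1}{(r-2)^4}$.
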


\begin{rem}
Using Stirling's formula, it is easy to verify that, for fixed $s$,
there hold
$$\|u-\Pi_{\Lambda}^{r}u\|_{L^2(\Lambda)} \le C r^{-(s+1)} \|u^{(s+1)}\|_{L^2(\Lambda)},$$
$$\|(u-\Pi_{\Lambda}^{r}u)'\|_{L^2(\Lambda)} \le C r^{-s}\|u^{(s+1)}\|_{L^2(\Lambda)},$$
$$\|(u-\Pi_{\Lambda}^{r}u)''\|_{L^2(\Lambda)} \le C r^{-(s-1)}\|u^{(s+1)}\|_{L^2(\Lambda)}$$
 as $r\rightarrow \infty$, which imply that the estimates in Theorem \ref{Pi-pro} are optimal in terms
of the polynomial degree $r$. Here, the constants $C$ are
independent of $r$.
\end{rem}

On an arbitrary interval $J:=(a,b)$ with length $h=b-a$, we  define
the projector $\Pi_J^{r}$ via the  linear map $\mathcal{M}$ as
\begin{equation}\label{scal-proj}
\Pi_{J}^{r}u =  [ \Pi_{\Lambda}^{r} (u \circ \mathcal{M}) ] \circ
\mathcal{M}^{-1},
\end{equation}
where $\mathcal{M}: \Lambda \rightarrow J$ is
the linear transformation $x \mapsto t=\frac{a+b+hx}{2}$.

 By  scaling
to an arbitrary interval $J$ and interpolating between Sobolev
spaces of integer-order, we obtain  from Lemma \ref{Pi-pro} the
following approximation results immediately.

\begin{col}\label{scal}
Let $J:=(a,b), ~h=b-a$, $r\ge 3$ and  $u\in H^{s_{0}+1}(J)$ with
$s_0\geq 1$. Then we have
\begin{equation}\label{scal-1}
\|u-\Pi_{J}^{r}u\|^2_{L^2(J)}\le C
\left(\ds\frac{h}{2}\right)^{2s+2}
\ds\frac{\Gamma(r-s+1)}{\Gamma(r+s-1)}\ds\frac{1}{(r-2)^4}\|u\|^2_{H^{s+1}(J)},
\end{equation}
\begin{equation}\label{scal-2}
\|(u-\Pi_{J}^{r}u)'\|^2_{L^2(J)}\le C
\left(\ds\frac{h}{2}\right)^{2s}
\ds\frac{\Gamma(r-s+1)}{\Gamma(r+s-1)}\ds\frac{1}{r(r-1)}\|u\|^2_{H^{s+1}(J)},
\end{equation}
\begin{equation}\label{scal-3}
\|(u-\Pi_{J}^{r}u)''\|^2_{L^2(J)}\le C
\left(\ds\frac{h}{2}\right)^{2s-2}
\ds\frac{\Gamma(r-s+1)}{\Gamma(r+s-1)}\|u\|^2_{H^{s+1}(J)}
\end{equation}
for any real $s$,  $1\leq s \leq \min\{r ,s_0\}$. Here,
$\Gamma(\cdot)$ is the usual gamma function.
\end{col}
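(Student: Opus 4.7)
The plan is a standard two-step argument. First, I would pull back $\Pi_J^{r}$ to the reference projector $\Pi_\Lambda^{r}$ via the affine map $\mathcal{M}$, reducing the corollary at integer $s$ directly to \refl{Pi-pro}. Second, operator interpolation in the Sobolev index would extend the estimates to arbitrary real $s \in [1,\min\{r,s_0\}]$.

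For the scaling step, set $\hat u := u\circ \mathcal{M}$, so that $(\Pi_J^{r}u)\circ \mathcal{M} = \Pi_\Lambda^{r}\hat u$ by (\ref{scal-proj}) and therefore $(u - \Pi_J^{r}u)\circ \mathcal{M} = \hat u - \Pi_\Lambda^{r}\hat u$. The chain rule $\partial_x^k\hat v = (h/2)^k(\partial_t^k v)\circ\mathcal{M}$ together with the change of variables $dt = (h/2)\,dx$ yields the familiar identity
\begin{equation*}
\|\hat v^{(k)}\|_{L^2(\Lambda)}^2 = (h/2)^{2k-1}\|v^{(k)}\|_{L^2(J)}^2, \qquad k\ge 0.
\end{equation*}
Applying this with $k=0,1,2$ to $v = u-\Pi_J^{r}u$ and with $k=s+1$ to $v = u$ on the right-hand side of \refl{Pi-pro}, the integer-$s$ versions of (\ref{scal-1})--(\ref{scal-3}) drop out after rewriting $(r-s)!/(r+s-2)! = \Gamma(r-s+1)/\Gamma(r+s-1)$ and estimating $\|u^{(s+1)}\|_{L^2(J)}\le \|u\|_{H^{s+1}(J)}$.

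To cover real $s$, I would view $E_r := \mathrm{id}-\Pi_J^{r}$ as a bounded linear operator $H^{s+1}(J)\to H^k(J)$ for $k\in\{0,1,2\}$, with endpoint bounds at the integers $s_1 := \lfloor s\rfloor$ and $s_2 := s_1+1$ supplied by the previous step. The $K$-method of interpolation (the very construction of the non-integer-order spaces used in the notation section) then delivers an intermediate bound; the $h$-power $(h/2)^{2s+2-2k}$ is log-affine in $s$ and passes through unchanged, while the $r$-dependent factor $\Gamma(r-s+1)/\Gamma(r+s-1)$ is bounded, up to an absolute constant, by the geometric mean of its endpoint values. The main (mild) technical point is precisely this last comparison: log-convexity of $\Gamma$ controls $\Gamma(r-s+1)$ in the right direction but works in the wrong direction for the denominator $\Gamma(r+s-1)$, and the mismatch must be absorbed via the standard asymptotic $\Gamma(x+\theta)/\Gamma(x) = x^\theta(1+O(1/x))$, which bounds the two expressions by each other up to a factor independent of $r$, $s\in[1,r]$ and $\theta\in[0,1]$. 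All such absolute factors are then rolled into the generic constant $C$ appearing in (\ref{scal-1})--(\ref{scal-3}).
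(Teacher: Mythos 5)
Your proposal is correct and is precisely the paper's own route: the paper proves this corollary in a single sentence (``by scaling to an arbitrary interval $J$ and interpolating between Sobolev spaces of integer-order''), which is exactly your two steps, and your scaling identity $\|\hat v^{(k)}\|^2_{L^2(\Lambda)}=(h/2)^{2k-1}\|v^{(k)}\|^2_{L^2(J)}$ together with the endpoint reduction to \refl{Pi-pro} is right. One small correction to your technical remark: the convexity directions are reversed --- log-convexity of $\Gamma$ gives $\Gamma(r+s-1)\le \Gamma(r+s_1-1)^{1-\theta}\,\Gamma(r+s_2-1)^{\theta}$, which is the \emph{favorable} direction for the denominator (it sits downstairs), whereas it is the numerator where convexity yields $\Gamma(r-s+1)\le \Gamma(r-s_1+1)^{1-\theta}\,\Gamma(r-s_2+1)^{\theta}$, the wrong way for your bound. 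Since the Wendel-type comparison $\Gamma(x+\theta)/\Gamma(x)\asymp x^{\theta}$ that you invoke is two-sided and uniform for $x\ge 1$ (which holds here because $s_2=\lceil s\rceil\le\min\{r,s_0\}$, so $r-s_2+1\ge 1$), the mismatch is absorbed exactly as you say and the argument is unaffected.
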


Given  an arbitrary partition $\cal T_h$ of $(0,T)$ with $N$
subintervals $I_n, ~1\le n\le N$. For any $u\in H^2(I)$, we can now
 define a piecewise polynomial $\Pi^{\bf{r}}u$   by
\begin{equation}\label{pir}
\Pi^{{\bf{r}}}u |_{I_n}= \Pi_{I_n}^{r_n}u,\quad   1\le n\le N,
\end{equation} where
$r_n \ge 2$  and $\Pi_{I_n}^{r_n}u$ is defined as (\ref{scal-proj}).
Due to Definition \ref{proj} and Lemma \ref{wellpose}, if $r_n\ge
3$,  then
\begin{equation}\label{C1-con}
\Pi^{\bf{r}}u (t_{n}) = u(t_{n}), \quad (\Pi^{\bf{r}}u)' (t_{n}) =
u'(t_{n}), \quad 0\le n\le N.
\end{equation}
Therefore, we have $\Pi^{\bf{r}}u \in S^{{\bf r}, 2 }(\cal T_h)$.
Moreover, from (\ref{def-proj}) we find
\begin{equation} \label{h1-orth}
\ds\int_{I_n} (u-\Pi^{\bf{r}}u)'' \varphi dt=0,\quad  \forall
\varphi \in P_{r_n-2}(I_n).
\end{equation}

As a direct consequence of   Corollary \ref{scal},  we   have the
following  results.

\begin{lemma}\label{Iu-pro}
Let $\cal T_h$ be an arbitrary partition of $I=(0,T)$. Assume that
$u\in H^{2}(I)$ satisfies $u|_{I_n}\in H^{s_{0,n}+1}$ for
$s_{0,n}\geq 1$ and $r_n \ge 3$, then we have
\begin{equation}\label{Iu-L2}
\|u-\Pi^{\bf{r}}u\|^2_{L^2(I)}\le C \ds\sum_{n=1}^N
\Big(\ds\frac{k_n}{2}\Big)^{2s_n+2}
\ds\frac{\Gamma(r_n-s_n+1)}{\Gamma(r_n+s_n-1)}\ds\frac{1}{(r_n-2)^4}\|u\|^2_{H^{s_n+1}(I_n)},
\end{equation}
\begin{equation}\label{Iu-H1}
\|(u-\Pi^{\bf{r}}u)'\|^2_{L^2(I)}\le C \ds\sum_{n=1}^N
\Big(\ds\frac{k_n}{2}\Big)^{2s_n}
\ds\frac{\Gamma(r_n-s_n+1)}{\Gamma(r_n+s_n-1)}\ds\frac{1}{r_n(r_n-1)}\|u\|^2_{H^{s_n+1}(I_n)},
\end{equation}
\begin{equation}\label{Iu-H2}
\|(u-\Pi^{\bf{r}}u)''\|^2_{L^2(I)}\le C \ds\sum_{n=1}^N
\Big(\ds\frac{k_n}{2}\Big)^{2s_n-2}
\ds\frac{\Gamma(r_n-s_n+1)}{\Gamma(r_n+s_n-1)}\|u\|^2_{H^{s_n+1}(I_n)}
\end{equation}
for any real $s_n$,  $1\leq s_n \leq \min\{r_n, s_{0,n}\}$.

Moreover, if $u\in H^2(I)$  satisfies $u|_{I_n} \in
W^{s_{0,n}+1,\infty}(I_n)$ for $s_{0,n} \ge 1$,  then we have
\begin{eqnarray}\label{Iu-infty-1}
\|u-\Pi^{\bf{r}}u\|^2_{L^\infty(I_n)}\le C
\Big(\frac{k_n}{2}\Big)^{2s_n+2} \ds\frac{
\Gamma(r_n-s_n+1)}{\Gamma(r_n+s_n-1)}\ds\frac{1}{(r_n-2)^3}\|u\|^2_{W^{s_n+1,\infty}(I_n)},
\end{eqnarray}
for any real  $s_n$, $1\le s_n \le \min\{r_n, s_{0,n}\}$.
\end{lemma}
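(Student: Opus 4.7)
The plan is to derive the three $L^2$-type estimates \eqref{Iu-L2}--\eqref{Iu-H2} by summing the corresponding local bounds from Corollary~\ref{scal}, and to obtain the $L^\infty$-estimate \eqref{Iu-infty-1} via a multiplicative Sobolev inequality that exploits the nodal identity \eqref{C1-con}.

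For \eqref{Iu-L2}--\eqref{Iu-H2}, since $\Pi^{{\bf r}}u|_{I_n}=\Pi_{I_n}^{r_n}u$ by \eqref{pir}, the global norms decompose as
$\|u-\Pi^{{\bf r}}u\|_{L^2(I)}^2=\sum_{n=1}^N \|u-\Pi_{I_n}^{r_n}u\|_{L^2(I_n)}^2$, and similarly for the first and second derivatives. Applying \eqref{scal-1}, \eqref{scal-2}, and \eqref{scal-3} on each $I_n$ with $h=k_n$, $r=r_n$, $s=s_n$ and summing over $n$ immediately yields the three bounds.

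The new content lies in \eqref{Iu-infty-1}. Setting $v:=u-\Pi^{{\bf r}}u$, the nodal identity \eqref{C1-con} gives $v(t_{n-1})=0$ (the right-endpoint condition $v(t_n)=0$ also holds but is not needed here), so for $t\in I_n$ we have $v(t)^2=2\int_{t_{n-1}}^{t} v(s)v'(s)\,ds$, from which the Sobolev multiplicative inequality
$$\|v\|_{L^\infty(I_n)}^2\le 2\|v\|_{L^2(I_n)}\|v'\|_{L^2(I_n)}$$
follows. Inserting \eqref{scal-1} and \eqref{scal-2}, and using the one-dimensional embedding $\|u\|_{H^{s_n+1}(I_n)}^2\le k_n\|u\|_{W^{s_n+1,\infty}(I_n)}^2$ in one of the two factors, produces
$$\|v\|_{L^\infty(I_n)}^2\le C\Bigl(\frac{k_n}{2}\Bigr)^{2s_n+2}\frac{\Gamma(r_n-s_n+1)}{\Gamma(r_n+s_n-1)}\frac{1}{(r_n-2)^2\sqrt{r_n(r_n-1)}}\|u\|_{W^{s_n+1,\infty}(I_n)}^2.$$
The elementary inequality $(r_n-2)^2\le r_n(r_n-1)$ for $r_n\ge 2$ (which one verifies from $r_n(r_n-1)-(r_n-2)^2=3r_n-4>0$) then bounds the degree factor by $(r_n-2)^{-3}$, and \eqref{Iu-infty-1} follows.

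The only slightly subtle point I anticipate is the choice of the multiplicative Sobolev inequality over the simpler Poincar\'e bound $\|v\|_{L^\infty(I_n)}^2\le k_n\|v'\|_{L^2(I_n)}^2$ or a polynomial inverse inequality: either of those weaker tools would only yield $(r_n-2)^{-2}$ in place of the sharp $(r_n-2)^{-3}$. Once the multiplicative bound is used, the calculation is essentially routine bookkeeping.
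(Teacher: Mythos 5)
Your proposal is correct and follows essentially the same route as the paper: the $L^2$-type bounds by summing Corollary~\ref{scal} over the subintervals, and the $L^\infty$-bound from the nodal vanishing of $u-\Pi^{\bf r}u$ (the paper uses both endpoints, i.e.\ $u-\Pi^{\bf{r}}u\in H^1_0(I_n)$, with the multiplicative Sobolev inequality $\|v\|^2_{L^\infty}\le\|v\|_{L^2}\|v'\|_{L^2}$, whereas you use only the left endpoint at the harmless cost of a factor $2$) combined with \eqref{scal-1} and \eqref{scal-2}. Your explicit bookkeeping of the extra step $\|u\|^2_{H^{s_n+1}(I_n)}\le Ck_n\|u\|^2_{W^{s_n+1,\infty}(I_n)}$ and the comparison $(r_n-2)^2\le r_n(r_n-1)$ is exactly what the paper's terse ``yields \eqref{Iu-infty-1}'' implicitly requires to produce the stated powers $(k_n/2)^{2s_n+2}$ and $(r_n-2)^{-3}$.
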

\begin{proof}
The estimates (\ref{Iu-L2})-(\ref{Iu-H2}) are direct consequence of
Corollary \ref{scal}.
 Since $(u-\Pi^{\bf{r}}u)(t_{n-1})=(u-\Pi^{\bf{r}}u)(t_{n})=0$, we have $u-\Pi^{\bf{r}}u\in H^1_0(I_n)$.
 Hence, using the Sobolev inequality  \cite{ILLZ}
$$\|v\|^2_{L^{\infty}(a,b)}\leq \|v\|_{L^2(a,b)}\|v'\|_{L^2(a,b)},\quad \forall v\in H^1_0(a,b),$$
and the estimates (\ref{scal-1}) and (\ref{scal-2}) yields
(\ref{Iu-infty-1}).
\end{proof}

\subsection{Abstract error bounds}

Let $u$ be the exact solution of (\ref{IVP}) and $U$ be the
  $C^1$-CPG solution given by  (\ref{C1CPG-FEM}). We split
the error into   two parts:
\begin{equation}\label{err-split}
e:=u-U= \eta + \xi,
\end{equation}
where $\eta :=u-\Pi^{\bf{r}}u$ and $\xi :=\Pi^{\bf{r}}u-U$.

 Since
Lemma \ref{Iu-pro} can be used to bound $\eta$,  it remains to
consider $\xi$. To this end, we need  the following discrete
Gronwall inequality (see, e.g., \cite{Bru-1}).

\begin{lemma}\label{gronwall}
Let $\{ a_n\}_{n=1}^N$ and  $\{ b_n\}_{n=1}^N$ be two sequences of
nonnegative real numbers with $b_1\le b_2 \le \cdots \le b_N$.
Assume that there exist a   constant $C>0$ and weights $w_i
>0, ~1\le i \le N-1$ such that
$$ a_1 \le b_1, \quad a_n \le b_n +C \sum_{i=1}^{n-1} w_i a_i, \quad
2 \le n \le N.$$ Then
$$ a_n \le b_n \exp(C \sum_{i=1}^{n-1} w_i), \quad 1\le n\le N.$$
\end{lemma}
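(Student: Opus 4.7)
The plan is to prove this classical discrete Gronwall inequality by induction on $n$. The base case $n=1$ is immediate since $a_1\le b_1=b_1\exp(0)$. For the inductive step, I would assume $a_j\le b_j\exp(C\sum_{i=1}^{j-1}w_i)$ for all $1\le j\le n-1$, substitute into the hypothesis
$$a_n\le b_n+C\sum_{i=1}^{n-1}w_ia_i,$$
and then exploit the monotonicity $b_i\le b_n$ to pull $b_n$ out and reduce matters to a purely real-analytic inequality involving the weights only.

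Concretely, setting $S_k:=C\sum_{i=1}^{k}w_i$ with $S_0=0$, the reduction shows that it suffices to verify
$$1+\sum_{i=1}^{n-1}(S_i-S_{i-1})\exp(S_{i-1})\le \exp(S_{n-1}).$$
The key step — and really the only substantive ingredient of the proof — is the elementary convexity estimate $e^x-1\ge x$ for $x\ge 0$, which yields
$$(S_i-S_{i-1})\exp(S_{i-1})\le \exp(S_i)-\exp(S_{i-1}).$$
Summing this telescoping bound from $i=1$ to $n-1$ gives exactly $\exp(S_{n-1})-1$, which closes the induction.

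I do not expect any real obstacle: the only thing to be careful about is the use of monotonicity of $\{b_n\}$ (otherwise one would only get $b_j$ inside the sum rather than $b_n$ outside), and the correct handling of the empty sum in the base case. The structure is standard, so I would present it as a short induction followed by the one-line telescoping estimate, then reference it later in Section~\ref{sec3} when bounding the perturbation term $\xi=\Pi^{\bf r}u-U$ step by step across the subintervals $I_n$.
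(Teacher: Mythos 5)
Your proof is correct. Note, however, that the paper contains no proof of this lemma to compare against: it is quoted as a known result with a pointer to Brunner's monograph \cite{Bru-1}, so your induction is a genuine (and welcome) filling-in rather than an alternative route. The argument itself is sound: with $S_k = C\sum_{i=1}^{k} w_i$ and the strong inductive hypothesis $a_j \le b_j e^{S_{j-1}}$ for $j \le n-1$, the monotonicity $b_i \le b_n$ lets you factor $b_n$ out of the sum, and the convexity bound $e^{x}-1 \ge x$ applied with $x = S_i - S_{i-1} \ge 0$ gives
\begin{equation*}
(S_i - S_{i-1})\,e^{S_{i-1}} \le e^{S_i} - e^{S_{i-1}},
\end{equation*}
which telescopes to exactly $e^{S_{n-1}} - 1$ and closes the induction. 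You are also right that the monotonicity of $\{b_n\}$ is not cosmetic: without it the conclusion fails outright (take $C = w_1 = 1$, $a_1 = b_1 = 1$, $b_2 = 0$, $a_2 = 1$; the hypothesis $a_2 \le b_2 + a_1$ holds but $a_2 \le b_2\,e$ does not), so your insistence on using $b_i \le b_n$ to pull the factor out of the sum is the correct and necessary move.
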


We   show that $\xi$ can be bounded by $\eta$.

\begin{lemma}\label{main-lemma}
 For $k_n$ sufficiently small and $r_n \ge 3$, there hold
\begin{equation}  \label{xi-err-main-H1}
\|\xi\|_{H^1(0, t_n)} \le C  \|\eta\|_{H^1(0, t_{n})},
\end{equation}
\begin{equation}  \label{xi-err-main-H2}
 \|\xi\|_{H^2(0, t_n)} \leq C \|\eta\|_{H^1(0, t_n)},
\end{equation}
for $1\le n \le N$, where $\xi$ and $\eta$ are
defined by the splitting (\ref{err-split}), and the constant $C>0$
solely depends on $L$ and $t_n$.
\end{lemma}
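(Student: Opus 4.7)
My plan is to derive a local-in-time estimate for $\xi$ on each subinterval from a Galerkin-type orthogonality, and then stitch these local estimates together with a discrete Gronwall argument. Since $u$ solves $u'' = f(t,u,u')$ pointwise and $U$ satisfies (\ref{C1CPG-FEM-1}), the orthogonality (\ref{h1-orth}) yields, for every $\varphi \in P_{r_n-2}(I_n)$,
\begin{equation*}
\int_{I_n} \xi'' \varphi \, dt = \int_{I_n} \bigl[ f(t,u,u') - f(t,U,U') \bigr] \varphi \, dt.
\end{equation*}
Since $\xi|_{I_n} \in P_{r_n}(I_n)$, its second derivative lies in $P_{r_n-2}(I_n)$ and is itself an admissible test function. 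Choosing $\varphi = \xi''$, applying the Lipschitz hypothesis (\ref{Lip-con}) with $u - U = \xi + \eta$, and then Cauchy--Schwarz gives
\begin{equation*}
\|\xi''\|_{L^2(I_n)} \le L \bigl( \|\xi\|_{L^2(I_n)} + \|\xi'\|_{L^2(I_n)} + \|\eta\|_{L^2(I_n)} + \|\eta'\|_{L^2(I_n)} \bigr).
\end{equation*}

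Since $\xi \in S^{{\bf r},2}(\mathcal{T}_h) \subset C^1([0,T])$ and both $U$ and $\Pi^{\bf r}u$ interpolate the data at $t=0$, so that $\xi(0) = \xi'(0) = 0$, repeated use of the fundamental theorem of calculus on $I_n$ yields, via Cauchy--Schwarz,
\begin{equation*}
\|\xi\|_{L^2(I_n)} + \|\xi'\|_{L^2(I_n)} \le C \sqrt{k_n}\, \bigl( |\xi(t_{n-1})| + |\xi'(t_{n-1})| \bigr) + C k_n \|\xi''\|_{L^2(I_n)}.
\end{equation*}
Inserting this into the previous bound and, for $k_n$ sufficiently small, absorbing the $\|\xi''\|_{L^2(I_n)}$ term on the right into the left produces the local estimate
\begin{equation*}
\|\xi\|_{H^2(I_n)}^2 \le C k_n \bigl( |\xi(t_{n-1})|^2 + |\xi'(t_{n-1})|^2 \bigr) + C \|\eta\|_{H^1(I_n)}^2.
\end{equation*}

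To dispose of the boundary values at $t_{n-1}$, I use $\xi(0) = \xi'(0) = 0$ once more, writing $\xi(t_{n-1}) = \int_0^{t_{n-1}} \xi'(s)\,ds$ and $\xi'(t_{n-1}) = \int_0^{t_{n-1}} \xi''(s)\,ds$, which via Cauchy--Schwarz gives $|\xi(t_{n-1})|^2 + |\xi'(t_{n-1})|^2 \le t_{n-1} \|\xi\|_{H^2(0,t_{n-1})}^2$. Adding $\|\xi\|_{H^2(0,t_{n-1})}^2$ to both sides of the local estimate and telescoping over $m = 1,\ldots,n$ produces
\begin{equation*}
\|\xi\|_{H^2(0,t_n)}^2 \le C \|\eta\|_{H^1(0,t_n)}^2 + C \sum_{m=1}^{n-1} k_{m+1} t_m \|\xi\|_{H^2(0,t_m)}^2.
\end{equation*}
Applying the discrete Gronwall inequality (Lemma \ref{gronwall}) with weights $w_m = k_{m+1} t_m$, whose sum is bounded by $t_n^2$, yields $\|\xi\|_{H^2(0,t_n)} \le C \|\eta\|_{H^1(0,t_n)}$, which is (\ref{xi-err-main-H2}); inequality (\ref{xi-err-main-H1}) is then immediate from $\|\xi\|_{H^1} \le \|\xi\|_{H^2}$.

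The step requiring the most care is the absorption: the nested fundamental-theorem bounds yield a prefactor of order $L k_n$ in front of $\|\xi''\|_{L^2(I_n)}$, and one must verify that the precise smallness of $k_n$ needed here is compatible with the mesh condition (\ref{mesh-cond}) already assumed in Lemma \ref{exi-uni}, uniformly in $n$, so that the resulting Gronwall constant depends only on $L$ and $t_n$ and is independent of the approximation degrees $r_n$.
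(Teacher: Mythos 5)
Your proof is correct, and it takes a genuinely different---and shorter---route than the paper's. You test the error equation only with $\varphi=\xi''$, which is admissible since $\xi''|_{I_n}\in P_{r_n-2}(I_n)$, so you never need the $L^2$-projection $\pi^{r_n-2}$ at all; you then control $\|\xi\|_{L^2(I_n)}+\|\xi'\|_{L^2(I_n)}$ by the nodal values at $t_{n-1}$ plus $k_n\|\xi''\|_{L^2(I_n)}$ via the fundamental theorem of calculus, absorb the $\|\xi''\|_{L^2(I_n)}$ term for $k_n\lesssim 1/L$, convert the nodal values into $t_{n-1}\|\xi\|^2_{H^2(0,t_{n-1})}$ using $\xi(0)=\xi'(0)=0$ and the global $C^1$-continuity of $\xi$ (exactly where $r_n\ge 3$ enters, paralleling the paper's step (\ref{xi-13})), and close with a single application of Lemma \ref{gronwall} to $a_n=\|\xi\|^2_{H^2(0,t_n)}$, after which (\ref{xi-err-main-H1}) falls out of (\ref{xi-err-main-H2}). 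The paper proceeds in the opposite order: it first establishes the $H^1$ bound by testing (\ref{xi-1}) with the projected functions $\pi^{r_n-2}\xi'$ and $\pi^{r_n-2}((t_{n-1}-t)\xi')$, using the $L^2$-stability of $\pi^{r_n-2}$ to derive a recursion for $|\xi'(t_n)|^2$, and applies the discrete Gronwall inequality twice (once for the nodal derivative values in (\ref{xi-8}), once for $\|\xi\|^2_{H^1(I_n)}/k_n$), and only afterwards tests with $\xi''$ to lift $H^1$ to $H^2$. Your absorption trick eliminates the weighted/projected test functions and one full Gronwall pass; what the paper's longer route yields is explicit smallness thresholds (e.g., $2\sqrt{2}Lk_n\le\gamma<1$) and intermediate nodal estimates, but since none of those internals are reused elsewhere (the superconvergence and $L^\infty$ results are proved independently), your argument is a valid drop-in replacement. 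Your closing concern about compatibility with (\ref{mesh-cond}) is a non-issue: the lemma merely hypothesizes $k_n$ sufficiently small, and the paper's own proof likewise imposes smallness beyond (\ref{mesh-cond}); your constants are manifestly independent of $r_n$ (the test function $\xi''$ and the FTC bounds never see the degree), and the factor of order $1+k_n$ in the FTC step is harmless once $k_n\le 1$, so the stated dependence of $C$ on $L$ and $t_n$ alone is preserved.
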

\begin{proof}
In view of (\ref{IVP}) and (\ref{C1CPG-FEM-1}), we have
\begin{eqnarray}\label{orthog}
\ds\int_{I_n} e''\varphi dt = \ds\int_{I_n} \left(f(t, u, u')-f(t,
U, U')\right) \varphi dt, \quad \forall \varphi \in P_{r_n-2}(I_n).
\end{eqnarray}
Then,   by  (\ref{h1-orth}) we get
\begin{equation}  \label{xi-1}
\ds\int_{I_n} \xi'' \varphi dt  = \ds\int_{I_n} \left(f(t, u,
u')-f(t, U, U')\right) \varphi dt, \quad \forall \varphi \in
P_{r_n-2}(I_n).
\end{equation}

For any $v\in L^2(I_n)$, we denote by $\pi^{r_n-2}v\in
P_{r_n-2}(I_n)$ the $L^2$-projection of $v$ onto $P_{r_n-2}(I_n)$
with $r_n \ge 2$, namely,
\begin{equation}\label{L2-projc}
\ds\int_{I_n} (v- \pi^{r_n-2}v)\varphi dt =0, \quad \forall \varphi
\in P_{r_n-2}(I_n).
\end{equation}
Then  for any $v\in H^{s+1}(I_n)$ with $s\ge 0$, there holds
(cf. \cite{Schwab})
\begin{equation}\label{L2-projc-appro}
\|v- \pi^{r_n-2}v\|_{L^2(I_n)} \le C
\frac{k_n^{\min\{r_n-2,s\}+1}}{r_n^{s+1}}\|v\|_{H^{s+1}(I_n)} .
\end{equation}

 Selecting $\varphi=\pi^{r_n-2} \xi'$ in
(\ref{xi-1}) and  using (\ref{Lip-con}),  yields
\begin{equation*}  \label{xi-2}
\begin{aligned}
\ds\int_{I_n} \xi''  \xi' dt = & \ds\int_{I_n}\left(f(t, u, u')-f(t,
U, U')\right) \pi^{r_n-2} \xi' dt \\ \le& L\left\{\ds\int_{I_n}
\left(|u-U|+|u'-U'|\right)^2dt\right\}^{\frac{1}{2}}
\left\{\ds\int_{I_n} |\pi^{r_n-2} \xi'|^2dt\right\}^{\frac{1}{2}}\\
\leq& \sqrt{2}L\left\{\ds\int_{I_n}
\left(|u-U|^2+|u'-U'|^2\right)dt\right\}^{\frac{1}{2}} \|\pi^{r_n-2}
\xi'\|_{L^2(I_n)}\\ =&\sqrt{2}L\|e\|_{H^1(I_n)}\|\pi^{r_n-2}
\xi'\|_{L^2(I_n)},
\end{aligned}
\end{equation*}
which  together with the $L^2$-stability of the projection operator
$\pi^{r_n-2}$ leads to
\begin{equation*}
\ds\frac{1}{2} ( |\xi'(t_n)|^2 - |\xi'(t_{n-1})|^2  ) \leq \sqrt{2}L\|e\|_{H^1(I_n)}\| \xi'\|_{L^2(I_n)}.
\end{equation*}
Hence, we have
\begin{eqnarray}  \label{xi-3}
\ds|\xi'(t_n)|^2   \le |\xi'(t_{n-1})|^{2} + \sqrt{2}L\|e\|^2_{H^1(I_n)}+\sqrt{2}L\| \xi'\|^2_{L^2(I_n)}.
\end{eqnarray}
 Moreover, selecting
$\varphi=\pi^{r_n-2}((t_{n-1}-t)\xi')$ in (\ref{xi-1}), gives
\begin{equation*}
\ds\int_{I_n} (t_{n-1}-t) \xi''  \xi' dt =\ds\int_{I_n}\left(f(t, u,
u')-f(t, U, U')\right) \pi^{r_n-2}((t_{n-1}-t)\xi') dt,
\end{equation*}
which together with (\ref{Lip-con}) yields that
\begin{eqnarray}\label{xi-4}
\begin{aligned}
&\ds\frac{1}{2}(-k_n|\xi'(t_n)|^2+\|\xi'\|^2_{L^2(I_n)})\le
L\ds\int_{I_n}\Big(|u-U|+|u'-U'|\Big)|\pi^{r_n-2}((t_{n-1}-t)\xi')|
dt\\
\leq& \sqrt{2}L\left\{\ds\int_{I_n} \left(|u-U|^2+|u'-U'|^2\right)dt\right\}^{\frac{1}{2}}
\|\pi^{r_n-2}((t_{n-1}-t)\xi')\|_{L^2(I_n)}\\
=&\sqrt{2}L\|e\|_{H^1(I_n)}\|\pi^{r_n-2}((t_{n-1}-t)\xi')\|_{L^2(I_n)}.
\end{aligned}
\end{eqnarray}
Noting  the fact that
$$ \| \pi^{r_n-2}((t_{n-1}-t)\xi')\|_{L^2(I_n)} \le   \| (t_{n-1}-t)\xi'\|_{L^2(I_n)} \le k_n \|\xi'\|_{L^2(I_n)}, $$
and  using (\ref{xi-4}) implies
\begin{eqnarray}\label{xi-5}
\begin{aligned}
\|\xi'\|^2_{L^2(I_n)}\leq&
k_n|\xi'(t_n)|^2+2\sqrt{2}Lk_n\|e\|_{H^1(I_n)}\|\xi'\|_{L^2(I_n)}\\
\leq &
k_n|\xi'(t_n)|^2+\sqrt{2}Lk_n\|e\|^2_{H^1(I_n)}+\sqrt{2}Lk_n\|\xi'\|^2_{L^2(I_n)}.
\end{aligned}
\end{eqnarray}
Assume that $k_n$ is sufficiently small such that
 $\sqrt{2}Lk_n <1$,  (\ref{xi-5}) can be rewritten as
 \begin{eqnarray}\label{xi-6}
\|\xi'\|^2_{L^2(I_n)}\leq \ds\frac{k_n}{1-\sqrt{2}Lk_n }|\xi'(t_n)|^2+\ds\frac{\sqrt{2}Lk_n}{1-\sqrt{2}Lk_n }\|e\|^2_{H^1(I_n)}.
\end{eqnarray}
Inserting (\ref{xi-6}) into (\ref{xi-3}) we deduce that
\begin{eqnarray*}
 |\xi'(t_n)|^2 \le |\xi'(t_{n-1})|^2+ \ds\frac{\sqrt{2}Lk_n}{1-\sqrt{2}Lk_n }|\xi'(t_n)|^2+\ds\frac{\sqrt{2}L}{1-\sqrt{2}Lk_n }\|e\|^2_{H^1(I_n)}.
\end{eqnarray*}
We further assume that $k_n$ is sufficiently small and there exists
a constant $\gamma >0$
 such that
 $2\sqrt{2}Lk_n \le \gamma  <1$,  then
\begin{eqnarray}\label{xi-7}
 |\xi'(t_n)|^2\leq \left(1+\ds\frac{\sqrt{2}L}{1-2\sqrt{2}Lk_n }k_n\right)|\xi'(t_{n-1})|^2+\ds\frac{\sqrt{2}L}{1-2\sqrt{2}Lk_n }\|e\|^2_{H^1(I_n)}.
\end{eqnarray}
Summing up (\ref{xi-7}) over the subintervals $I_i,~1 \leq  i \leq
n$, and using the facts that $\xi'(t_0)=0$ and
$\xi'|_{I_i}(t_i)=\xi'|_{I_{i+1}}(t_i),~ 1 \leq i \leq n-1,$ we
obtain
\begin{eqnarray}\label{xi-8}
 |\xi'(t_n)|^2 \le \ds\frac{\sqrt{2}L}{1-\gamma}\ds\sum_{i=1}^{n-1}k_{i+1} |\xi'(t_{i})|^{2}
 +\ds\frac{\sqrt{2}L}{1-\gamma}\ds\sum_{i=1}^{n}\|e\|^2_{H^1(I_i)}.
\end{eqnarray}
Applying the discrete Gronwall inequality in Lemma \ref{gronwall} to
(\ref{xi-8}),  gives
\begin{eqnarray}\label{xi-9}
\begin{aligned}
|\xi'(t_n)|^2 \le&
\ds\frac{\sqrt{2}L}{1-\gamma}\ds\sum_{i=1}^{n}\|e\|^2_{H^1(I_i)}\exp\Big(
\ds\frac{\sqrt{2}L}{1-\gamma} \ds\sum_{i=1}^{n-1}k_{i+1} \Big)\le
Ce^{Ct_n}\|e\|^2_{H^1(0,t_n)}\\ \le& C
\left(\|\eta\|^2_{H^1(0,t_n)}+\|\xi\|^2_{H^1(0,t_n)}\right),
\end{aligned}
\end{eqnarray}
with the constant $C >0$ depends on $L,  \gamma$ and $t_n$.
Inserting (\ref{xi-9}) into (\ref{xi-6}),  we conclude that
\begin{eqnarray}\label{xi-10}
\begin{aligned}
\|\xi'\|^2_{L^2(I_n)} \le& \ds\frac{Ck_n}{1-\gamma}\left(\|\eta\|^2_{H^1(0,t_n)}
+\|\xi\|^2_{H^1(0,t_n)}\right)+\ds\frac{\sqrt{2}Lk_n}{1-\gamma}\|e\|^2_{H^1(I_n)}\\
\le& Ck_n\|\eta\|^2_{H^1(0,t_n)}+ Ck_n\|\xi\|^2_{H^1(0,t_n)}.
\end{aligned}
\end{eqnarray}

On the other hand, using the fact that
$\xi(t)=\ds\int_{t_{n-1}}^{t}\xi'(s) ds+\xi(t_{n-1})$, yields
\begin{eqnarray}\label{xi-12}
\begin{aligned}
\|\xi\|^2_{L^2(I_n)}=&\ds\int_{I_n}\Big(\ds\int_{t_{n-1}}^{t}\xi'(s)ds+\xi(t_{n-1})\Big)^2
dt \leq 2\ds\int_{I_n}\Big(\ds\int_{t_{n-1}}^{t}\xi'(s)ds\Big)^2dt+2k_n|\xi(t_{n-1})|^2\\
\leq&2\ds\int_{I_n}(t-t_{n-1})\Big(\ds\int_{t_{n-1}}^{t}|\xi'(s)|^2ds\Big)dt+2k_n|\xi(t_{n-1})|^2\\
\leq&k_n^2\|\xi'\|^2_{L^2(I_n)}+2k_n|\xi(t_{n-1})|^2.
\end{aligned}
\end{eqnarray}
Since $\Pi^{\bf{r}}u \in H^2(I)$  if $r_n\geq 3$ for $1\leq n\leq
N$, and thus $\xi\in H^2(0,t_n)$. Noting that $\xi(0)=0$, then
\begin{equation}\label{xi-13}
|\xi(t_{n-1})|^2=\left(\ds\int_{0}^{t_{n-1}}\xi'(t)dt\right)^2 \leq
t_{n-1}\|\xi'\|^2_{L^2(0,t_{n-1})}.
\end{equation}
Combining (\ref{xi-12}) and  (\ref{xi-13}), gives
\begin{equation}\label{xi-14}
\|\xi\|^2_{L^2(I_n)}\leq
k_n^2\|\xi'\|^2_{L^2(I_n)}+2t_{n-1}k_n\|\xi'\|^2_{L^2(0,t_{n-1})}
\leq C k_n\|\xi'\|^2_{L^2(0,t_{n})}.
\end{equation}
By (\ref{xi-10}) and (\ref{xi-14}), we  find
 \begin{eqnarray}\label{xi-15}
\begin{aligned}
\|\xi\|^2_{H^1(I_n)}\leq&Ck_n\|\eta\|^2_{H^1(0,t_n)}+Ck_n\|\xi\|^2_{H^1(0,t_{n})}
+Ck_n\|\xi'\|^2_{L^2(0,t_{n})}\\ \leq&
Ck_n\|\eta\|^2_{H^1(0,t_n)}+Ck_n\|\xi\|^2_{H^1(0,t_{n})}.
\end{aligned}
\end{eqnarray}
Assume that $k_n$ is sufficiently small, then (\ref{xi-15}) can be
rewritten as
\begin{equation*}\label{xi-16}
\|\xi\|^2_{H^1(I_n)}
\leq Ck_n\|\eta\|^2_{H^1(0,t_n)}+Ck_n\|\xi\|^2_{H^1(0,t_{n-1})},
\end{equation*}
or equivalently,
\begin{equation*}\label{xi-17}
\ds\frac{\|\xi\|^2_{H^1(I_n)}}{k_n}
\leq C\|\eta\|^2_{H^1(0,t_n)}+C\ds\sum_{i=1}^{n-1}\ds\frac{\|\xi\|^2_{H^1(I_i)}}{k_i}k_i.
\end{equation*}
Applying the discrete Gronwall inequality in Lemma \ref{gronwall} to
the above inequality,  yields
\begin{equation*}\label{xi-18}
\ds\frac{\|\xi\|^2_{H^1(I_n)}}{k_n} \leq
C\|\eta\|^2_{H^1(0,t_n)}\exp({C \sum\limits_{i=1}^{n-1}k_i}),
\end{equation*}
which implies
\begin{equation}\label{xi-19}
\|\xi\|^2_{H^1(I_n)}
\leq Ck_n\|\eta\|^2_{H^1(0,t_n)}.
\end{equation}
Summing up the above estimate over the subintervals $I_i,~1\leq
i\leq n$, gives
\begin{equation}\label{xi-20}
\|\xi\|^2_{H^1(0,t_n)}=\ds\sum_{i=1}^{n}\|\xi\|^2_{H^1(I_n)} \leq
C\ds\sum_{i=1}^{n}k_i\|\eta\|^2_{H^1(0,t_i)} \leq
\|\eta\|^2_{H^1(0,t_n)}\ds\sum_{i=1}^{n}k_i \leq C
\|\eta\|^2_{H^1(0,t_n)}.
\end{equation}
This completes the proof of (\ref{xi-err-main-H1}).

We now turn to the proof of (\ref{xi-err-main-H2}). Selecting
$\varphi= \xi''$ in (\ref{xi-1}) and using (\ref{Lip-con}), leads to
\begin{equation*}  \label{xi-22}
\begin{aligned}
\ds\int_{I_n} |\xi''|^2 dt \le& L\ds\int_{I_n}\Big(|u-U|+|u'-U'|\Big)|\xi''| dt
\leq \sqrt{2}L\left\{\ds\int_{I_n}
\left(|u-U|^2+|u'-U'|^2\right)dt\right\}^{\frac{1}{2}}
\| \xi''\|_{L^2(I_n)}\\
=&\sqrt{2}L\|e\|_{H^1(I_n)}\|\xi''\|_{L^2(I_n)},
\end{aligned}
\end{equation*}
which implies that
\begin{equation}  \label{xi-23}
 \|\xi''\|^2_{L^2(I_n)} \leq 2L^2\|e\|^2_{H^1(I_n)}.
\end{equation}
Summing up (\ref{xi-23}) over all subintervals $I_i,~1\leq i\leq n$,
gives
\begin{equation}  \label{xi-24}
 \|\xi''\|^2_{L^2(0,t_n)}=\ds\sum_{i=1}^{n} \|\xi''\|^2_{L^2(I_i)} \leq\ds\sum_{i=1}^{n} 2L^2\|e\|^2_{H^1(I_i)}=2L^2\|e\|^2_{H^1(0,t_n)}.
\end{equation}
Combing (\ref{xi-20}) and (\ref{xi-24}),  yields
\begin{equation}  \label{xi-25}
 \|\xi''\|^2_{L^2(0,t_n)}\leq4L^2(\|\eta\|^2_{H^1(0,t_n)}+\|\xi\|^2_{H^1(0,t_n)})\leq C \|\eta\|^2_{H^1(0,t_n)}.
\end{equation}
Moreover,  by (\ref{xi-20}) and (\ref{xi-25}), we obtain
\begin{equation}  \label{xi-26}
 \|\xi\|^2_{H^2(0,t_n)}= \|\xi\|^2_{H^1(0,t_n)}+ \|\xi''\|^2_{L^2(0,t_n)}\leq C \|\eta\|^2_{H^1(0,t_n)}.
\end{equation}
This ends the proof of (\ref{xi-err-main-H2}).
\end{proof}

\begin{rem}
It is worth noting that the $hp$-version of the  $C^1$-CPG  scheme (\ref{C1CPG-FEM})
is designed for $r_n \ge 2$ with $1\le n \le N$, but the estimates
presented in Lemma \ref{main-lemma} only hold for $r_n\ge 3$. The
main reason is that we have to make use of the globally
$C^1$-continuity of the piecewise polynomial $\Pi^{\bf{r}}u$ in the
proof (see (\ref{xi-13})). However, the   $C^1$-continuity of
$\Pi^{\bf{r}}u$  only holds for $r_n \ge 3$ (see (\ref{C1-con})).
\end{rem}

\subsection{$H^1$- and $H^2$-error
estimates}

In this section, we shall prove optimal $H^1$- and $H^2$-error
estimates of the $C^1$-CPG method. The following results shows that
the global $H^1$- and $H^2$-error estimates can be bounded by the
approximation errors of $\Pi^{\bf{r}}u$ in $H^1$- and $H^2$-norms,
respectively.

\begin{lemma}\label{hp-error}
 Let $u$ be the exact solution of (\ref{IVP}) and $U$ be the $C^1$-CPG solution of (\ref{C1CPG-FEM}). Assume that $k_n$ is sufficiently small and $r_n
 \ge 3$. Then,  we have
\begin{eqnarray} \label{abs-err-H1}
\| u-U \|_{H^1(I)} \le C   \|u-\Pi^{\bf{r}}u\|_{H^1(I)},
\end{eqnarray}
\begin{eqnarray} \label{abs-err-H2}
\| u-U \|_{H^2(I)} \le C   \|u-\Pi^{\bf{r}}u\|_{H^2(I)},
\end{eqnarray}
where the constant $C > 0$ solely depends on $L$ and $T$.
\end{lemma}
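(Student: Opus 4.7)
The proof is essentially immediate once Lemma \ref{main-lemma} is in hand. The plan is to use the error splitting $e = u - U = \eta + \xi$ from \eqref{err-split}, where $\eta = u - \Pi^{\bf{r}}u$ is controlled directly by the projection estimates in Lemma \ref{Iu-pro}, and $\xi = \Pi^{\bf{r}}u - U$ is controlled through Lemma \ref{main-lemma}, which already bounds $\xi$ in terms of $\eta$ in the required norms.

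For \eqref{abs-err-H1}, I would apply the triangle inequality
\[
\|u-U\|_{H^1(I)} \le \|\eta\|_{H^1(I)} + \|\xi\|_{H^1(I)},
\]
and then invoke \eqref{xi-err-main-H1} with $t_n = T$ to bound the second term by $C\|\eta\|_{H^1(I)}$. Combining yields the desired estimate.

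For \eqref{abs-err-H2}, again by the triangle inequality
\[
\|u-U\|_{H^2(I)} \le \|\eta\|_{H^2(I)} + \|\xi\|_{H^2(I)}.
\]
Now \eqref{xi-err-main-H2} with $t_n = T$ gives $\|\xi\|_{H^2(I)} \le C\|\eta\|_{H^1(I)} \le C\|\eta\|_{H^2(I)}$, which finishes the proof after substitution of $\eta = u - \Pi^{\bf{r}}u$. One should note that the hypothesis $r_n \ge 3$ is used precisely to ensure $\Pi^{\bf{r}}u \in S^{{\bf r},2}(\mathcal{T}_h)$ (via \eqref{C1-con}) so that $\xi \in H^2(I)$ and Lemma \ref{main-lemma} is applicable; the smallness of $k_n$ is inherited from the same lemma.

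There is no real obstacle here: all the work was already absorbed into Lemma \ref{main-lemma}, whose two estimates are tailored exactly so that this corollary is a two-line triangle-inequality argument. The constant $C$ in the final bound depends on $L$ and $T$ through the constant produced by Lemma \ref{main-lemma} evaluated at $t_n = T$.
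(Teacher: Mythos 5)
Your proof is correct and follows exactly the paper's own argument: split $e=\eta+\xi$, apply the triangle inequality, and absorb $\|\xi\|$ via Lemma \ref{main-lemma} with $t_n=T$ (using $\|\eta\|_{H^1(I)}\le\|\eta\|_{H^2(I)}$ for the second estimate). Your added remark on why $r_n\ge 3$ is needed matches the paper's discussion as well.
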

\begin{proof}
Since $u-U=\eta+\xi$, by (\ref{xi-err-main-H1}) we get
\begin{eqnarray*}
 \|u-U\|_{H^1(I)} \le  \|\eta\|_{H^1(I)} +\|\xi\|_{H^1(I)} \le  C
 \|\eta\|_{H^1(I)},
\end{eqnarray*}
which implies (\ref{abs-err-H1}).

Moreover,  by (\ref{xi-err-main-H2}) we  obtain
\begin{eqnarray*}
 \|u-U\|_{H^2(I)} \le  \|\eta\|_{H^2(I)} +\|\xi\|_{H^2(I)}
 \le \|\eta\|_{H^2(I)} +C\|\eta\|_{H^1(I)}
 \le  C \|\eta\|_{H^2(I)} .
\end{eqnarray*}
This completes the proof  of (\ref{abs-err-H2}).
\end{proof}

As a direct consequence of Lemma \ref{hp-error} and the
approximation properties of $\Pi^{\bf{r}}u$ as stated in Lemma
\ref{Iu-pro}, we obtain the following typical $hp$-version error
estimates of the $C^1$-CPG method for   (\ref{IVP}).

\begin{theorem}\label{hp-error-111}
Let $\mathcal{T}_h$ be an arbitrary partition of $I$,  $u$ be the
exact solution of (\ref{IVP}) and $U$ be the $C^1$-CPG solution of
(\ref{C1CPG-FEM}). Assume that $u \in H^2(I)$ satisfies $u|_{I_n} \in
H^{s_{0,n}+1}(I_n)$ for  $s_{0,n} \ge 1$. Then, for $k_n$
sufficiently small and  $r_n\ge 3$, we have
\begin{eqnarray}\label{cor-err-H1-1}
 \|u-U\|^2_{H^1(I)} \le C  \ds\sum_{n=1}^N
\Big(\ds\frac{k_n}{2}\Big)^{2s_n}
\ds\frac{\Gamma(r_n-s_n+1)}{\Gamma(r_n+s_n-1)}\ds\frac{1}{r_n(r_n-1)}\|u\|^2_{H^{s_n+1}(I_n)},
\end{eqnarray}
\begin{eqnarray}\label{cor-err-H2-2}
 \|u-U\|^2_{H^2(I)} \le C \ds\sum_{n=1}^N
\Big(\ds\frac{k_n}{2}\Big)^{2s_n-2}
\ds\frac{\Gamma(r_n-s_n+1)}{\Gamma(r_n+s_n-1)}\|u\|^2_{H^{s_n+1}(I_n)}
\end{eqnarray}
for any real $s_n$,  $1\le s_n \le \min\{r_n, s_{0,n} \}$, where the
constant $C>0$ is independent of $k_n,~r_n$ and $s_n$.

In particular, if $\mathcal T_h$ is a quasi-uniform partition, i.e.,
there exists a constant $C_q \ge 1$ such that $1\le {k}/{k_n} \le
C_q, ~1\le n\le N$.   Assume that $u\in   H^{s+1}(I)$ with $s \ge 1$
and  $r_n \equiv r$. Then
\begin{eqnarray}\label{cor-err-H1-quasi}
 \|u-U\|_{H^1(I)} \le C
 \ds\frac{k^{\min\{r,s\}}}{r^s}\|u\|_{H^{s+1}(I)},
 \end{eqnarray}
 \begin{eqnarray}\label{cor-err-H2-quasi}
 \|u-U\|_{H^2(I)} \le C
 \ds\frac{k^{\min\{r,s\}-1}}{r^{s-1}}\|u\|_{H^{s+1}(I)},
 \end{eqnarray}
where the   constant $C > 0$ is independent of $k$ and $r$.
 \end{theorem}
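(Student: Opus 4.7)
The proof proposal is as follows. The theorem is essentially a corollary of the two preceding results and requires little new machinery.

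First I would tackle the general estimates \re{cor-err-H1-1} and \re{cor-err-H2-2} on arbitrary partitions. The plan is to invoke Lemma \ref{hp-error} to pass from $\|u-U\|_{H^1(I)}$ and $\|u-U\|_{H^2(I)}$ to $\|u-\Pi^{\bf r}u\|_{H^1(I)}$ and $\|u-\Pi^{\bf r}u\|_{H^2(I)}$, which is valid under the stated hypotheses $r_n\ge 3$ and $k_n$ sufficiently small. Then I would apply Lemma \ref{Iu-pro}, specifically the bounds \re{Iu-L2}, \re{Iu-H1} and \re{Iu-H2}, to control these projection errors elementwise in the local parameters $k_n,r_n,s_n$. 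For \re{cor-err-H1-1} this is immediate from \re{Iu-H1} combined with \re{Iu-L2} (the $L^2$ contribution carries an extra factor of $(k_n/2)^2/(r_n-2)^4$ relative to the $H^1$ seminorm, hence is absorbed in the constant). For \re{cor-err-H2-2} I use $\|\eta\|_{H^2(I)}^2 = \|\eta\|_{H^1(I)}^2 + \|\eta''\|_{L^2(I)}^2$ and observe that the $H^2$ bound \re{Iu-H2} dominates the $H^1$ bound by a factor of $(k_n/2)^2$, so the $H^2$ estimate controls everything up to the generic constant.

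For the quasi-uniform special case leading to \re{cor-err-H1-quasi} and \re{cor-err-H2-quasi}, I would first use the quasi-uniformity to replace all local $k_n$ in the summed bounds by the global $k$, with the loss of a constant factor depending on $C_q$; similarly set $r_n\equiv r$ and $s_n\equiv \min\{r,s\}$. The ratio $\Gamma(r-s+1)/\Gamma(r+s-1)$ must then be simplified via Stirling's formula, exactly as noted in the remark following Lemma \ref{Pi-pro}: one obtains $\Gamma(r-s+1)/\Gamma(r+s-1)\le C r^{-2(s-1)}$ uniformly as $r\to\infty$ for fixed $s$, and more generally this bound holds for $1\le s\le r$ with $C$ independent of $r,s$ after a short calculation. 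Combining with the extra $1/(r(r-1))$ in \re{Iu-H1} gives the exponent $r^{-2s}$ needed for \re{cor-err-H1-quasi}, while the $H^2$ bound \re{Iu-H2} directly gives $r^{-2(s-1)}$ needed for \re{cor-err-H2-quasi}. Assembling these and taking square roots yields the announced estimates.

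The step I expect to be slightly delicate, though not a genuine obstacle, is the Stirling estimate for $\Gamma(r-s+1)/\Gamma(r+s-1)$ uniformly in $s\in[1,r]$; one can either cite the analogous computation in the remark after Lemma \ref{Pi-pro} or verify it directly by writing the ratio as a product of $2(s-1)$ factors bounded by $1/(r-s+1)^{2(s-1)}$ and then using the standard inequality $(r-s+1)^{s-1}\ge c\, r^{s-1}$ valid for $s\le r$. All other steps are direct substitutions and telescoping of the sums over $n$.
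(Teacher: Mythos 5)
Your proposal takes exactly the paper's route: the paper's own proof of Theorem \ref{hp-error-111} is precisely the two-step citation of Lemma \ref{hp-error} (to reduce $u-U$ to the projection error $u-\Pi^{\bf r}u$) together with Lemma \ref{Iu-pro}, followed by Stirling's formula in the quasi-uniform case, and your filling-in of the absorption arguments is sound --- in particular the $L^2$ contribution is dominated by the $H^1$ bound because the relative factor $(k_n/2)^2\, r_n(r_n-1)/(r_n-2)^4$ is bounded for $r_n\ge 3$ and $k_n\le T$, and the $H^1$ bound is dominated by the $H^2$ bound for the same reason.

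One correction to a subsidiary claim, the very one you flagged as delicate: the asserted uniformity in $s$ of the Stirling estimate is false. Take $s=r$, which your range $1\le s\le r$ permits; then $\Gamma(r-s+1)/\Gamma(r+s-1)=1/(2r-2)!$, and since $\bigl((2-2/r)/e\bigr)^{2r-2}\to 0$ (as $2/e<1$), Stirling's formula gives $(2r-2)! = o\bigl(r^{2r-2}\bigr)$, so no constant independent of $s$ can yield $1/(2r-2)!\le C\,r^{-2(r-1)}$. Likewise your supporting inequality $(r-s+1)^{s-1}\ge c\,r^{s-1}$ fails at $s=r$, where the left-hand side equals $1$. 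This does not damage the theorem: in \re{cor-err-H1-quasi}--\re{cor-err-H2-quasi} the regularity index $s$ is \emph{fixed} and the constant is only claimed independent of $k$ and $r$ (and similarly, the first part of the theorem involves no Stirling argument at all, its constants coming directly from Lemma \ref{Iu-pro}). The correct patch is elementary: for $r\ge 2s$ one has $r-s+1\ge r/2$, whence
\begin{equation*}
\frac{\Gamma(r-s+1)}{\Gamma(r+s-1)}\le (r-s+1)^{-2(s-1)}\le 2^{2(s-1)}\,r^{-2(s-1)},
\end{equation*}
and the finitely many values $r<2s$ (including the regime $r<s$, where $s_n=\min\{r,s\}=r$) are absorbed into a constant $C=C(s)$. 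With this repair your argument is complete and coincides with the paper's.
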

\begin{proof}
The assertions (\ref{cor-err-H1-1}) and (\ref{cor-err-H2-2}) follow
from Lemmas \ref{hp-error} and  \ref{Iu-pro} immediately. Using
(\ref{cor-err-H1-1}), (\ref{cor-err-H2-2}) and the Stirling's formula,
we obtain (\ref{cor-err-H1-quasi}) and (\ref{cor-err-H2-quasi})
directly.
\end{proof}

\begin{rem}
The estimates (\ref{cor-err-H1-1}) and (\ref{cor-err-H2-2})
 are totally  explicit in the local time steps $k_n$, in  the local  approximation
 degrees $r_n$, and in  the local regularities  $s_n$ of the solution.
 The estimates (\ref{cor-err-H1-quasi}) and (\ref{cor-err-H2-quasi}) imply that the
$hp$-version $C^1$-CPG method can  achieve the desired accuracy by
increasing the polynomial approximation degree $r$ or/and decreasing
the time step $k$.  In particular,  these estimates also show that
the $p$-version $C^1$-CPG method on fixed  time partition
  can yield arbitrarily high-order convergence rate (i.e., spectral convergence) as long as
  the solution  $u$ is smooth enough (i.e., the regularity index $s$ is
 large enough).
\end{rem}

\subsection{$L^2$-error estimate}

In this section, we shall prove optimal $L^2$-error estimate of  the
$hp$-version $C^1$-CPG method based on a duality argument.


Using the Taylor's theorem with Lagrange remainder for the function
$f(t,u,u')$ in the  variables  $u$ and $u'$, we find that there
exist functions $\lambda_1$ and $\lambda_2$, such that the value
$\lambda_1(t)$ is between $u(t)$ and $U(t)$, the value
$\lambda_2(t)$ is between $u'(t)$ and $U'(t)$, and there hold
\begin{eqnarray}\label{tay-1}
\begin{aligned}
f(t,U,U')=&f(t,u,u')-f_u(t,u,u')e-f_{u'}(t,u,u')e'+
\ds\frac{1}{2}f_{uu}(t,\lambda_1,\lambda_2)e^2\\
&+\ds\frac{1}{2}f_{uu'}(t,\lambda_1,\lambda_2)ee'
+\ds\frac{1}{2}f_{u'u}(t,\lambda_1,\lambda_2)ee'
+\ds\frac{1}{2}f_{u'u'}(t,\lambda_1,\lambda_2)(e')^2,
\end{aligned}
\end{eqnarray}
where   $e=u-U$ and $e'=u'-U'$. Assume that $f_{uu'}(t,u,u')$ and
$f_{u'u}(t,u,u')$ are continuous with respect to the variables $u$
and $u'$, then we have
$f_{uu'}(t,\lambda_1,\lambda_2)=f_{u'u}(t,\lambda_1,\lambda_2)$, and
thus    (\ref{tay-1})   can be  rewritten  as
\begin{equation}\label{tay-5}
f(t,u,u')-f(t,U,U')=\theta_1(t) e + \theta_2(t)
e'+R_1(t)e^2+R_2(t)ee'+R_3(t)(e')^2,
\end{equation}
where
$$
\theta_1(t):=f_u(t,u,u'),\quad \theta_2(t):=f_{u'}(t,u,u'),$$
$$R_1(t):=-\ds\frac{1}{2}f_{uu}(t,\lambda_1(t),\lambda_2(t)),\quad
R_2(t):=-f_{uu'}(t,\lambda_1(t),\lambda_2(t)),\quad
R_3(t):=-\ds\frac{1}{2}f_{u'u'}(t,\lambda_1(t),\lambda_2(t)).
$$

We are now ready to state the main results of this section.

\begin{theorem}\label{glo-L2-error}
Let $\mathcal{T}_h$ be an arbitrary partition of $I$,  $u$ be the
exact solution of (\ref{IVP}) and $U$ be the $C^1$-CPG solution of
(\ref{C1CPG-FEM}). Assume that   $f(t,u,u')$ is sufficiently smooth
with respect to the variables $t, u$ and $u'$ on $\bar{I}\times
\mathbb{R} \times \mathbb{R}$. We further assume that $u \in H^2(I)$
satisfies $u|_{I_n} \in H^{s_{0,n}+1}(I_n)$ for $s_{0,n} \ge 1$.
Then, for $k_n$ sufficiently small and  $r_n\ge 3$, we have
 \begin{equation}\label{L2-error}
 \begin{aligned}
\|u-U\|^2_{L^2(I)} \le & C\max\limits_{1\le n\le N} \Big\{
\Big(\ds\frac{k_n}{r_n}\Big)^{4} \Big\}  \ds\sum_{n=1}^N
\Big(\ds\frac{k_n}{2}\Big)^{2s_n-2}
\ds\frac{\Gamma(r_n-s_n+1)}{\Gamma(r_n+s_n-1)}\|u\|^2_{H^{s_n+1}(I_n)}\\
&+ C\max\limits_{1\le n\le N} \Big\{
\Big(\ds\frac{k_n}{r_n}\Big)^{3} \Big\} \left(  \ds\sum_{n=1}^N
\Big(\ds\frac{k_n}{2}\Big)^{2s_n-2}
\ds\frac{\Gamma(r_n-s_n+1)}{\Gamma(r_n+s_n-1)}\|u\|^2_{H^{s_n+1}(I_n)}
\right)^2
\end{aligned}
\end{equation}
for any real $s_n$,  $1\le s_n \le \min\{r_n, s_{0,n} \}$, where the
constant $C>0$ is independent of $k_n,~r_n$ and $s_n$.

In particular,  if $\mathcal T_h$ is a quasi-uniform partition,  $r_n
\equiv r$ and $u\in   H^{s+1}(I)$  with $s \ge \frac{3}{2}$, then
\begin{eqnarray}\label{unif-L2}
 \|u-U\|_{L^2(I)} \le C
 \ds\frac{k^{\min\{s,r\}+1}}{r^{s+1}}\|u\|_{H^{s+1}(I)},
 \end{eqnarray}
 where the constant $C > 0$ is independent of $k$ and $r$.
\end{theorem}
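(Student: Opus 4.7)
The plan is to carry out a classical Aubin--Nitsche duality argument, tailored to the linearization/remainder splitting (\ref{tay-5}). First, I would introduce the backward-in-time dual problem: find $\phi\in H^2(I)$ with $\phi(T)=\phi'(T)=0$ such that
\[
\phi''(t)+\theta_2(t)\phi'(t)+\bigl(\theta_2'(t)-\theta_1(t)\bigr)\phi(t) \;=\; e(t),\quad t\in I.
\]
This is the formal $L^2(I)$-adjoint of the linear part of the error equation on functions vanishing with their derivative at $t=0$. Since $f$ is smooth and $\theta_1,\theta_2,\theta_2'$ are accordingly bounded on $\bar I$, classical ODE theory yields a unique solution satisfying the stability bound $\|\phi\|_{H^2(I)}\le C\|e\|_{L^2(I)}$. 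Using $e(0)=e'(0)=0$ together with the terminal conditions on $\phi$, two integrations by parts kill all boundary contributions and produce the duality identity
\[
\|e\|_{L^2(I)}^2 \;=\; \int_0^T\bigl(e''-\theta_1 e-\theta_2 e'\bigr)\phi\,dt.
\]

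On each $I_n$ I would split $\phi=\pi^{r_n-2}\phi+(\phi-\pi^{r_n-2}\phi)$ with the $L^2$-projection $\pi^{r_n-2}$ of (\ref{L2-projc}). For the $\pi^{r_n-2}\phi$ piece, the Galerkin orthogonality (\ref{orthog}) combined with the Taylor formula (\ref{tay-5}) collapses the linear portion of the integrand, leaving only the quadratic remainder $R_1e^2+R_2ee'+R_3(e')^2$ in the integral against $\pi^{r_n-2}\phi$. For the $(\phi-\pi^{r_n-2}\phi)$ piece, the observation that $\xi|_{I_n}\in P_{r_n}(I_n)$ implies $\xi''|_{I_n}\in P_{r_n-2}(I_n)$, so $L^2$-orthogonality of $\pi^{r_n-2}$ forces $\int_{I_n}\xi''(\phi-\pi^{r_n-2}\phi)\,dt=0$, reducing the $e''$-contribution on this piece to an $\eta''$-contribution. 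Combining yields
\begin{align*}
\|e\|_{L^2(I)}^2 &\;=\; \sum_{n=1}^N\int_{I_n}\eta''(\phi-\pi^{r_n-2}\phi)\,dt \;-\;\sum_{n=1}^N\int_{I_n}(\theta_1 e+\theta_2 e')(\phi-\pi^{r_n-2}\phi)\,dt \\
&\qquad+\;\sum_{n=1}^N\int_{I_n}\bigl(R_1 e^2+R_2 ee'+R_3(e')^2\bigr)\pi^{r_n-2}\phi\,dt.
\end{align*}

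The first two sums are estimated by Cauchy--Schwarz, the approximation bound $\|\phi-\pi^{r_n-2}\phi\|_{L^2(I_n)}\le C(k_n/r_n)^2\|\phi''\|_{L^2(I_n)}$ from (\ref{L2-projc-appro}), the regularity $\|\phi\|_{H^2(I)}\le C\|e\|_{L^2(I)}$ from Step~1, and the $H^1$- and $H^2$-error estimates of Lemma~\ref{hp-error}; this produces a contribution of order $C\max_n(k_n/r_n)^2\,\|u-\Pi^{\bf{r}}u\|_{H^2(I)}\,\|e\|_{L^2(I)}$, whose square (after Young's inequality to absorb $\|e\|_{L^2(I)}$) matches the first term on the right of (\ref{L2-error}). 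For the quadratic sum, the boundedness of $R_1,R_2,R_3$ together with the Sobolev embedding $H^1(I)\hookrightarrow L^\infty(I)$ applied both to $\phi$ (via the regularity above) and to $e$ (via Lemma~\ref{hp-error}) give a contribution of order $C\max_n(k_n/r_n)^{3/2}\,\|u-\Pi^{\bf{r}}u\|_{H^2(I)}^2\,\|e\|_{L^2(I)}$; a second application of Young's inequality produces the second term on the right of (\ref{L2-error}). The quasi-uniform estimate (\ref{unif-L2}) then follows from (\ref{L2-error}) by inserting the $H^{s+1}$ regularity of $u$ into Lemma~\ref{Iu-pro} and using Stirling's formula on the gamma ratios, the hypothesis $s\ge 3/2$ being exactly what guarantees that the quadratic (second) term in (\ref{L2-error}) is dominated by the linear (first) one.

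The main obstacle is the sharp $(k_n/r_n)^3$ exponent in the nonlinear contribution: obtaining it cleanly requires exploiting not just the $L^2$- but also a sharper (essentially $L^\infty$-in-space) smallness of $\phi-\pi^{r_n-2}\phi$ on each $I_n$ via local Sobolev interpolation of the $L^2$ and $H^1$ approximation bounds, combined with the $H^1\hookrightarrow L^\infty$ smallness of the error itself coming from Theorem~\ref{hp-error-111}. The rest of the proof is a careful but routine bookkeeping of $k_n$- and $r_n$-dependencies.
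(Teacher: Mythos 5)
Your proposal follows essentially the same route as the paper's own proof: the same backward dual problem (your $\phi$-equation coincides with (\ref{aux-1}), since $(\theta_2 g)'=\theta_2'g+\theta_2 g'$), the same duality identity, the same interval-wise splitting via the $L^2$-projection $\pi^{r_n-2}$ combined with the Galerkin orthogonality (\ref{orthog}) and the Taylor remainder (\ref{tay-5}), and the same Sobolev-interpolation bounds (\ref{L4})--(\ref{H1-4}) feeding into Theorem \ref{hp-error-111} and Stirling's formula. The only cosmetic deviations are your extra observation that $\int_{I_n}\xi''(\phi-\pi^{r_n-2}\phi)\,dt=0$ (the paper simply bounds the full $e''$-term using $\|e\|_{H^2(I)}\le C\|\eta\|_{H^2(I)}$) and your use of Young's inequality where the paper divides $\|e\|^4_{L^2(I)}$ by $\|e\|^2_{L^2(I)}$; note also that, contrary to your closing remark, the sharp $(k_n/r_n)^3$ factor requires no $L^\infty$-smallness of $\phi-\pi^{r_n-2}\phi$ at all --- the quadratic term is handled with only the $L^2$-stability of the projection, the exponent arising entirely from the interpolation inequalities for $e$, $e'$ together with the rate gap between the $H^1$- and $H^2$-estimates of Theorem \ref{hp-error-111}.
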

\begin{proof}
  We first construct the following auxiliary problem: find $g$
such that
\begin{equation}\label{aux-1}
 \left\{\begin{array}{ll}
  g''+(\theta_2g)' -\theta_1g=e,\quad & t\in [0,T],
\\[5pt]g(T)=0,\quad g'(T)=0,
\end{array}\right.
\end{equation}
where $e=u-U$ and the coefficients $\theta_1, \theta_2$ are given in
(\ref{tay-5}). Suppose that $\theta_1(t)=f_u(t,u,u')$ and
$\theta_2(t)=f_{u'}(t,u,u')$ are sufficiently smooth. We may assume
that
\begin{equation}\label{reg}
\|g\|_{H^2(I)}\le C\|e\|_{L^2(I)}.
\end{equation}

From (\ref{orthog}) and (\ref{tay-5}),  we have
\begin{eqnarray}\label{orht-1}
\ds\int_{I_n} \left( e''- \theta_2 e'- \theta_1e\right)\varphi dt =
\ds\int_{I_n}\left( R_1e^2+R_2ee'+R_3(e')^2\right)\varphi dt , \quad
\forall \varphi \in P_{r_n-2}(I_n).
\end{eqnarray}
For convenience, we set $$ \delta =\int_{0}^{T} \left(e''-\theta_2
e'-\theta_1e\right)g dt,$$ where $g$ is the solution of
(\ref{aux-1}). Using
  integration by parts, the fact $e(0)=e'(0)=0$   and (\ref{aux-1}),
  yields
\begin{eqnarray}\label{L2-error-3}
\delta=\int_{0}^{T} \left(e''-\theta_2 e'-\theta_1e\right)g dt
=\ds\int_{0}^{T}\left( g''+ (\theta_2g)' -\theta_1g
\right)edt=\ds\int_{0}^{T}e^2dt=\|e\|^2_{L^2(I)}.
\end{eqnarray}
For our purpose,  we  define a piecewise $L^2$-projection of $g$ by
 \begin{equation}\label{L2-pir-aa}
 \Pi_*^{{\bf{r-2}}}g |_{I_n}= \pi^{r_n-2}g,\quad   1\le n\le N,
\end{equation}
where $\pi^{r_n-2}$ with $r_n \ge 2$ is the $L^2$-projection
operator given in (\ref{L2-projc}).

Assume that   $f(t,u,u')$ is sufficiently smooth with respect to the
variables $t, u$ and $u'$ on $\bar{I}\times \mathbb{R} \times
\mathbb{R}$ such that $|R_1(t)|, |R_2(t)|, |R_3(t)| \leq C$ for
$t\in \bar{I}$, respectively. Then,  using (\ref{L2-error-3}),
(\ref{orht-1}), (\ref{L2-projc-appro}), (\ref{reg}) and the
$L^2$-stability of the projection operator $\pi^{r_n-2}$,  gives
\begin{eqnarray*}
\begin{aligned}
\|e\|^4_{L^2(I)}
=& \left(\ds\int_{0}^{T} (e''- \theta_2 e'-\theta_1 e)(g-\Pi_*^{\bf{r-2}}g)dt+\ds\int_{0}^{T} (e''- \theta_2 e'-\theta_1 e)\Pi_*^{\bf{r-2}}gdt\right)^2\\
\le&2\left(\ds\int_{0}^{T} (e''- \theta_2 e'-\theta_1 e)(g-\Pi_*^{\bf{r-2}}g)dt\right)^2+2\left(\ds\int_{0}^{T}\left( R_1e^2+R_2ee'+R_3(e')^2\right)\Pi_*^{\bf{r-2}}g dt\right)^2\\
\le&C\|e\|^2_{H^2(I)}\|g-\Pi_*^{\bf{r-2}}g\|^2_{L^2(I)}+C\ds\int_{0}^{T}\left(e^2+(e')^2\right)^2dt  \|\Pi_*^{\bf{r-2}}g\|^2_{L^2(I)} \\
\le&C\|e\|^2_{H^2(I)}\ds\sum_{n=1}^{N}\Big(\ds\frac{k_n}{r_n}\Big)^{4} \|g\|^2_{H^2(I_n)}+C\left(\ds\int_{0}^{T}e^4dt +\ds\int_{0}^{T}(e')^4dt \right)\|g\|^2_{L^2(I)}\\
\le&C\|e\|^2_{H^2(I)}\max\limits_{1\le n\le N} \Big\{
\Big(\ds\frac{k_n}{r_n}\Big)^{4} \Big\}
\|e\|^2_{L^2(I)}+C\left(\ds\int_{0}^{T}e^4dt
+\ds\int_{0}^{T}(e')^4dt \right)\|e\|^2_{L^2(I)}
\end{aligned}
\end{eqnarray*}
for $r_n \ge 3$, which implies that
\begin{eqnarray}\label{L2-error-5}
\|e\|^2_{L^2(I)} \le C \max\limits_{1\le n\le N} \Big\{
\Big(\ds\frac{k_n}{r_n}\Big)^{4} \Big\} \|e\|^2_{H^2(I)}+ C
\ds\int_{0}^{T}e^4dt +C\ds\int_{0}^{T}(e')^4dt.
\end{eqnarray}
Due to the Sobolev inequality, there holds
$\|e\|^2_{L^{\infty}(I)}\leq C\|e\|_{L^{2}(I)}\|e\|_{H^{1}(I)}$, and
then
\begin{equation}\label{L4}
\ds\int_{0}^{T}e^4dt\leq \|e\|^2_{L^{\infty}(I)}\|e\|^2_{L^2(I)}\leq C\|e\|_{H^{1}(I)}\|e\|^3_{L^2(I)}.
\end{equation}
Similarly, using the inequality  $\|e'\|^2_{L^{\infty}(I)}\leq
C\|e\|_{H^{1}(I)}\|e\|_{H^{2}(I)}$,  gives
\begin{equation}\label{H1-4}
\ds\int_{0}^{T}(e')^4dt\leq
\|e'\|^2_{L^{\infty}(I)}\|e'\|^2_{L^2(I)}
       \leq C\|e\|_{H^{2}(I)}\|e\|^3_{H^1(I)}.
\end{equation}
Inserting (\ref{L4}) and (\ref{H1-4} ) into (\ref{L2-error-5}), we
obtain
\begin{eqnarray}\label{L2-error-7}
\|e\|^2_{L^2(I)} \le C\max\limits_{1\le n\le N} \Big\{
\Big(\ds\frac{k_n}{r_n}\Big)^{4} \Big\}
\|e\|^2_{H^2(I)}+C\|e\|_{H^{2}(I)}\|e\|^3_{H^1(I)},
\end{eqnarray}
which together with the  $H^1$- and  $H^2$-error estimates in
Theorem \ref{hp-error-111} leads to
 \begin{equation*}\label{L2-error-1111}
 \begin{aligned}
\|e\|^2_{L^2(I)}\le& C\max\limits_{1\le n\le N} \Big\{
\Big(\ds\frac{k_n}{r_n}\Big)^{4} \Big\}  \ds\sum_{n=1}^N
\Big(\ds\frac{k_n}{2}\Big)^{2s_n-2}
\ds\frac{\Gamma(r_n-s_n+1)}{\Gamma(r_n+s_n-1)}\|u\|^2_{H^{s_n+1}(I_n)}\\
&+C\left(\ds\sum_{n=1}^N \Big(\ds\frac{k_n}{2}\Big)^{2s_n-2}
\ds\frac{\Gamma(r_n-s_n+1)}{\Gamma(r_n+s_n-1)}\|u\|^2_{H^{s_n+1}(I_n)}\right)^{\frac{1}{2}}\\
&\quad \times \left(\ds\sum_{n=1}^N
\Big(\ds\frac{k_n}{2}\Big)^{2s_n}
\ds\frac{\Gamma(r_n-s_n+1)}{\Gamma(r_n+s_n-1)}\ds\frac{1}{r_n(r_n-1)}\|u\|^2_{H^{s_n+1}(I_n)}\right)^{\frac{3}{2}}\\
\le & C\max\limits_{1\le n\le N} \Big\{
\Big(\ds\frac{k_n}{r_n}\Big)^{4} \Big\}  \ds\sum_{n=1}^N
\Big(\ds\frac{k_n}{2}\Big)^{2s_n-2}
\ds\frac{\Gamma(r_n-s_n+1)}{\Gamma(r_n+s_n-1)}\|u\|^2_{H^{s_n+1}(I_n)}\\
&+ C\max\limits_{1\le n\le N} \Big\{
\Big(\ds\frac{k_n}{r_n}\Big)^{3} \Big\} \left(  \ds\sum_{n=1}^N
\Big(\ds\frac{k_n}{2}\Big)^{2s_n-2}
\ds\frac{\Gamma(r_n-s_n+1)}{\Gamma(r_n+s_n-1)}\|u\|^2_{H^{s_n+1}(I_n)}
\right)^2.
\end{aligned}
\end{equation*}
This proves (\ref{L2-error}).

Moreover, if $\mathcal T_h$ is a quasi-uniform partition and  $r_n
\equiv r$, applying the Stirling's formula to (\ref{L2-error}) gives
\begin{eqnarray*}\label{L2-error-8}
\|e\|^2_{L^2(I)} \le
C\ds\frac{k^{2\min\{s,r\}+2}}{r^{2s+2}}\|u\|^2_{H^{s+1}(I)}
+C\ds\frac{k^{4\min\{s,r\}-1}}{r^{4s-1}}\|u\|^4_{H^{s+1}(I)} \le
C\ds\frac{k^{2\min\{s,r\}+2}}{r^{2s+2}}\|u\|^2_{H^{s+1}(I)}
\end{eqnarray*}
for $s\geq\frac{3}{2}$.  This proves (\ref{unif-L2}).
\end{proof}

\subsection{Nodal superconvergence estimates}

In this section, we shall prove that the $hp$-version $C^1$-CPG
method superconverges at the nodal points of the time partition
$\mathcal{T}_h$ with regard to the local time steps $k_n$ and the
local approximation degrees $r_n$.

 The main results of this section are stated in the following theorem.

\begin{theorem}\label{hp-superconvergence}
Let $\mathcal{T}_h$ be an arbitrary partition of $I$,  $u$ be the
exact solution of (\ref{IVP}) and $U$ be the $C^1$-CPG solution of
(\ref{C1CPG-FEM}). Assume that  $f(t,u,u')$ is sufficiently smooth
with respect to the variables $t, u$ and $u'$ on $\bar{I}\times
\mathbb{R} \times \mathbb{R}$. We further assume  that $u \in
H^{s+1}(I)$  with $s \ge 1$. Then, for $k_n$   sufficiently small
and $r_n\ge 3$, we have
\begin{equation}\label{super-nodes}
\begin{aligned}
|e(t_n)|^2+ |e'(t_n)|^2 \le&   C \max\limits_{1\leq i\leq n} \left\{
\ds\frac{k_i^{2\min\{r_i-2,s\}+2}}{r_i^{2(s+1)}} \right\}
\ds\sum_{i=1}^n \Big(\ds\frac{k_i}{2}\Big)^{2\min\{r_i,s\}-2}
\ds\frac{\Gamma(r_i-s+1)}{\Gamma(r_i+s-1)}\|u\|^2_{H^{s+1}(I_i)}\\
&+  C\max\limits_{1\le i\le n} \Big\{
\Big(\ds\frac{k_i}{r_i}\Big)^{3} \Big\} \left(\ds\sum_{i=1}^n
\Big(\ds\frac{k_i}{2}\Big)^{2\min\{r_i,s\}-2}
\ds\frac{\Gamma(r_i-s+1)}{\Gamma(r_i+s-1)}\|u\|^2_{H^{s+1}(I_i)}\right)^2
\end{aligned}
 \end{equation}
 for $1\le n\le N$, where $e=u-U$ and the constant $C>0$ is independent of $k_n$ and
 $r_n$.

In particular,  if $\mathcal T_h$ is a quasi-uniform partition  and
$r_n \equiv r$, then we have
\begin{equation}\label{cor-err-2}
 |e(t_n)|+ |e'(t_n)|\le C
 \ds\frac{k^{\min\{r-2,s-\frac 12\}+\min\{r,s\}}}{r^{2s-\frac 12}}\|u\|_{H^{s+1}(0, t_n)},
 \quad 1\le n\le N,
 \end{equation}
    where the constant $C > 0$ is independent of $k$ and $r$.
\end{theorem}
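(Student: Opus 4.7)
The plan is to reduce the nodal quantities $e(t_n)$ and $e'(t_n)$ to $\xi(t_n)$ and $\xi'(t_n)$, and then extract these by a duality argument on $[0,t_n]$. The first observation is that for $r_n \ge 3$ the piecewise projector satisfies $\eta(t_m)=\eta'(t_m)=0$ at every node (equation \re{C1-con}), so the splitting $e=\eta+\xi$ gives $e(t_n)=\xi(t_n)$ and $e'(t_n)=\xi'(t_n)$. Moreover, since $\xi \in S^{\mathbf{r},2}(\mathcal{T}_h)\subset H^2(I)$, the quantity $\xi(t_n)$ is well-defined. Throughout, I shall use that under the smoothness assumptions on $f$ the Taylor identity \re{tay-5} holds with bounded coefficients $\theta_1,\theta_2,R_1,R_2,R_3$.

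Next, I would introduce, for each fixed $n$, a pair of linear adjoint problems on $[0,t_n]$: find $g$ such that
\begin{equation*}
g''(t)+(\theta_2(t)g(t))'-\theta_1(t)g(t)=0, \quad t\in[0,t_n],
\end{equation*}
with terminal data $(g(t_n),g'(t_n))=(0,-1)$ in the first problem and $(g(t_n),g'(t_n))=(1,-\theta_2(t_n))$ in the second. Since $\theta_1,\theta_2$ are smooth, classical ODE theory gives $g$ as smooth as we wish with $\|g\|_{H^{s+1}(0,t_n)}\le C$. Integrating by parts twice and using $e(0)=e'(0)=0$ yields the identity
\begin{equation*}
\int_0^{t_n}(e''-\theta_1 e -\theta_2 e')g\,dt = e'(t_n)g(t_n)-e(t_n)\bigl[g'(t_n)+\theta_2(t_n)g(t_n)\bigr],
\end{equation*}
so the first choice of terminal data recovers $e(t_n)$ and the second recovers $e'(t_n)$.

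Now I would combine this with the error equation. By \re{tay-5} and the Galerkin orthogonality \re{orthog}, for any $\varphi \in S^{\mathbf{r}-2,0}(\mathcal{T}_h)$,
\begin{equation*}
\int_0^{t_n}(e''-\theta_1 e-\theta_2 e')\varphi\,dt = \int_0^{t_n}\bigl(R_1e^2+R_2 ee'+R_3(e')^2\bigr)\varphi\,dt.
\end{equation*}
Subtracting the piecewise $L^2$-projection $\Pi^{\mathbf{r}-2}_\ast g$ defined in \re{L2-pir-aa} splits the duality integral as
\begin{equation*}
\int_0^{t_n}(e''-\theta_1 e-\theta_2 e')(g-\Pi^{\mathbf{r}-2}_\ast g)\,dt \;+\; \int_0^{t_n}\bigl(R_1 e^2+R_2 ee'+R_3(e')^2\bigr)\Pi^{\mathbf{r}-2}_\ast g\,dt.
\end{equation*}
The linear piece is bounded by Cauchy--Schwarz as $C\|e\|_{H^2(0,t_n)}\|g-\Pi^{\mathbf{r}-2}_\ast g\|_{L^2(0,t_n)}$, where $\|e\|_{H^2(0,t_n)}$ is controlled by Theorem~\ref{hp-error-111} and the projection error by \re{L2-projc-appro} applied with the smoothness $g\in H^{s+1}$, giving the $\max_i k_i^{\min\{r_i-2,s\}+1}/r_i^{s+1}$ factor that produces the first term of \re{super-nodes}. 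The nonlinear piece is bounded, using $\|\Pi^{\mathbf{r}-2}_\ast g\|_{L^2(0,t_n)}\le C\|g\|_{L^2(0,t_n)}\le C$ together with the Sobolev inequalities $\int e^4\le C\|e\|_{H^1}\|e\|_{L^2}^3$ and $\int(e')^4\le C\|e\|_{H^2}\|e\|_{H^1}^3$ (already used in the proof of Theorem~\ref{glo-L2-error}), and then factoring $\|e\|_{H^2}^{2}\|e\|_{H^1}^{2}$ as $\max_i(k_i/r_i)^3\bigl(\sum_i(k_i/2)^{2\min\{r_i,s\}-2}\Gamma(r_i-s+1)/\Gamma(r_i+s-1)\|u\|_{H^{s+1}(I_i)}^2\bigr)^2$ via the $H^1$- and $H^2$-error estimates from Theorem~\ref{hp-error-111}. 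Combining the two pairs of dual identities yields \re{super-nodes}. The quasi-uniform consequence \re{cor-err-2} follows by applying Stirling's formula to \re{super-nodes}.

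The main obstacle I anticipate is bookkeeping: rewriting the nonlinear contribution, which naturally comes out as $\|e\|_{H^2}^2\|e\|_{H^1}^2$ (and $\|e\|_{H^1}^4$) multiplied by $\|g\|_{L^2}^2$, in the exact hybrid form $\max_i(k_i/r_i)^3\cdot(\sum_i\cdots)^2$ required by \re{super-nodes}; this requires carefully pulling one factor $(k_i/2)^2/r_i^2$ out of the $H^1$-sum as a maximum and balancing it against the $H^2$-sum. A secondary technical point is ensuring that the regularity bound $\|g\|_{H^{s+1}(0,t_n)}\le C$ can indeed be taken uniformly in $s$ (it can, since the terminal data are constants and the adjoint coefficients inherit smoothness from $f$, $u$, $U$ via $\theta_1=f_u(t,u,u')$, $\theta_2=f_{u'}(t,u,u')$).
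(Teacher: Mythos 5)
Your proposal is correct and follows essentially the same route as the paper: a duality argument with the linearized adjoint problem $g''+(\theta_2 g)'-\theta_1 g=0$ on $[0,t_n]$, the error identity obtained from the Taylor expansion \re{tay-5} and Galerkin orthogonality, subtraction of the piecewise $L^2$-projection $\Pi_*^{\bf{r-2}}g$, the Sobolev bounds on $\int e^4$ and $\int (e')^4$, and finally the $H^1$/$H^2$ estimates of Theorem \ref{hp-error-111} with Stirling's formula. The only (harmless) differences are cosmetic: you normalize the dual data to constants --- in particular choosing $(g(t_n),g'(t_n))=(1,-\theta_2(t_n))$ to extract $e'(t_n)$ directly --- whereas the paper takes terminal values proportional to the unknown nodal errors, divides through, and recovers $|e'(t_n)|$ from $|e'(t_n)-\theta_2(t_n)e(t_n)|$ by a triangle inequality.
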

\begin{proof}
 We first construct the following auxiliary problem: find $w$ such
 that
\begin{equation}\label{auxx-2}
 \left\{\begin{array}{ll}
 w''+(\theta_2w)' -\theta_1w=0,\quad & t\in [0,t_n],\\[3mm]
 w(t_n)=w_0,\quad w'(t_n)=w_1
 \end{array}\right.
\end{equation}
for $1\le n\le N$, where  the coefficients $\theta_1, \theta_2$ are
given in (\ref{tay-5}),    $w_0, w_1$ are suitable terminal values
to be determined later.

 Suppose that   $f(t,u,u')$ is sufficiently
smooth with respect to the variables $t, u$ and $u'$ on
$\bar{I}\times \mathbb{R} \times \mathbb{R}$ such that   $\theta_1$
and $\theta_2$ are also smooth functions. Then, we may assume that
the   problem (\ref{auxx-2}) has a  unique solution $w$ which can be
expressed by
  $$w(t)=w_0 \varphi_1(t)+ w_1 \varphi_2(t), \quad   t\in [0,t_n],$$
with $\varphi_1$ and $\varphi_2$ be smooth functions,
  and  $w$ satisfies the
a priori estimate
\begin{equation}\label{g-prio}
\|w\|^2_{H^{s+1}(0,t_n)}\le C \left(|w_0|^2+|w_1|^2\right).
\end{equation}

For convenience, we set $$\varrho=\int_{0}^{t_n} \left(e''-\theta_2
e'-\theta_1e\right)w dt$$ with $e=u-U$ and $w$ be the solution of
(\ref{auxx-2}). Then, using integration by parts,   the fact
$e(0)=e'(0)=0$ and (\ref{auxx-2}), gives
\begin{eqnarray}\label{conp-1}
\begin{aligned}
\varrho=&w_0 \left(e'(t_n)-\theta_2(t_n)e(t_n)\right)-w_1
e(t_n)+\ds\int_{0}^{t_n} (w''+(\theta_2w)' -\theta_1w)edt\\ =&w_0
\left(e'(t_n)-\theta_2(t_n)e(t_n)\right)-w_1 e(t_n).
\end{aligned}
\end{eqnarray}
Noting that $R_1, R_2$ and   $R_3$ are bounded functions on
$\bar{I}$ provided that  $f(t,u,u')$ is sufficiently smooth, then
using (\ref{orht-1}), (\ref{L2-projc-appro}) and the $L^2$-stability
of the projection operator $\pi^{r_n-2}$, we have
\begin{eqnarray}\label{conp-3}
\begin{array}{lll}
|\varrho|^2&=\left|\ds\int_{0}^{t_n} (e''-\theta_2
e'-\theta_1e)(w-\Pi_*^{\bf{r-2}}w)dt+\ds\int_{0}^{t_n} (e''-\theta_2
e'-\theta_1e)\Pi_*^{\bf{r-2}}wdt\right|^2 \\[3mm]
&\le 2\left|\ds\int_{0}^{t_n} (e''-\theta_2
e'-\theta_1e)(w-\Pi_*^{\bf{r-2}}w)dt\right|^2+2\left|\ds\int_{0}^{t_n} \left( R_1e^2+R_2ee'+R_3(e')^2\right)\Pi_*^{\bf{r-2}}wdt\right|^2\\[5mm]
&\le C \|e\|_{H^2(0,t_n)}^2\|w-\Pi_*^{\bf{r-2}}w\|_{L^2(0,t_n)}^2 +C
\ds\int_{0}^{t_n} \left(e^4+(e')^4\right) dt
\|\Pi_*^{\bf{r-2}}w\|_{L^2(0,t_n)}^2\\[3mm]
&\le   C \|e\|^2_{H^2(0,t_n)}  \ds\sum_{i=1}^n
  \frac{k_i^{2\min\{r_i-2,s\}+2}}{r_i^{2(s+1)}}
  \|w\|^2_{H^{s+1}(I_i)}+C
\ds\int_{0}^{t_n} \left(e^4+(e')^4\right) dt\|w\|^2_{L^2(0,t_n)},
\end{array}
\end{eqnarray}
where $\Pi_*^{\bf{r-2}}w$ is the piecewise $L^2$-projection of $w$
as defined by (\ref{L2-pir-aa}). Similar to the derivation of
(\ref{L4}) and (\ref{H1-4}), there hold
\begin{equation}\label{L-11}
\ds\int_{0}^{t_n}e^4dt\leq C\|e\|_{H^{1}(0,t_n)}\|e\|^3_{L^2(0,t_n)}
\quad \mbox{and} \quad \ds\int_{0}^{t_n}(e')^4dt
       \leq C\|e\|_{H^{2}(0,t_n)}\|e\|^3_{H^1(0,t_n)}.
\end{equation}
Combining (\ref{conp-3}), (\ref{L-11}) and (\ref{g-prio}), yields
\begin{eqnarray}\label{conp-4}
\begin{aligned}
{\varrho}^2
\le&   C \max\limits_{1\leq i\leq n} \left\{
\ds\frac{k_i^{2\min\{r_i-2,s\}+2}}{r_i^{2(s+1)}}\right\}
\|e\|^2_{H^2(0,t_n)} \|w\|^2_{H^{s+1}(0,t_n)}\\
&+C\|e\|_{H^{2}(0,t_n)}\|e\|^3_{H^1(0,t_n)}\|w\|^2_{L^2(0,t_n)}\\
\le&  C \max\limits_{1\leq i\leq n} \left\{
\ds\frac{k_i^{2\min\{r_i-2,s\}+2}}{r_i^{2(s+1)}} \right\}
  \|e\|^2_{H^2(0,t_n)} \left(|w_0|^2+|w_1|^2\right)\\
  &+C\|e\|_{H^{2}(0,t_n)}\|e\|^3_{H^1(0,t_n)}\left(|w_0|^2+|w_1|^2\right).
\end{aligned}
\end{eqnarray}

We now set $w_0=0$ and $w_1=e(t_n)$. By (\ref{conp-1}) and
(\ref{conp-4}), we have
\begin{eqnarray*}
{\varrho}^2=|e(t_n)|^4\leq  C \left(\max\limits_{1\leq i\leq n}
\left\{ \ds\frac{k_i^{2\min\{r_i-2,s\}+2}}{r_i^{2(s+1)}}
\right\}\|e\|^2_{H^{2}(0,t_n)}+\|e\|_{H^{2}(0,t_n)}\|e\|^3_{H^1(0,t_n)}
\right)  |e(t_n)|^2,
\end{eqnarray*}
which implies that
\begin{eqnarray}\label{conp-6}
\begin{array}{lll}
|e(t_n)|^2 \le C  \max\limits_{1\leq i\leq n} \left\{
\ds\frac{k_i^{2\min\{r_i-2,s\}+2}}{r_i^{2(s+1)}} \right\}
  \|e\|^2_{H^2(0,t_n)}+C\|e\|_{H^{2}(0,t_n)}\|e\|^3_{H^1(0,t_n)}.
\end{array}
\end{eqnarray}
On the other hand, we set $w_0=e'(t_n)$ and $w_1=0$. Similar to
(\ref{conp-6}), using (\ref{conp-1}) and (\ref{conp-4}), we get
\begin{equation}\label{conp-8}
\begin{aligned}
&e'(t_n)-\theta_2(t_n)e(t_n)|^2 \le  C \max\limits_{1\leq i\leq n}
\left\{ \ds\frac{k_i^{2\min\{r_i-2,s\}+2}}{r_i^{2(s+1)}} \right\}
  \|e\|^2_{H^2(0,t_n)}+C\|e\|_{H^{2}(0,t_n)}\|e\|^3_{H^1(0,t_n)}.
\end{aligned}
\end{equation}
Noting that  $|e'(t_n)|^2 \le   2|e'(t_n)-\theta_2(t_n)e(t_n)|^2 +
2|\theta_2(t_n)e(t_n)|^2$,   using   (\ref{conp-6}) and
(\ref{conp-8}), gives
\begin{eqnarray}\label{conp-9}
|e'(t_n)|^2 \le C  \max\limits_{1\leq i\leq n} \left\{
\ds\frac{k_i^{2\min\{r_i-2,s\}+2}}{r_i^{2(s+1)}} \right\}
  \|e\|^2_{H^2(0,t_n)}+C\|e\|_{H^{2}(0,t_n)}\|e\|^3_{H^1(0,t_n)}.
 \end{eqnarray}

Combining (\ref{conp-6}), (\ref{conp-9}),  the $H^1$- and
$H^2$-error estimates in Theorem \ref{hp-error-111}, we obtain

\begin{equation*}\label{super-nodes-111}
\begin{aligned}
|e(t_n)|^2+ |e'(t_n)|^2  \le& C \max\limits_{1\leq i\leq n} \left\{
\ds\frac{k_i^{2\min\{r_i-2,s\}+2}}{r_i^{2(s+1)}} \right\}
\ds\sum_{i=1}^n \Big(\ds\frac{k_i}{2}\Big)^{2\min\{r_i,s\}-2}
\ds\frac{\Gamma(r_i-s+1)}{\Gamma(r_i+s-1)}\|u\|^2_{H^{s+1}(I_i)}\\
&+C\left(\ds\sum_{i=1}^n
\Big(\ds\frac{k_i}{2}\Big)^{2\min\{r_i,s\}-2}
\ds\frac{\Gamma(r_i-s+1)}{\Gamma(r_i+s-1)}\|u\|^2_{H^{s+1}(I_i)}\right)^{\frac{1}{2}}\\
&\times \left(\ds\sum_{i=1}^n
\Big(\ds\frac{k_i}{2}\Big)^{2\min\{r_i,s\}}
\ds\frac{\Gamma(r_i-s+1)}{\Gamma(r_i+s-1)}\ds\frac{1}{r_i(r_i-1)}\|u\|^2_{H^{s+1}(I_i)}\right)^{\frac{3}{2}}\\
\le& C \max\limits_{1\leq i\leq n} \left\{
\ds\frac{k_i^{2\min\{r_i-2,s\}+2}}{r_i^{2(s+1)}} \right\}
\ds\sum_{i=1}^n \Big(\ds\frac{k_i}{2}\Big)^{2\min\{r_i,s\}-2}
\ds\frac{\Gamma(r_i-s+1)}{\Gamma(r_i+s-1)}\|u\|^2_{H^{s+1}(I_i)}\\
&+  C\max\limits_{1\le i\le n} \Big\{
\Big(\ds\frac{k_i}{r_i}\Big)^{3} \Big\} \left(\ds\sum_{i=1}^n
\Big(\ds\frac{k_i}{2}\Big)^{2\min\{r_i,s\}-2}
\ds\frac{\Gamma(r_i-s+1)}{\Gamma(r_i+s-1)}\|u\|^2_{H^{s+1}(I_i)}\right)^2.
\end{aligned}
 \end{equation*}
This proves (\ref{super-nodes}).

Moreover, if $\mathcal T_h$ is a quasi-uniform partition and  $r_n
\equiv r$, applying the Stirling's formula to (\ref{super-nodes})
gives
\begin{equation*}
\begin{aligned}
 |e(t_n)|+ |e'(t_n)|  \le & C
 \ds\frac{k^{\min\{r-2,s\}+1 }}{r^{s+1}} \ds\frac{k^{\min\{r,s\}-1 }}{r^{s-1}}\|u\|_{H^{s+1}(0,
 t_n)} +     C \Big(\ds\frac{k}{r}\Big)^{\frac 32}
 \ds\frac{k^{2\min\{r,s\}-2}}{r^{2s-2}}\|u\|^2_{H^{s+1}(0, t_n)}\\
 \le & C
 \ds\frac{k^{\min\{r-2,s-\frac 12\}+\min\{r,s\}}}{r^{2s-\frac 12}}\|u\|_{H^{s+1}(0,
 t_n)}.
 \end{aligned}
 \end{equation*}
This proves (\ref{cor-err-2}).
\end{proof}

\begin{rem}\label{quasi-super}
We note that,    the    estimate (\ref{cor-err-2}) can be read as
$$|e(t_n)|+ |e'(t_n)| \le C  k^{2r-2},\quad  1\le n\le N$$
for the $h$-version  (if $s\ge r$),  and
$$|e(t_n)|+ |e'(t_n)| \le C  r^{-(2s-\frac 12)}, \quad  1\le n\le N$$ for the
$p$-version as $r \rightarrow \infty$.
\end{rem}

\subsection{$L^{\infty}$-error estimate}

The aim of this section is to show the $L^{\infty}$-estimate. For
this purpose, we first recall  the following technique lemma from
\cite{SS}.

\begin{lemma}\label{inverse}
For any $\varphi \in P_{r_n}(I_n)$, there holds
\begin{eqnarray*}
\| \varphi \|^2_{L^\infty(I_n)}  \le C \left(\log(r_n+1)
\ds\int_{I_n} | \varphi'(t)|^2(t-t_{n-1})dt +  |\varphi(t_n)|^2
\right),
\end{eqnarray*}
 where  the constant $C>0$ is independent of $k_n$ and
$r_n$. Moreover, the estimate cannot be improved asymptotically as
$r_n \rightarrow \infty$.
\end{lemma}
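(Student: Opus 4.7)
My plan is to reduce the statement to a scale-invariant inequality on a reference interval and then exploit the Legendre structure of polynomials to extract the logarithmic factor. Under the affine change of variables $t = t_{n-1} + k_n \hat t$ mapping $I_n \to (0,1)$, both sides of the stated bound transform identically: the $L^\infty$-norm is preserved, $\varphi(t_n) = \hat\varphi(1)$, and $\int_{I_n} |\varphi'(t)|^2 (t-t_{n-1})\,dt = \int_0^1 |\hat\varphi'(\hat t)|^2 \hat t\, d\hat t$. Hence the estimate is completely scale-free in $k_n$, and it suffices to prove the analogous bound on $\Lambda = (-1,1)$ with weight $(1+x)$ and distinguished right endpoint $x = 1$.

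Writing $\hat\varphi(x) = \hat\varphi(1) + \psi(x)$ with $\psi \in P_r(\Lambda)$ satisfying $\psi(1)=0$ absorbs the boundary term on the right, so the problem reduces to
\begin{equation*}
\|\psi\|_{L^\infty(\Lambda)}^2 \le C\log(r+1)\int_{-1}^{1} |\psi'(s)|^2 (1+s)\,ds.
\end{equation*}
From $\psi(x) = -\int_x^1 \psi'(s)\,ds$ a plain Cauchy--Schwarz with weight $(1+s)$ gives $|\psi(x)|^2 \le \log\bigl(2/(1+x)\bigr)\int_{-1}^1|\psi'|^2(1+s)\,ds$, which already settles matters for $x$ bounded away from $-1$ but blows up at the endpoint. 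The polynomial structure of $\psi$ must therefore be invoked near $x=-1$. I would expand $\psi'(x) = \sum_{k=0}^{r-1} b_k L_k(x)$ and use the Bonnet three-term recurrence $(2k+1)\,x\,L_k = (k+1)L_{k+1} + k\,L_{k-1}$ to show that the weighted norm $\int_{-1}^1 |\psi'|^2(1+x)\,dx$ is a tridiagonal positive quadratic form bounded below by $c \sum_k b_k^2/(2k+1)$. Combining this with the closed form $\int_x^1 L_k(s)\,ds = (L_{k-1}(x)-L_{k+1}(x))/(2k+1)$ for $k\ge 1$, together with $\|L_k\|_{L^\infty}\le 1$, yields $|\psi(x)| \le \sum_{k=0}^{r-1} |b_k|\,|\int_x^1 L_k|$, and a Cauchy--Schwarz in the coefficients balanced so as to pair $b_k^2/(2k+1)$ against $(2k+1)\cdot(2k+1)^{-2}$ produces the harmonic sum $\sum_{k=1}^{r}1/k \sim \log(r+1)$, which is precisely the claimed factor.

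The main obstacle is the delicate choice of weights in the Cauchy--Schwarz splitting needed to recover a harmonic sum rather than a power of $r$: the natural weighted inner product for $\psi'$ is not diagonal in the Legendre basis, so one must control the tridiagonal cross terms carefully and balance weights so that the pairing with $(\int_x^1 L_k)^2$ gives exactly $1/k$ (not $1/k^2$ or $1$). The sharpness claim can then be confirmed by testing against an extremal family $\psi_r$ whose Legendre coefficients $b_k$ decay like $1/\sqrt{k}$: this keeps the weighted $L^2$-norm of $\psi_r'$ bounded while forcing $|\psi_r(-1)|^2 \sim \log r$, showing the logarithm cannot be removed as $r\to\infty$.
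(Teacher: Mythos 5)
Your scaling reduction and the away-from-endpoint estimate $|\psi(x)|^2\le \log\bigl(2/(1+x)\bigr)\int_{-1}^1(1+s)|\psi'|^2\,ds$ are correct, but the pivot of your argument --- the lower bound $\int_{-1}^1(1+x)|\psi'|^2\,dx\ge c\sum_k b_k^2/(2k+1)$ --- is false, and the endpoint treatment collapses with it. Writing $\psi'=\sum_{k=0}^{r-1}b_kL_k$ and normalizing $c_k=b_k\sqrt{2/(2k+1)}$, your weighted norm is the tridiagonal form $\sum_k c_k^2+2\sum_k\alpha_k c_kc_{k+1}$ with $\alpha_k=(k+1)/\sqrt{(2k+1)(2k+3)}$, and since $(2k+1)(2k+3)=(2k+2)^2-1<(2k+2)^2$ one has $\alpha_k>\tfrac12$ for every $k$. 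Taking the alternating vector $c_k=(-1)^k$, i.e.\ $b_k=(-1)^k\sqrt{(2k+1)/2}$, gives form value $r-2\sum_{k=0}^{r-2}\alpha_k<r-(r-1)=1$, while $\sum_k b_k^2/(2k+1)=r/2$. So the smallest eigenvalue of your form decays like $1/r$ rather than staying bounded below; the near-null vectors are exactly Christoffel--Darboux-type polynomials concentrated where the weight $(1+x)$ vanishes. No re-balancing of the Cauchy--Schwarz weights can rescue this, because the preceding triangle inequality $|\psi(x)|\le\sum_k|b_k|\,|\int_x^1L_k|$ is itself fatally lossy: for the same alternating family it yields $\sim\sqrt{r}$ although both $\|\psi\|_{L^\infty}$ and the weighted norm remain $O(1)$ --- the sign cancellation you discard is essential. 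A correct repair that stays within your scheme: your first display already gives $\|\psi\|_{L^\infty(-1+(r+1)^{-2},\,1)}\le C\sqrt{\log(r+1)}\,M$, and the remaining sliver $[-1,-1+(r+1)^{-2}]$ is absorbed by the Remez inequality, whose constant $T_r\bigl(1+2\delta/(2-\delta)\bigr)$ is absolutely bounded for $\delta=(r+1)^{-2}$; no coefficient analysis is then needed. (For reference, the paper does not prove this lemma at all --- it cites \cite{SS} --- so a self-contained proof would be welcome, but it must avoid this trap.)

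Your sharpness argument has a parallel flaw: since $\int_{-1}^1L_k\,ds=0$ for $k\ge1$, you have $\psi_r(-1)=-\int_{-1}^1\psi_r'=-2b_0$, so no decay pattern $b_k\sim k^{-1/2}$ can force $|\psi_r(-1)|^2\sim\log r$; and evaluating elsewhere does not help, since $\sum_k k^{-1/2}(2k+1)^{-1}<\infty$ keeps your family uniformly bounded. The genuine extremizer requires alternating coefficients in the Jacobi basis $P_k^{(0,1)}(x)=(L_k(x)+L_{k+1}(x))/(1+x)$: taking $\psi_r'=\sum_{k=0}^{r-1}(-1)^kP_k^{(0,1)}$, essentially the $(1+s)$-weighted projection of $(1+s)^{-1}$ onto $P_{r-1}(\Lambda)$, one computes $\int_{-1}^1(1+s)|\psi_r'|^2\,ds=\sum_{k=0}^{r-1}2/(k+1)\sim2\log r$ while $\psi_r(-1)=-\sum_{k=0}^{r-1}2/(k+1)\sim-2\log r$, so that $|\psi_r(-1)|^2\sim\log r\cdot\int_{-1}^1(1+s)|\psi_r'|^2\,ds$, which is the unimprovability claim. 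This example also illustrates concretely why your step (a) had to fail: the logarithm lives exactly in the interplay between the degenerating weight and alternating-sign coefficient sequences, which any diagonal lower bound on the weighted norm erases.
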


 The main results of this section are stated in the following theorem.

\begin{theorem}\label{hp-error-Linfty}
 Let $\mathcal{T}_h$ be an arbitrary partition of $I$,  $u$ be
the exact solution of (\ref{IVP}) and $U$ be the $C^1$-CPG solution
of (\ref{C1CPG-FEM}). Assume that  $f(t,u,u')$ is sufficiently
smooth with respect to the variables $t, u$ and $u'$ on
$\bar{I}\times \mathbb{R} \times \mathbb{R}$. We further assume that
$u \in W^{s+1,\infty}(I)$  with $s \ge \frac 32$. Then, for $k_n$
sufficiently small and $r_n\ge 3$, we have
\begin{eqnarray}\label{L-infty-main}
\|u-U\|^2_{L^\infty(I)} \le C \ds\frac{k^4
\log(\overline{r})}{\underline{r}^3}\ds\max_{1\le n\le N} \left\{
\ds\frac{k_n^{2\min\{r_n,s\}-2}}{r_n^{2s-2}}
  \|u\|^2_{W^{s+1,\infty}(I_n)} \right\},
\end{eqnarray}
where $\overline{r}=\max\limits_{1\le n \le N} \{r_n\}$,
$\underline{r}=\min\limits_{1\le n \le N} \{r_n\}$,
  and the constant $C>0$ is
independent of $k_n$ and $r_n$.

In particular,  if $\mathcal T_h$ is a quasi-uniform partition and
$r_n\equiv r$, then
\begin{eqnarray}\label{L-infty-cor}
 \|u-U\|_{L^\infty(I)} \le C (\log r)^{\frac 12}\ds\frac{k^{\min\{r,s\}+1}}{r^{s+1/2}}\|u\|_{W^{s+1,\infty}(I)},
 \end{eqnarray}
 where the   constant $C > 0$ is independent of $k$ and $r$.
 \end{theorem}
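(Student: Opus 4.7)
Following the pattern established for the $L^2$-, $H^1$-, and $H^2$-estimates, I would begin with the splitting $e = u - U = \eta + \xi$, where $\eta = u - \Pi^{{\bf{r}}}u$ and $\xi = \Pi^{{\bf{r}}}u - U \in S^{{\bf r},2}(\mathcal{T}_h)$. On each subinterval $I_n$ the approximation part $\eta$ is controlled directly by the $L^\infty$-projection estimate (\ref{Iu-infty-1}), which already contributes a term of exactly the $W^{s_n+1,\infty}$-type appearing on the right-hand side of (\ref{L-infty-main}).

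The core of the argument is to estimate the discrete error $\xi$, which on each $I_n$ is a polynomial of degree at most $r_n$. I would invoke the polynomial inverse inequality of Lemma \ref{inverse} to obtain
\begin{equation*}
\|\xi\|^2_{L^\infty(I_n)} \le C\Bigl( \log(r_n+1)\int_{I_n} |\xi'(t)|^2 (t-t_{n-1})\,dt + |\xi(t_n)|^2 \Bigr).
\end{equation*}
The weighted integral is bounded by $k_n \|\xi'\|^2_{L^2(I_n)}$, and the local stability estimate $\|\xi'\|^2_{L^2(I_n)} \le C k_n \|\eta\|^2_{H^1(0,t_n)}$ derived in the proof of Lemma \ref{main-lemma} (see the argument leading to (\ref{xi-19})) then yields the bound $C\log(r_n+1)\, k_n^2 \|\eta\|^2_{H^1(0,t_n)}$. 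Inserting the $H^1$-projection estimate (\ref{Iu-H1}) and passing from $H^{s+1}$- to $W^{s+1,\infty}$-regularity via $\|u\|^2_{H^{s+1}(I_i)} \le k_i \|u\|^2_{W^{s+1,\infty}(I_i)}$ converts the right-hand side into the desired maximum form. For the boundary term $|\xi(t_n)|^2 = |e(t_n)|^2$, I would invoke the nodal superconvergence estimate of Theorem \ref{hp-superconvergence}, which is strictly higher-order and is therefore absorbed by the logarithmically weighted main term.

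Collecting the $\eta$- and $\xi$-contributions and applying Stirling's formula to the ratio $\Gamma(r_n-s+1)/\Gamma(r_n+s-1)$ would produce estimate (\ref{L-infty-main}); the quasi-uniform specialization (\ref{L-infty-cor}) then follows immediately. The chief obstacle is the bookkeeping: one must thread together the two factors of $k_n$ coming from the weighted polynomial inverse inequality and the stability of $\xi'$, the polynomial decay in $r_n$ coming from the $H^1$-projection asymptotics, and the logarithmic factor from Lemma \ref{inverse}, so that the final bound collapses to the clean prefactor $k^4 \log(\bar r)/\underline r^3$ multiplying the maximum of the subinterval $W^{s+1,\infty}$-approximation terms. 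The logarithmic loss is sharp and intrinsic to the inverse inequality, as already noted in the statement of Lemma \ref{inverse}, and so cannot be removed by any refinement of the argument.
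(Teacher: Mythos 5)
Your overall architecture matches the paper's (splitting $e=\eta+\xi$, controlling $\eta$ by (\ref{Iu-infty-1}), applying Lemma \ref{inverse} to the polynomial $\xi$, and using nodal superconvergence for the boundary term $|\xi(t_n)|^2=|e(t_n)|^2$), but there is a genuine gap in the one step where the real work happens: your treatment of the weighted integral $\int_{I_n}(t-t_{n-1})|\xi'|^2\,dt$. Bounding it crudely by $k_n\|\xi'\|^2_{L^2(I_n)}$ and then invoking the stability bound $\|\xi\|^2_{H^1(I_n)}\le Ck_n\|\eta\|^2_{H^1(0,t_n)}$ from (\ref{xi-19}) yields, for $\|\xi\|^2_{L^\infty(I_n)}$, a term of order $\log(r)\,k_n^2\|\eta\|^2_{H^1}$. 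Tracking the orders in the quasi-uniform case ($\|\eta\|^2_{H^1}\sim k^{2\min\{r,s\}}/r^{2s}$, with the conversion $\|u\|^2_{H^{s+1}(I_i)}\le k_i\|u\|^2_{W^{s+1,\infty}(I_i)}$), this gives $\|\xi\|^2_{L^\infty}\lesssim \log(r)\,k^4/\underline{r}^2 \cdot \max_n\{k_n^{2\min\{r_n,s\}-2}r_n^{-(2s-2)}\|u\|^2_{W^{s+1,\infty}(I_n)}\}$ — one full power of $\underline{r}$ short of the claimed prefactor $k^4\log(\overline{r})/\underline{r}^3$ in (\ref{L-infty-main}). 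In norm this means you would only prove $\|u-U\|_{L^\infty(I)}\le C(\log r)^{1/2}k^{\min\{r,s\}+1}/r^{s}$ rather than the stated rate $r^{-(s+1/2)}$ in (\ref{L-infty-cor}). The $k$-order of your bound is correct; the loss is purely in $r$, and it is intrinsic to routing the estimate through the $H^1$-level quantity $\|\eta\|_{H^1}$, which is only $O(r^{-s})$.

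The paper avoids this by estimating the weighted integral sharply instead of by inverse/stability chaining: it selects the test function $\varphi=\pi^{r_n-2}((t_{n-1}-t)\xi)$ in the error equation (\ref{xi-1}) and integrates by parts, which expresses $\int_{I_n}(t-t_{n-1})|\xi'|^2\,dt$ through the nodal quantities $k_n^2|\xi'(t_n)|^2$, $|\xi(t_{n-1})|^2$, $|\xi(t_n)|^2$ — all superconvergent by (\ref{C1-con}) and Theorem \ref{hp-superconvergence}, since $\xi(t_n)=e(t_n)$ and $\xi'(t_n)=e'(t_n)$ — plus the product $k_n\|e\|_{H^1(I_n)}\|\xi\|_{L^2(I_n)}$, in which $\|\xi\|_{L^2}\le\|e\|_{L^2}+\|\eta\|_{L^2}$ is then bounded by the \emph{optimal $L^2$-level} estimates of Theorem \ref{glo-L2-error} and (\ref{Iu-L2}). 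Because $\|e\|_{L^2}$ and $\|\eta\|_{L^2}$ carry an extra factor $k/r$ relative to the $H^1$ quantities, the product $k\|e\|_{H^1}\|e\|_{L^2}$ is of order $k^{2\min\{r,s\}+2}/r^{2s+1}$, recovering the missing power of $r$. So the essential idea your proposal is missing is this duality-flavored use of Galerkin orthogonality inside the weighted inverse inequality; without it, or an equivalent mechanism that brings $L^2$-accurate quantities into the bound for $\xi$, the theorem as stated is out of reach of your argument. Your final remark that the logarithmic factor is sharp and unavoidable is consistent with Lemma \ref{inverse} and with the paper.
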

\begin{proof}
Recalling that $e=\eta + \xi$ with   $\eta =u-\Pi^{\bf{r}}u$ and
$\xi =\Pi^{\bf{r}}u-U$,  selecting $\varphi=\pi^{r_n-2}
((t_{n-1}-t)\xi)$ in (\ref{xi-1}), then using  (\ref{L2-projc}), (\ref{Lip-con})  and the $L^2$-stability of the $L^2$-projection
operator $\pi^{r_n-2}$, gives
\begin{eqnarray}\label{Linfty-1}
\begin{aligned}
\ds\int_{I_n} (t_{n-1}-t)\xi\xi''   dt = & \ds\int_{I_n}\left(f(t,
u, u')-f(t,
U, U')\right) \pi^{r_n-2} ((t_{n-1}-t)\xi) dt\\
\le& L\left(\ds\int_{I_n}
\left(|u-U|+|u'-U'|\right)^2dt\right)^{\frac{1}{2}}
\left(\ds\int_{I_n} \left|\pi^{r_n-2} ((t_{n-1}-t)\xi)\right|^2dt\right)^{\frac{1}{2}}\\
\le &\sqrt{2}L\|e\|_{H^1(I_n)}\|\pi^{r_n-2}
((t_{n-1}-t)\xi)\|_{L^2(I_n)}\\
\le & \sqrt{2}L\|e\|_{H^1(I_n)}\|(t_{n-1}-t)\xi\|_{L^2(I_n)}\\
\le &\sqrt{2}Lk_n\|e\|_{H^1(I_n)}\|\xi\|_{L^2(I_n)}.
\end{aligned}
\end{eqnarray}
On the other hand, by integration by parts  we have
\begin{eqnarray*}
\begin{aligned}
\ds\int_{I_n} (t_{n-1}-t)\xi\xi''   dt = & - k_n\xi(t_n)\xi'(t_n) +
\ds\int_{I_n}\xi'\xi dt -\ds\int_{I_n} (t_{n-1}-t)|\xi'|^2 dt\\
=& - k_n\xi(t_n)\xi'(t_n)
+\ds\frac{1}{2}|\xi(t_n)|^2-\ds\frac{1}{2}|\xi(t_{n-1})|^2
+\ds\int_{I_n} (t-t_{n-1})|\xi'|^2 dt,
\end{aligned}
\end{eqnarray*}
which together with (\ref{Linfty-1})  and the inequality
$k_n|\xi(t_n) \xi'(t_n)|\le
\frac{1}{2}k_n^2|\xi'(t_n)|^2+\frac{1}{2}|\xi(t_n)|^2$ leads to
\begin{eqnarray}  \label{Linfty-2}
\begin{aligned}
\ds\int_{I_n} (t-t_{n-1})|\xi'|^2   dt
\leq&k_n|\xi(t_n) \xi'(t_n)|-\ds\frac{1}{2}|\xi(t_n)|^2+\ds\frac{1}{2}|\xi(t_{n-1})|^2\\
&+\sqrt{2}Lk_n\|e\|_{H^1(I_n)}\| \xi\|_{L^2(I_n)}\\
\leq&\ds\frac{1}{2}k_n^2|\xi'(t_n)|^2+\ds\frac{1}{2}|\xi(t_{n-1})|^2\\
&+
\sqrt{2}Lk_n\|e\|_{H^1(I_n)}\left(\|e\|_{L^2(I_n)}+\|\eta\|_{L^2(I_n)}\right),
\end{aligned}
\end{eqnarray}
where we have used the fact
$\|\xi\|_{L^2(I_n)}=\|e-\eta\|_{L^2(I_n)}\le
\|e\|_{L^2(I_n)}+\|\eta\|_{L^2(I_n)}$.

Due to (\ref{C1-con}), we find that
\begin{eqnarray}  \label{Linfty-3}
\xi(t_n)=e(t_n)-\eta(t_n)=e(t_n) \quad \mbox{and}  \quad
\xi'(t_n)=e'(t_n)-\eta'(t_n)=e'(t_n).
\end{eqnarray}
Thanks to Lemma \ref{inverse}, by (\ref{Linfty-2}) and
(\ref{Linfty-3}), we have
\begin{equation}  \label{Linfty-4}
\begin{aligned}
\|\xi\|^2_{L^{\infty}(I_n)} \le & C
\log(r_n+1)\left(k_n^2|\xi'(t_n)|^2+|\xi(t_{n-1})|^2\right)+C|\xi(t_n)|^2\\
&+ C\log(r_n+1)k_n\|e\|_{H^1(I_n)}\left( \|e\|_{L^2(I)} + \|\eta\|_{L^2(I)} \right)   \\
\le& C \log(r_n+1) \max_{1\le n\le
N}\left\{|e'(t_n)|^2+|e(t_{n})|^2\right\}\\
&+ Ck_n\log(r_n+1)
\|e\|_{H^1(I)}\left( \|e\|_{L^2(I)} + \|\eta\|_{L^2(I)} \right).
\end{aligned}
\end{equation}

For convenience, we set $G:=\ds\sum_{n=1}^N
\Big(\ds\frac{k_n}{2}\Big)^{2\min\{r_n,s\}-2}
\ds\frac{\Gamma(r_n-s+1)}{\Gamma(r_n+s-1)}\|u\|^2_{H^{s+1}(I_n)}.$
Using the Stirling's formula, we have
\begin{equation}  \label{G-est}
\begin{aligned}
G\le & \ds C\sum_{n=1}^N
\ds\frac{k_n^{2\min\{r_n,s\}-2}}{r_n^{2s-2}}
  \|u\|^2_{H^{s+1}(I_n)}  \le C\ds\sum_{n=1}^N \ds\frac{k_n^{2\min\{r_n,s\}-1}}{r_n^{2s-2}}
  \|u\|^2_{W^{s+1,\infty}(I_n)}\\ \le & C\max_{1\le n\le N} \left\{ \ds\frac{k_n^{2\min\{r_n,s\}-2}}{r_n^{2s-2}}
  \|u\|^2_{W^{s+1,\infty}(I_n)} \right\}.
  \end{aligned}
\end{equation}
Inserting (\ref{super-nodes}), (\ref{Iu-L2}), (\ref{cor-err-H1-1})
and (\ref{L2-error}) into (\ref{Linfty-4}),  then using
(\ref{G-est}), gives
\begin{equation}
  \begin{aligned}\label{Linfty-5}
\max\limits_{1\le n\le N}\left\{\|\xi\|^2_{L^{\infty}(I_n)}\right\}
\le& C  \log(\overline{r}+1) \left( \max\limits_{1\leq n\leq N}
\left\{ \ds\frac{k_n^{2\min\{r_n-2,s\}+2}}{r_n^{2(s+1)}} \right\} G+
 \max\limits_{1\le n\le N} \Big\{
\Big(\ds\frac{k_n}{r_n}\Big)^{3} \Big\} G^2 \right)  \\
&+Ck\log(\overline{r}+1)\max\limits_{1\le n\le N} \Big\{
\Big(\ds\frac{k_n}{r_n}\Big)^{2} \Big\}
G^{1/2}\times\left\{\max\limits_{1\le n\le N} \Big\{
\Big(\ds\frac{k_n}{r_n}\Big)^{2} \Big\} G^{1/2}
 \right.\\
&+\left. \max\limits_{1\le n\le N} \Big\{
\Big(\ds\frac{k_n}{r_n}\Big)^{\frac 32} \Big\}  G
 + \max\limits_{1\le n\le N} \Big\{
\Big(\ds\frac{k_n}{r_n}\Big)^{2} \Big\} G^{1/2}
\right\} \\
\le & C  \log(\overline{r}+1) \left(
\ds\frac{k^{2\min\{r_n-2,s\}+2}}{{\underline{r}}^{2(s+1)}} +
 \ds\frac{k^3}{\underline{r}^3}   G  +\ds\frac{k^5}{\underline{r}^4}   + \ds\frac{k^{9/2}}{\underline{r}^{7/2}}
G^{1/2}\right) G\\
\le  & C  \frac{k^4 \log(\overline{r})}{\underline{r}^4}\max_{1\le
n\le N} \left\{ \ds\frac{k_n^{2\min\{r_n,s\}-2}}{r_n^{2s-2}}
  \|u\|^2_{W^{s+1,\infty}(I_n)} \right\}
\end{aligned}
\end{equation}
for $s\ge \frac 32$,
where $\overline{r}=\max\limits_{1\le n \le N} \{r_n\}$ and
$\underline{r}=\min\limits_{1\le n \le N} \{r_n\}$.  Moreover, using
(\ref{Iu-infty-1}) and the Stirling's formula,  we have
\begin{equation}  \label{Linfty-6}
\begin{aligned}
\max\limits_{1\le n\le N}\left\{\|\eta\|^2_{L^{\infty}(I_n)}\right\}
\le & C  \max\limits_{1\le n\le N}
\left\{\Big(\frac{k_n}{2}\Big)^{2\min\{r_n,s\}+2} \ds\frac{
\Gamma(r_n-s+1)}{\Gamma(r_n+s-1)}\ds\frac{1}{(r_n-2)^3}\|u\|^2_{W^{s+1,\infty}(I_n)}
\right\}\\ \le & C \frac{k^4}{\underline{r}^3} \max\limits_{1\le
n\le N} \left\{ \ds\frac{k_n^{2\min\{r_n,s\}-2}}{r_n^{2s-2}}
\|u\|^2_{W^{s+1,\infty}(I_n)}\right\}.
\end{aligned}
\end{equation}
Combining (\ref{Linfty-5}) and (\ref{Linfty-6}),  we get
\begin{eqnarray*}\label{Linfty-7}
\begin{aligned}
\|u-U\|^2_{L^{\infty}(I)}\le&
2\|\eta\|^2_{L^{\infty}(I)}+2\|\xi\|^2_{L^{\infty}(I)} \le
C\frac{k^4 \log(\overline{r})}{\underline{r}^3}\max_{1\le n\le N}
\left\{ \ds\frac{k_n^{2\min\{r_n,s\}-2}}{r_n^{2s-2}}
  \|u\|^2_{W^{s+1,\infty}(I_n)} \right\}.
 \end{aligned}
\end{eqnarray*}
This proves (\ref{L-infty-main}). Furthermore, using the
quasi-uniformity of the time partition and the assumption $r_n\equiv
r$,
  we immediately get (\ref{L-infty-cor}) from (\ref{L-infty-main}).
\end{proof}

\section{Application to nonlinear  wave  equations}\label{sec4}

In this section,  we apply the method presented in Section
\ref{sec2} to nonlinear wave equations.  More precisely,
  we shall use the $hp$-version  $C^1$-CPG   time stepping method   to
handle the time integration of the second-order nonlinear
differential system arising after space discretization obtained with
the usual spectral Galerkin or conforming finite element Galerkin
method.

\subsection{Model problem}

Let $I=(0,T)$ be a finite time interval and  $\Omega\in
\mathbb{R}^d$ ($d=1, 2, 3$) be   an open and bounded domain with
  boundary $\partial\Omega$. We consider the nonlinear
wave equation
\begin{equation}\label{wave-eqs}
\left\{
\begin{aligned}
\pd_{tt}u-\nabla \cdot (b \nabla u) & =  f(u)     && \mbox{in}~~\Omega \times I,\\
u & =  0     \quad &&\mbox{on}~~\partial\Omega \times I,\\
 u(\cdot,0) & =  u_0     \quad && \mbox{in}~~\Omega,\\
 \pd_tu(\cdot,0) & =  u_1   \quad  &&\mbox{in}~~\Omega,
\end{aligned}
\right.
\end{equation}
where   $u_0\in H_0^1(\Omega)$ and $u_1\in L^2(\Omega)$ are
prescribed initial conditions,  $f(u)=f(\mathbf{x},t,u)$ is a given
function that depending on the unknown function $u(\mathbf{x},t)$
 with $\mathbf{x}=(x_1,\cdots,x_d) \in
\mathbb{R}^d$.
 Moreover, we assume that $b$ is a piecewise smooth
function and there exist two  positive constants $b_*$ and $b^*$
such that
$$0< b_* \le b(\mathbf{x}) \le b^* < \infty, \quad \forall\mathbf{x}\in \overline{\Omega}.$$

The   weak formulation of the problem  (\ref{wave-eqs}) is to find
$u\in L^2(I; H_0^1(\Omega))$  with $\pd_tu \in L^2(I; L^2(\Omega))$
and $\pd_{tt}u \in L^2(I; H^{-1}(\Omega))$, such that $u(\cdot,0) =
u_0$,  $\pd_tu(\cdot,0) = u_1$, and
\begin{equation}\label{wave-weak}
 \langle \pd_{tt}u, \varphi \rangle + (b \nabla u, \nabla \varphi)=(f(u),
\varphi),\quad \forall \varphi\in H_0^1(\Omega)\quad  \mbox{a.e.}~~
\mbox{in}~~I,
\end{equation}
where $ \langle \cdot,\cdot\rangle $ denotes the duality pairing
between $H^{-1}(\Omega)$ and  $H_0^1(\Omega)$,  $(\cdot,\cdot)$
denotes the inner product in  $L^2(\Omega)$, and  $L^2(I; V)$
  denotes the Bochner space of $V$-valued  functions with $V=H_0^1(\Omega), L^2(\Omega)$ and
  $H^{-1}(\Omega)$, respectively.

\subsection{Galerkin semi-discretization in
space}\label{subsec-4-2}

We shall use the usual spectral Galerkin or  conforming finite
element Galerkin method to discrete the problem (\ref{wave-eqs}) in
space. For details about the theory of the spectral Galerkin and
finite element methods, we refer to \cite{STW,Bre-Sco,Ciarlet78}, respectively.

Let $V_h \subset H_0^1(\Omega)$ be an  approximation  space used for
the spectral Galerkin or  conforming finite element Galerkin method.
We consider the semi-discretized Galerkin approximation of
(\ref{wave-eqs}): find $u_h: V_h \times \bar{I} \rightarrow
\mathbb{R}$ such that
\begin{equation}\label{parabolic-semi}
(\partial_{tt} u_h, \varphi_h)+ (b \nabla u_h, \nabla
\varphi_h)=(f(u_h), \varphi_h),\quad \forall \varphi_h\in V_h,
~~t\in I,
\end{equation}
$u_h(\cdot,0) = u_{0,h}$  and  $\partial_t u_h(\cdot,0) = u_{1,h}$,
where $u_{0,h} \in V_h $ and $ u_{1,h}\in V_h$ are  suitable
approximations (such as the Lagrange interpolation  or
$L^2$-projection) of the initial conditions $u_0$ and $u_1$.

Suppose that $ \{\varphi_i\}_{i=1}^M$ is a basis of the finite
dimensional subspace $V_h$. We can expand the semi-discrete solution
$u_h$ as  $$ u_h=\sum_{i=1}^M \alpha_i(t)\varphi_i(\mathbf{x}).$$
Inserting the above expansion into (\ref{parabolic-semi}) leads to
the following nonlinear second-order ODE system:
  find  $\alpha_i(t),~ 1\le i\le M$,  such that
\begin{equation}\label{parabolic-semi-1}
\ds\sum_{i=1}^M \alpha_i''(t)(\varphi_i, \varphi_j)+\sum_{i=1}^M
\alpha_i(t) (b \nabla \varphi_i, \nabla \varphi_j)
=\left(f\left(\sum_{i=1}^M \alpha_i(t)\varphi_i\right),
\varphi_j\right),\quad j=1,2,\cdots, M,
\end{equation}
 $\alpha_i(0)=\hat u_{0,i}$ and  $\alpha_i'(0)=\hat u_{1,i}$, where $\hat u_{0,i}$ and  $\hat u_{1,i}$ are expansion coefficients of the given initial
approximations $u_{0,h}=\sum\limits_{i=1}^M \hat
u_{0,i}\varphi_i(\mathbf{x})$ and $u_{1,h}=\sum\limits_{i=1}^M \hat
u_{1,i}\varphi_i(\mathbf{x})$.

For convenience, we set $\boldsymbol{\alpha}(t)=\left(\alpha_1(t),
\alpha_2(t), \cdots, \alpha_M(t)\right)^T$,
$\boldsymbol{\alpha_0}=\left(\hat u_{0,1}, \hat u_{0,2}, \cdots,
\hat u_{0,M}\right)^T$ and $\boldsymbol{\alpha_1}=\left(\hat
u_{1,1}, \hat u_{1,2}, \cdots, \hat u_{1,M}\right)^T$. Then, the
nonlinear system (\ref{parabolic-semi-1}) can be expressed as
\begin{equation}\label{IVP-system1}
\left\{
\begin{array}{l}
\mathcal{B} \boldsymbol{\alpha}''(t)+ \mathcal{D}
\boldsymbol{\alpha}(t)= \boldsymbol{\mathcal{F}}(\boldsymbol{\alpha}(t)) ,\quad t\in
I,\\[2mm]
\boldsymbol{\alpha}(0)=\boldsymbol{\alpha_0}, \quad
\boldsymbol{\alpha}'(0)=\boldsymbol{\alpha_1},
\end{array}
\right.
\end{equation}
where $\mathcal{B}=(b_{ij})_{i,j=1}^M$ is the mass matrix with
entries $b_{ij}=(\varphi_i, \varphi_j)$,
$\mathcal{D}=(d_{ij})_{i,j=1}^M$ is the stiffness matrix with
entries $d_{ij}= (b\nabla \varphi_i, \nabla \varphi_j)$, and
$\boldsymbol{\mathcal{F}}(\boldsymbol{\alpha}(t))=(f_j(\boldsymbol{\alpha}(t)))_{j=1}^M$
is a vector function with
$f_j(\boldsymbol{\alpha}(t))=\big(f(\sum_{i=1}^M
\alpha_i(t)\varphi_i), \varphi_j\big)$.

Since the mass matrix $\mathcal{B}$ is  positive definite and
invertible, we can further rewrite the   system (\ref{IVP-system1})
as the following nonlinear second-order ODE system
\begin{equation}\label{IVP-system2}
\left\{
\begin{array}{l}
 \boldsymbol{\alpha}''(t)=\boldsymbol{\widetilde{\mathcal{F}}}(t,\boldsymbol{\alpha}(t)),\quad t\in
 I,\\[2mm]
\boldsymbol{\alpha}(0)=\boldsymbol{\alpha_0}, \quad
\boldsymbol{\alpha}'(0)=\boldsymbol{\alpha_1},
\end{array}
\right.
\end{equation}
where
$\boldsymbol{\widetilde{\mathcal{F}}}(t,\boldsymbol{\alpha}(t)) := -
\mathcal{B}^{-1} \mathcal{D}\boldsymbol{\alpha}(t)+
\mathcal{B}^{-1}\boldsymbol{\mathcal{F}}(\boldsymbol{\alpha}(t))$.

\subsection{Fully discrete scheme}\label{sec4-3}

To discretize (\ref{IVP-system2}) in time, we employ the $hp$-version
of the $C^1$-CPG  time stepping method  as  introduced in Section
\ref{sec2}.

Given an arbitrary partition of   $[0,T]$ with subintervals
$\{I_n=(t_{n-1},t_n)\}_{n=1}^N$. We denote by $P_{r_n}(I_n,
\mathbb{R}^M)$  the set of all polynomials of degree at most $r_n$
on $I_n$ with coefficients in $\mathbb{R}^M$.

The $hp$-version of the $C^1$-CPG time stepping method for
(\ref{IVP-system2}) can be read as: if $\boldsymbol{\alpha}_{hp}$ is
given on the time intervals $I_m,\ 1\leq m\leq n-1$, we can find
$\boldsymbol{\alpha}_{hp}|_{I_n} \in P_{r_n}(I_n, \mathbb{R}^M)$
with $r_n\ge 2$ on the next time step $I_n$ by solving
\begin{equation}\label{ODE-sys-CPG}
\left\{
\begin{aligned}
&\int_{I_n} \left(\boldsymbol{\alpha}_{hp}'', \varphi\right)
dt=\int_{I_n}
\left(\boldsymbol{\widetilde{\mathcal{F}}}(t,\boldsymbol{\alpha}_{hp}),
\varphi\right) dt, \quad \forall \varphi \in P_{r_n-2}(I_n,
\mathbb{R}^M),\\[2mm]
&\boldsymbol{\alpha}_{hp}|_{I_n}(t_{n-1})=\boldsymbol{\alpha}_{hp}|_{I_{n-1}}(t_{n-1}),
\quad
\boldsymbol{\alpha}_{hp}'|_{I_n}(t_{n-1})=\boldsymbol{\alpha}_{hp}'|_{I_{n-1}}(t_{n-1})
\end{aligned}
\right.
\end{equation}
  Here, we denote by $(\cdot,\cdot)$  the standard  Euclidean inner
  product in $\mathbb{R}^M$, and  we set
  $\boldsymbol{\alpha}_{hp}|_{I_1}(t_0)=\boldsymbol{\alpha_0}$ and
  $\boldsymbol{\alpha}_{hp}'|_{I_1}(t_0)=\boldsymbol{\alpha_1}$.

Suppose that we have  obtained $\boldsymbol{\alpha}_{hp}$ from
(\ref{ODE-sys-CPG}). Then, the  fully
 discrete Galerkin approximation (denoted by $u_{h\tau}$)  for
(\ref{wave-eqs}) can be  expressed as
$$u_{h\tau}=\boldsymbol{\phi}\boldsymbol{\alpha}_{hp},$$ where
$\boldsymbol{ \phi}:=(\varphi_1, \varphi_2, \cdots, \varphi_M)$  is
the basis of the  space $V_h$.

\section{Numerical experiments}\label{sec5}

In this section, we present some numerical results to highlight the
performance of the $hp$-version $C^1$-CPG method. Throughout this
section, we use  uniform time partition (with uniform step-size $k$)
associated with uniform approximation degree $r$ for the $h$- and
$p$-versions of the $C^1$-CPG methods. Moreover, we
employ the simple fixed point iteration method to solve the
nonlinear system of the form (\ref{nonlinear-sys}) very accurately.

\subsection{Example 1: a nonlinear scalar problem}

We consider the nonlinear second-order IVP:
\begin{equation}\label{ex1}
 \left\{\begin{array}{ll} u''(t)=\sin(u(t))-2\cos(u'(t))+g(t),\quad & t\in [0,1],
\\[5pt]u(0)=0,\quad u'(0)=1,  \end{array}\right.
\end{equation}
where $g(t)$ is chosen such that the  exact solution $u=\sin t$.
Clearly, $u$ is analytic in $[0,1]$.

\begin{figure}[h!]
\begin{minipage}[h]{0.48\linewidth}
\centering
  \includegraphics [height=2.2in]{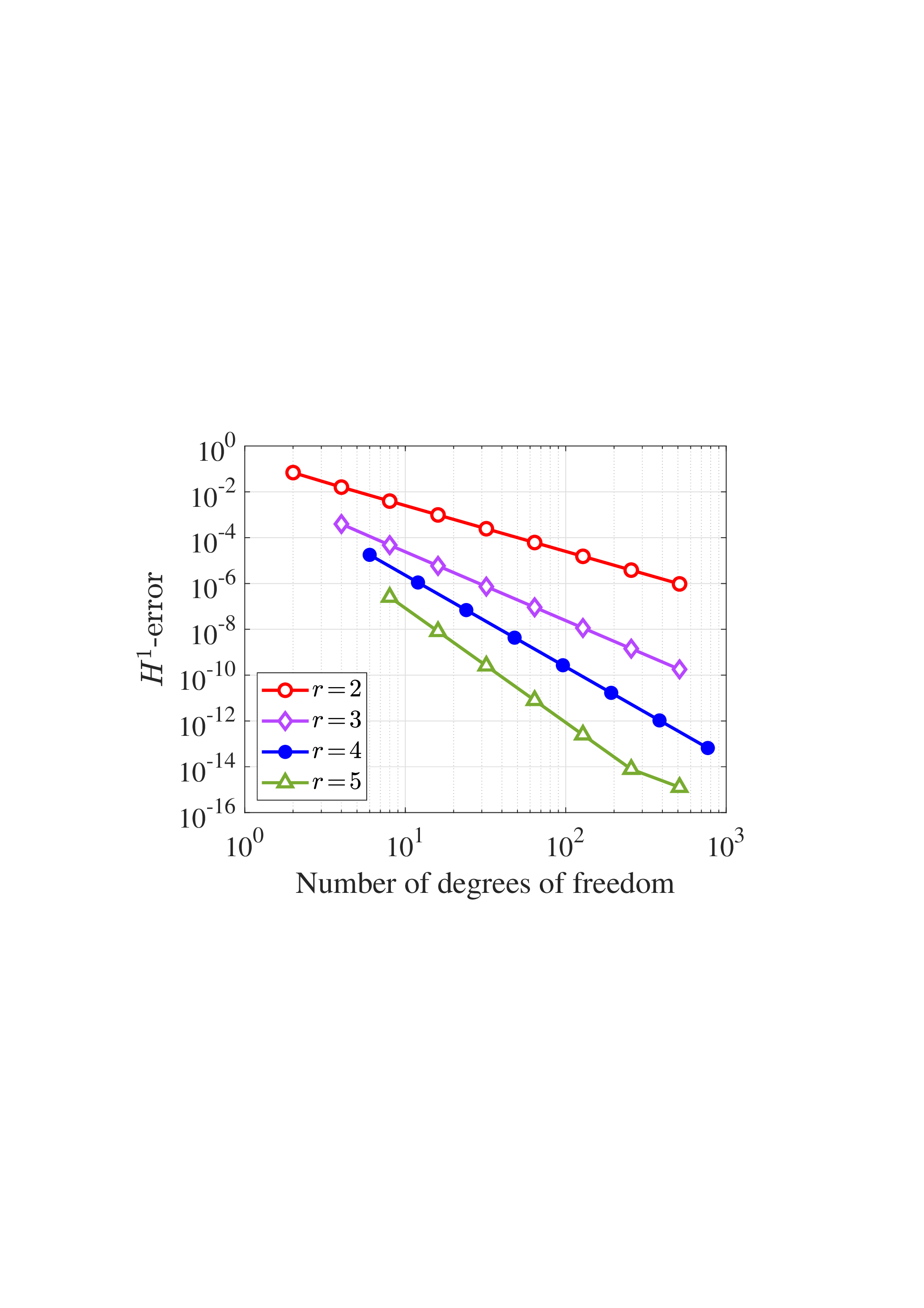}
    \caption{Example 1: $H^1$-errors of the $h$-version.}
    \label{ex1-H1-h-version}
\end{minipage}
  \hfill\quad
\begin{minipage}[h]{0.48\linewidth}
\centering
   \includegraphics [height=2.2in]{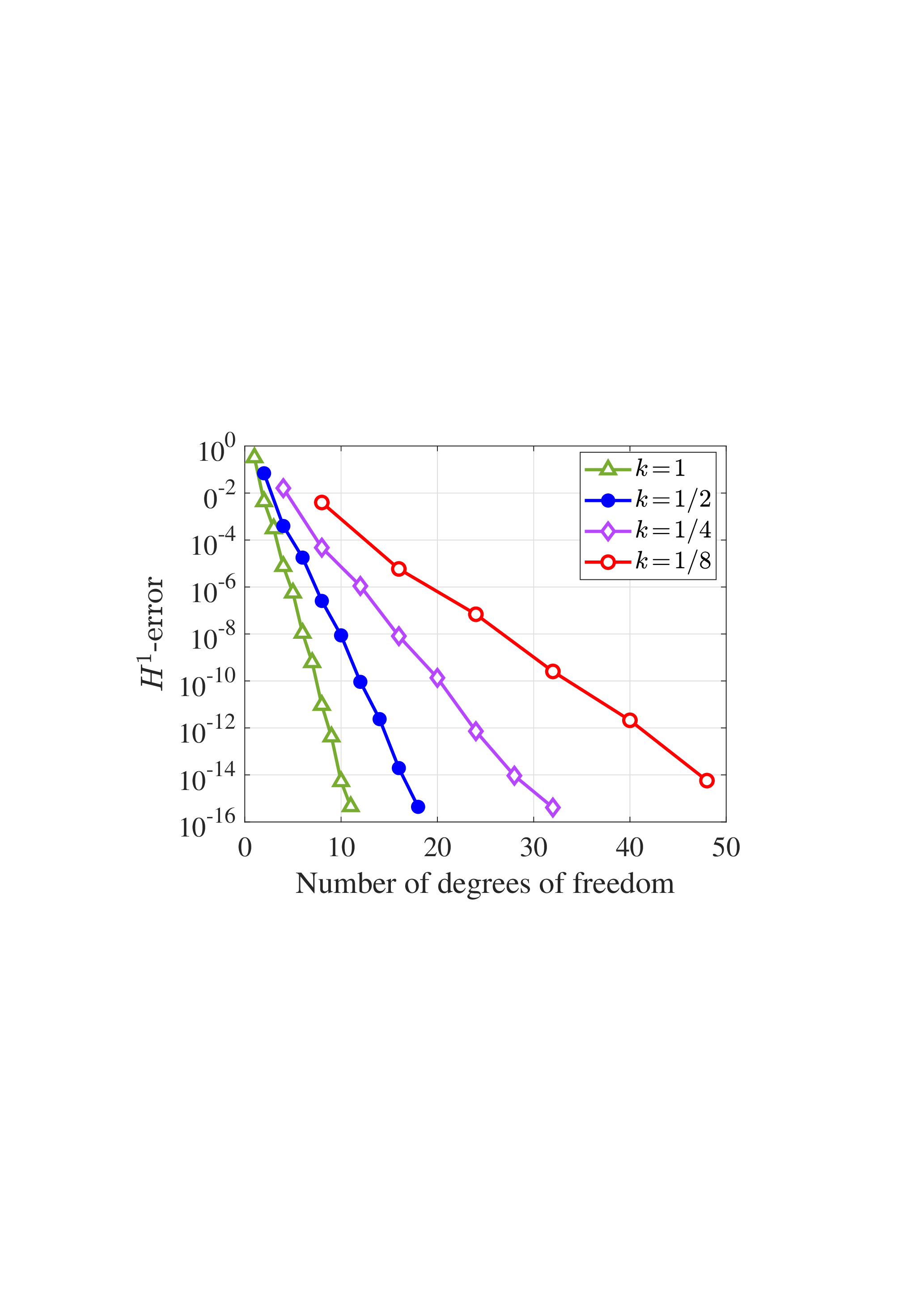}
   \caption{Example 1: $H^1$-errors of the $p$-version.}
   \label{ex1-H1-p-version}
\end{minipage}
\end{figure}

We first consider  the performance of the  $h$- and $p$-versions of
the $C^1$-CPG method for problem (\ref{ex1}), respectively. We use
uniform time partitions with step-size $k$ and uniform approximation
degrees $r$. In Figure \ref{ex1-H1-h-version}, we plot the
$H^1$-errors  against the total number of degrees of freedom (DOF)
in a log-log scale for different $r$.   We observe that the
convergence is algebraic and in accordance with the theoretical
result, i.e., of order $r$ for the $H^1$-errors.  Figure
\ref{ex1-H1-p-version} displays the $H^1$-errors  of the $p$-version
(on fixed   time partitions with $1, 2, 4, 8$ uniform time steps).
It can be seen that exponential convergence is achieved for each
partition as $r$ increases. In particular, we note that the global
$H^1$-error of $10^{-15}$ can be achieved with less than $12$ DOF
for the $p$-version, while this is not possible for the $h$-version
as shown in Figure \ref{ex1-H1-h-version}. Therefore,   for smooth
solutions it is more advantageous to use the $p$-version (i.e.,
increase $r$) rather than to use the $h$-version (i.e., reduce $k$
at fixed low $r$).

\begin{table}[htp]
\tabcolsep 3pt \caption{Example 1: numerical errors and convergence
orders of the $h$-version.}\label{table1}
\begin{center}
\begin{tabular}{|c|c|cc|cc|cc|cc|cc|}
\hline
  $~~r~~$ &   $~~k~~$&$\|e\|_{L^2(I)}$& order & $\|e\|_{H^1(I)}$& order &$\|e\|_{H^2(I)}$&order     & $\|e\|_{L^\infty(I)}$& order &$\|e'\|_{L^\infty(I)}$&order   \\ \hline
\multirow{3}{*}{2}&1/64   &2.41e-05 &2.00 &6.14e-05   &2.00   &3.85e-03&1.00    &5.10e-05 &2.00     &1.02e-04&1.99 \\
                  &1/128  &6.02e-06 &2.00 &1.53e-05   &2.00   &1.92e-03&1.00  &1.28e-05 &2.00     &2.56e-05&1.99\\
                  &1/256  &1.50e-06 &2.00 &3.83e-06   &2.00   &9.62e-04&1.00   &3.19e-06 &2.00     &6.42e-06&2.00\\ \hline
\multirow{3}{*}{3}&1/32   &1.63e-09 &4.00 &9.16e-08   &3.00   &1.90e-05&2.00   &4.20e-09 &3.96     &2.09e-07&3.02 \\
                  &1/64   &1.02e-10 &4.00 &1.15e-08   &3.00   &4.75e-06&2.00    &2.66e-10 &3.98     &2.59e-08&3.01\\
                  &1/128  &6.37e-12 &4.00 &1.43e-09   &3.00   &1.19e-06&2.00    &1.67e-11 &3.99     &3.22e-09&3.01\\ \hline
\multirow{3}{*}{4}&1/16   &4.08e-11 &5.01 &4.32e-09   &4.00   &6.56e-07&3.00  &7.14e-11 &5.03     &7.94e-09&4.00 \\
                  &1/32   &1.27e-12 &5.00 &2.70e-10   &4.00   &8.20e-08&3.00   &2.21e-12 &5.01     &4.97e-10&4.00 \\
                  &1/64   &3.98e-14 &5.00 &1.69e-11   &4.00   &1.02e-08&3.00   &6.88e-14 &5.01     &3.10e-11&4.00 \\ \hline
\multirow{3}{*}{5}&1/8    &3.41e-12 &6.00 &2.54e-10   &5.00   &2.53e-08&4.00    &9.50e-12 &5.91     &5.78e-10&4.93\\
                  &1/16   &5.34e-14 &6.00 &7.96e-12   &5.00   &1.58e-09&4.00     &1.52e-13 &5.96     &1.85e-11&4.97\\
                  &1/32   &9.28e-16 &5.85 &2.49e-13   &5.00   &9.88e-11&4.00    &2.33e-15 &6.03     &5.85e-13&4.98\\ \hline
\end{tabular}
\end{center}
\end{table}

In Table \ref{table1}, we also list the numerical errors (in
different norms) and convergence orders  of the $h$-version
$C^1$-CPG method. The results indicate the convergence orders
  $$ \|e\|_{H^1(I)}=O (k^{r}), \quad  \|e\|_{H^2(I)}=O (k^{r-1}), \quad  r\ge 2,$$
and
\begin{equation*}
\|e\|_{L^2(I)} = \left\{\
\begin{aligned}
&O(k^{r}) \quad\quad \mbox{if} ~r=2,\\
 & O (k^{r+1})  \quad \mbox{if}
~r\ge 3,
\end{aligned}
\right.\\
\quad\quad \|e\|_{L^\infty(I)} = \left\{\
\begin{aligned}
&O(k^{r}) \quad\quad \mbox{if} ~r=2,\\
 & O (k^{r+1})  \quad \mbox{if}
~r\ge 3,
\end{aligned}
\right.\\
\end{equation*}
which confirm the theoretical results well for $r\ge 3$. We note
that,  for $r=2$ the convergence orders of the $L^2$- and
$L^\infty$-errors are only $O(k^2)$. Hence, it seems that $r=2$ is
not a good choice for the $h$-version if we are interested in the
$L^2$- and $L^\infty$-errors.

\begin{figure}[h!]
\begin{minipage}[h]{0.48\linewidth}
\centering
  \includegraphics [height=2.2in]{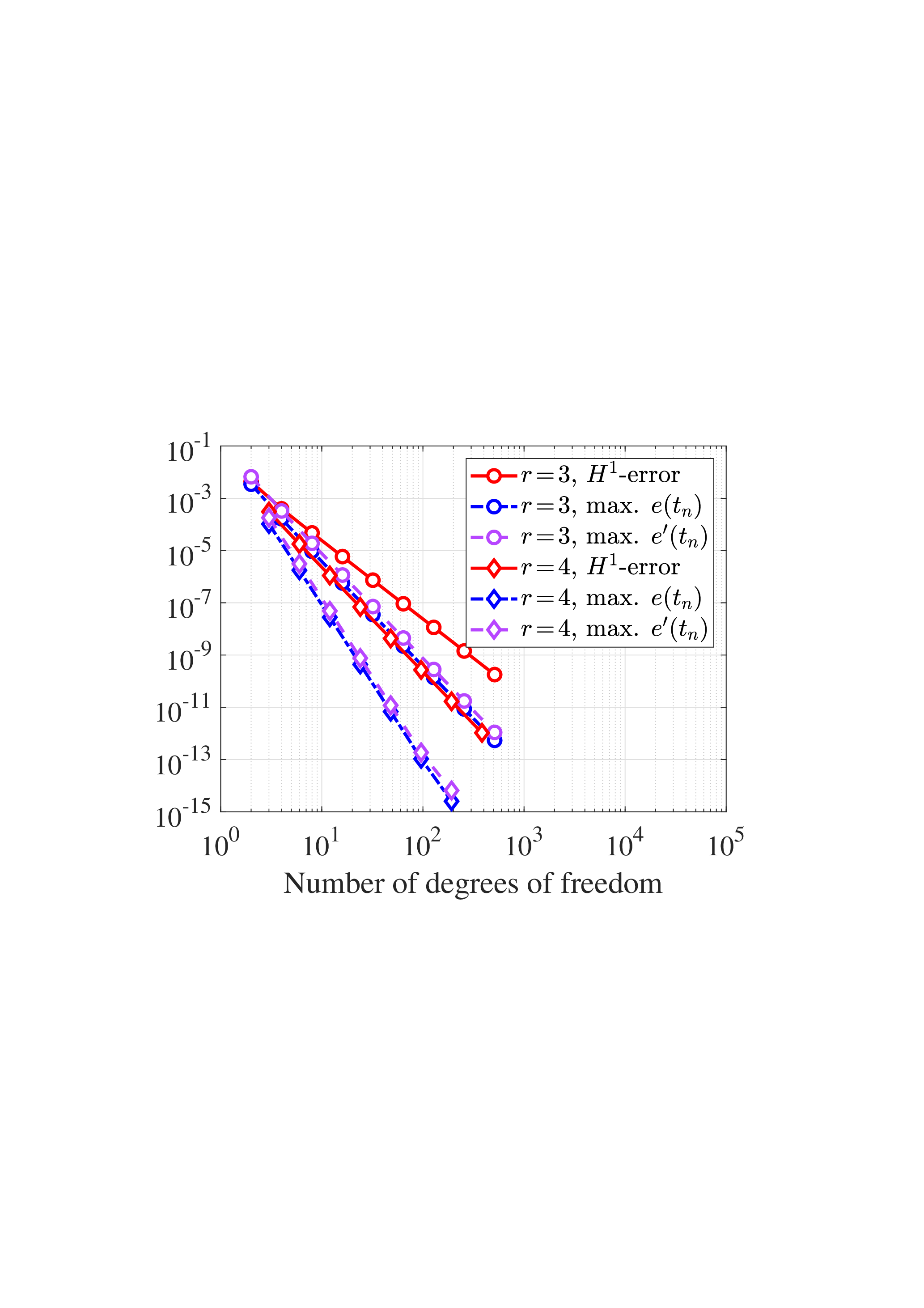}
    \caption{Example 1:  maximum function and derivative approximation  errors  at nodes versus $H^1$-errors of the $h$-version.}
    \label{ex1-super-error-h-version}
\end{minipage}
  \hfill\quad
\begin{minipage}[h]{0.48\linewidth}
\centering
   \includegraphics [height=2.2in]{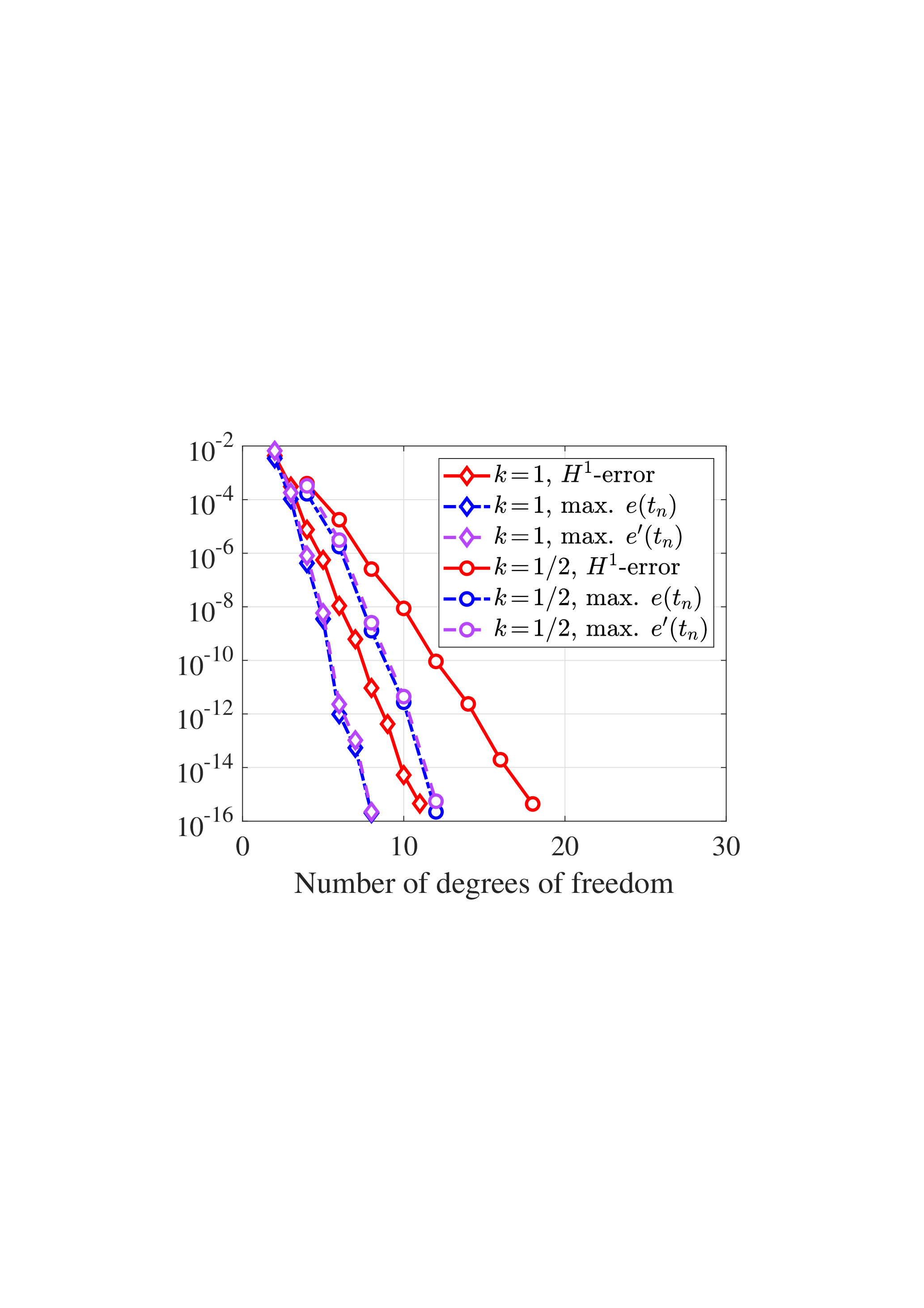}
   \caption{Example 1:  maximum function and derivative approximation errors at nodes versus $H^1$-errors of the $p$-version.}
   \label{ex1-super-error-p-version}
\end{minipage}
\end{figure}

We next consider the performance of the $h$- and $p$-versions of the
$C^1$-CPG method at the nodal points, respectively. We denote by
 $\mbox{max.}  ~e(t_n)$ and
 $\mbox{max.}  ~e'(t_n)$ the maximum function value and derivative approximation
absolute errors at nodal points $\{t_n\}_{n=1}^N$ of the time
partition.  Figures \ref{ex1-super-error-h-version} and
\ref{ex1-super-error-p-version} show that  both the $h$-version and
$p$-version exhibit superconvergence at the nodal points, where the
slopes of the curves of  nodal errors $\mbox{max.} e(t_n)$ and
$\mbox{max.} e'(t_n)$ are approximately twice as steep as those of
the $H^1$-errors.

Finally, we make a simple comparison between  the $hp$-version
$C^1$-CPG method   and the  $hp$-version $C^0$-CPG method developed
in \cite{WY} for second-order   IVPs. In \cite{WY},  the trial
spaces of the $C^0$-CPG method   consist of globally
$C^0$-continuous and piecewise polynomials while the test spaces
consist of discontinuous and piecewise polynomials. In Figure
\ref{ex1-compare-h-version}  we plot the $H^1$-errors of the
$h$-version $C^1$-CPG and   $C^0$-CPG methods, while in  Figure
\ref{ex1-compare-p-version}  we plot the $H^1$-errors for the
$p$-version $C^1$-CPG and $C^0$-CPG methods. Clearly, both the
$C^1$-CPG and the $C^0$-CPG methods exhibit the same convergence
rates, i.e., their error curves have almost the same slopes.
However, it can be seen that  the $C^1$-CPG method is more accurate
than the $C^0$-CPG method if the same number of DOF was used.

\begin{figure}[h!]
\begin{minipage}[h]{0.48\linewidth}
\centering
  \includegraphics [height=2.2in]{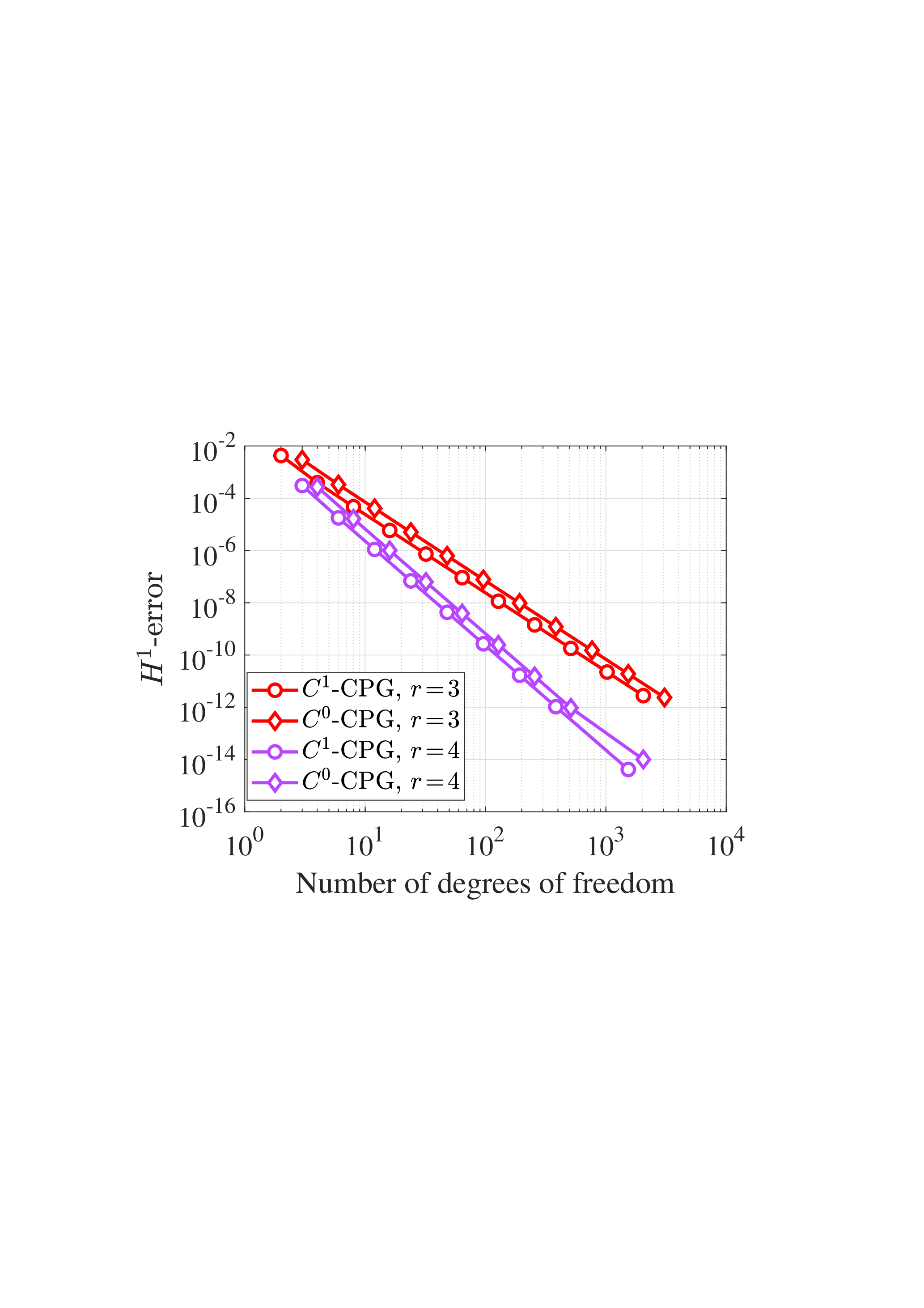}
    \caption{Example 1:   $C^1$-CPG method versus   $C^0$-CPG method, $H^1$-errors of the $h$-version.}
    \label{ex1-compare-h-version}
\end{minipage}
  \hfill\quad
\begin{minipage}[h]{0.48\linewidth}
\centering
   \includegraphics [height=2.2in]{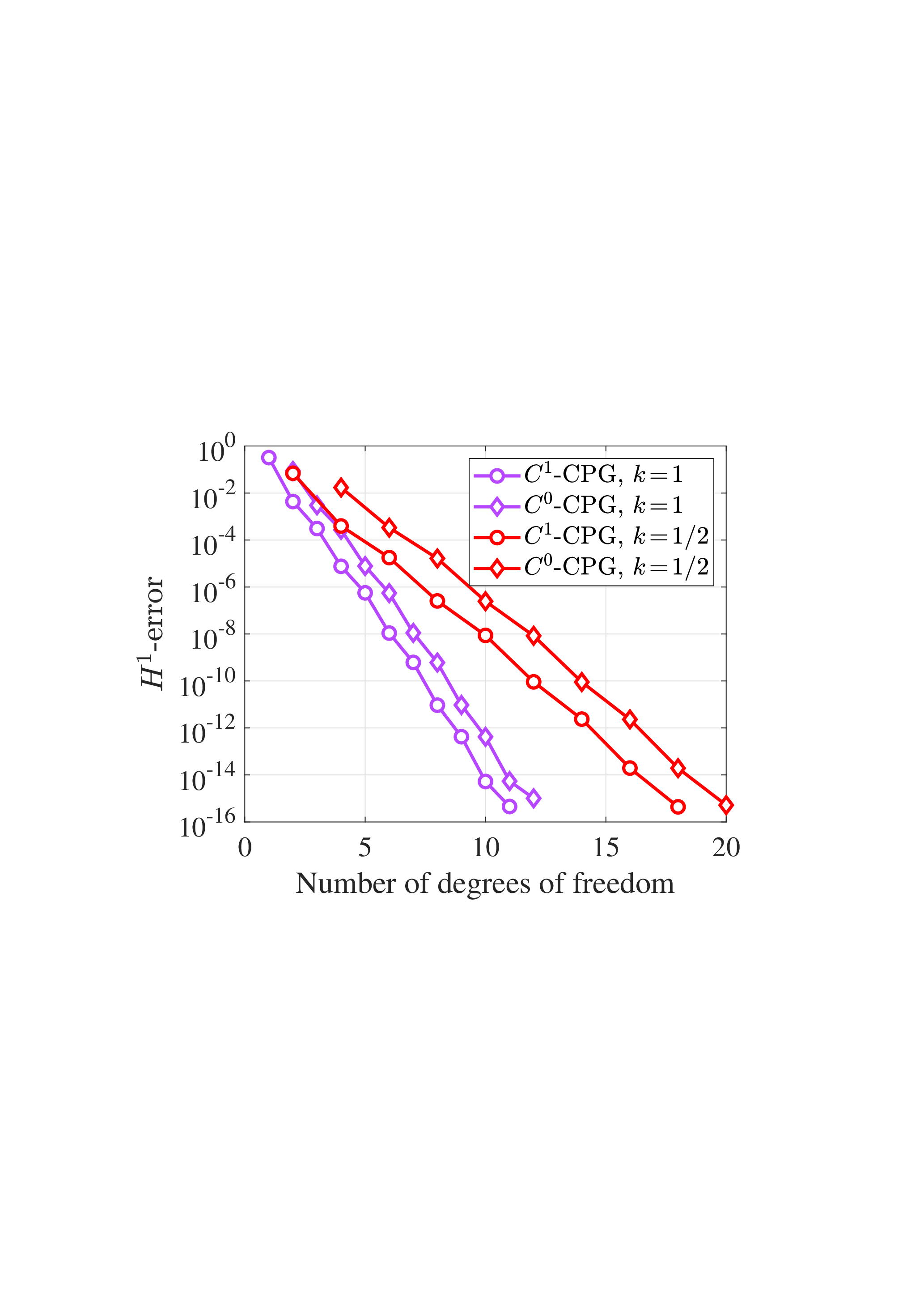}
   \caption{Example 1:  $C^1$-CPG method versus  $C^0$-CPG method, $H^1$-errors of the $p$-version.}
   \label{ex1-compare-p-version}
\end{minipage}
\end{figure}

\begin{table}[htp]
\tabcolsep 2pt \caption{Example 1: $C^1$-CPG method versus $C^0$-CPG
method: superconvergence of the $h$-version at nodal
points.}\label{table2}
\begin{center}
\begin{tabular}{|c|c||cc|cc||cc|cc|}\hline
\multicolumn{2}{|c||}{}  &\multicolumn{4}{|c||}{$C^1$-CPG method} &
\multicolumn{4}{|c|}{$C^0$-CPG method (cf. \cite{WY} )}         \\
\hline
   $~~r~~$ & $~~k~~$&$\mbox{max.}  ~e(t_n)$ & order &$\mbox{max.}  ~e'(t_n)$& order &$\mbox{max.}  ~e(t_n)$&order &$\mbox{max.}  ~e'(t_n)$& order\\ \hline
\multirow{3}{*}{2}&1/16&8.20e-04&2.01 &1.38e-03&2.00   &1.77e-05&3.03&3.38e-05&3.03\\
                  &1/32&2.05e-04&2.00 &3.46e-04&2.00   &2.19e-06&3.01&4.18e-06 &3.01\\
                  &1/64&5.12e-05&2.00 &8.64e-05&2.00   &2.73e-07&3.01&5.20e-07&3.00\\ \hline
\multirow{3}{*}{3}&1/8&5.72e-07&4.04&1.15e-06&4.03     &1.90e-08&5.09 &3.19e-08& 5.10\\
                 &1/16&3.55e-08&4.01&7.18e-08&4.01     &5.77e-10&5.04&9.66e-10& 5.04\\
                 &1/32&2.22e-09&4.00&4.48e-09&4.00     &1.78e-11&5.02&2.98e-11&5.02\\ \hline
\multirow{3}{*}{4}&1/4&2.79e-08 &5.98&4.89e-08&5.98    &8.85e-10&7.04&1.68e-09&7.04 \\
                  &1/8&4.37e-10 &6.00&7.65e-10&6.00    &6.84e-12&7.02&1.30e-11&7.01  \\
                  &1/16&6.84e-12&6.00&1.20e-11&6.00      &5.24e-14&7.03&1.00e-13&7.02 \\ \hline
\multirow{3}{*}{5}&1/2&1.28e-09&8.36&2.54e-09&8.31     &6.80e-11&9.41&1.09e-10&9.47\\
                  &1/4&4.71e-12&8.09&9.44e-12&8.07     &1.29e-13&9.04&2.05e-13&9.05\\
                  &1/8&1.79e-14&8.04&3.67e-14&8.00     &2.22e-16&9.18&3.33e-16&9.27\\ \hline

\end{tabular}
\end{center}
\end{table}

In Table \ref{table2},  we list  the maximum nodal errors
$\mbox{max.} e(t_n)$ and  $\mbox{max.} e'(t_n)$ of   the $h$-version
$C^1$-CPG and $C^0$-CPG methods. Clearly,  the results show that the
$C^1$-CPG method exhibits the superconvergence of the order
$O(k^{2r-2})$   while the  $C^0$-CPG method exhibits the
superconvergence of the order $O(k^{2r-1})$ at the nodes. It seems
that  the  $C^0$-CPG method  can achieve one order higher
superconvergence rate than the $C^1$-CPG method if  we use the same
approximation degree $r$.  However, it is worth  noting that   the
$C^1$-CPG method has $r-1$ DOF  while the $C^0$-CPG method has $r$
DOF (at each subinterval) if  the same approximation degree $r$ was
used. Hence, from another point of view,   if we use $r+1$th degree
$C^1$-CPG method (with the same number of DOF as the $r$th degree
$C^0$-CPG method), we can obtain $O(k^{2r})$ nodal superconvergence
rate (suppose  that $u$ is smooth enough), which is one order higher
than the $r$th degree $C^0$-CPG method.

We point out that for both the  $C^1$-CPG and the
$C^0$-CPG methods, the test spaces are based on piecewise
polynomials that are discontinuous at the time nodes,  and thus the
discrete Galerkin formulations can be decoupled into local problems
on each time step. In practice, both schemes  are transformed into
local algebraic system of the form (\ref{nonlinear-sys}) on each time
step. In this way, the computational difficulty in implementing
these schemes and the computational complexity/cost are basically at
the same level.  However,  if we employ the $C^1$-CPG and  $C^0$-CPG
methods for  time discretization of second-order evolutionary
equations such as wave equations,  a more thorough comparison is
needed, which includes possible preconditioned iterative methods and
  parallelization techniques.

\subsection{Example 2: a nonlinear Hamiltonian system}

We consider the   two-body problem \cite{WWX}:
\begin{equation}\label{ex3}
 \left\{\begin{array}{ll} \partial^2_tq_1(t)=-\ds\frac{q_1(t)}{(q^2_1(t)+q^2_2(t))^{3/2}},\quad & t\in[0, T],
\\[4mm]
\partial^2_tq_2(t)=-\ds\frac{q_2(t)}{(q^2_1(t)+q^2_2(t))^{3/2}},\quad & t\in[0, T],\\[5mm]
q_1(0)=1-\varepsilon,\quad  \partial_t q_1= 0,\\[2mm]
q_2(0)=0,\quad  \partial_t q_2(0)=
\sqrt{\ds\frac{1+\varepsilon}{1-\varepsilon}},
\end{array}\right.
\end{equation}
where $\varepsilon\in [0,1)$ is   the   eccentricity of elliptical
orbit. It is well-known that the Hamiltonian function of the system
is defined as
$$ H(t):= \frac 12 \left(p_1^2(t)+p_2^2(t)\right)-\frac{1}{\left(q_1^2(t)+q_2^2(t)\right)^{1/2}}, $$
where $p_1(t)=  q_1'(t)$ and $p_2(t)=  q_2'(t)$.

\begin{figure}[h!]
\begin{minipage}[h]{0.48\linewidth}
\centering
  \includegraphics [height=2.2in]{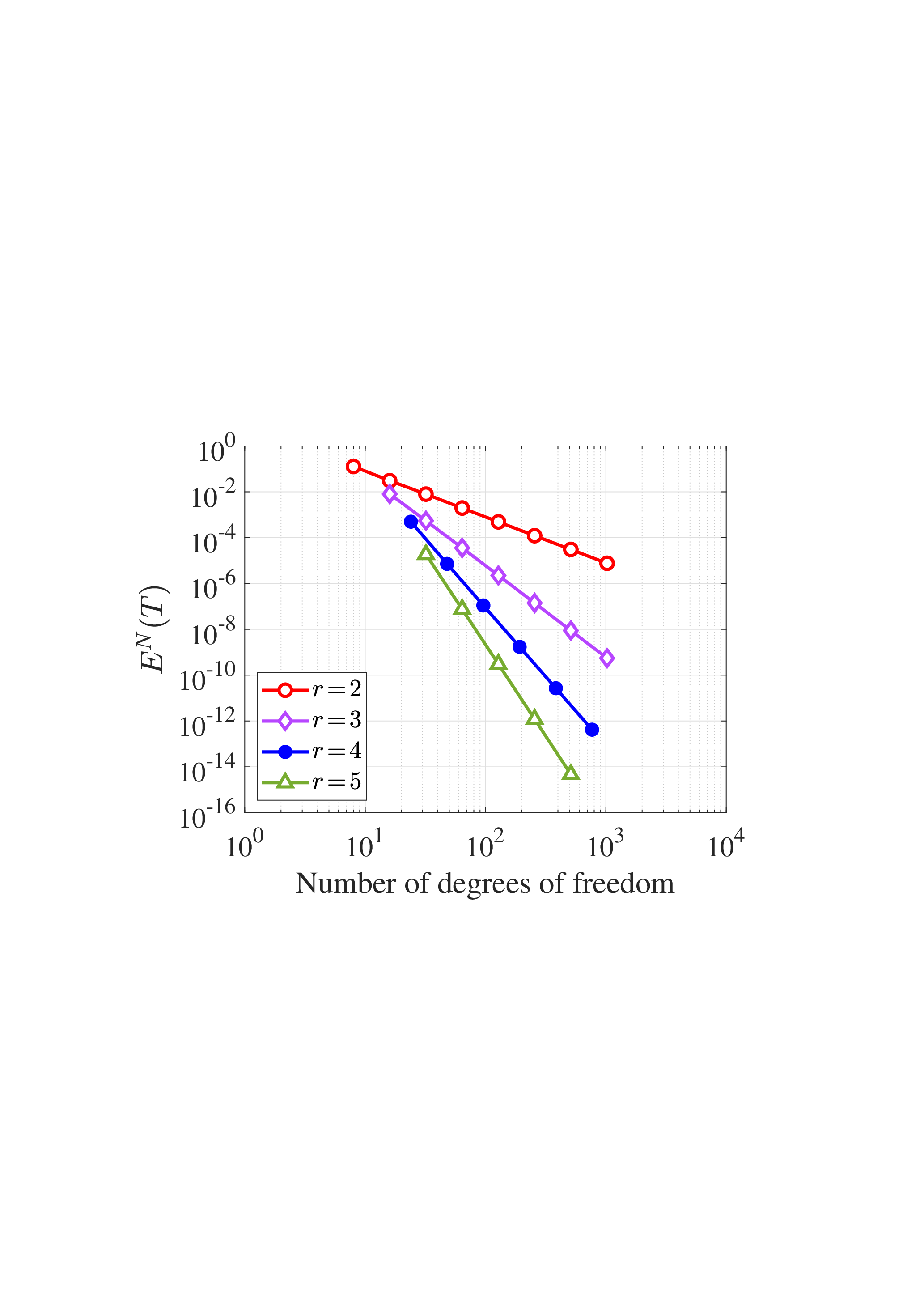}
    \caption{Example 2: energy errors of the $h$-version at $T=10$.}
    \label{Ex3-h-version}
\end{minipage}
  \hfill\quad
\begin{minipage}[h]{0.48\linewidth}
\centering
   \includegraphics [height=2.2in]{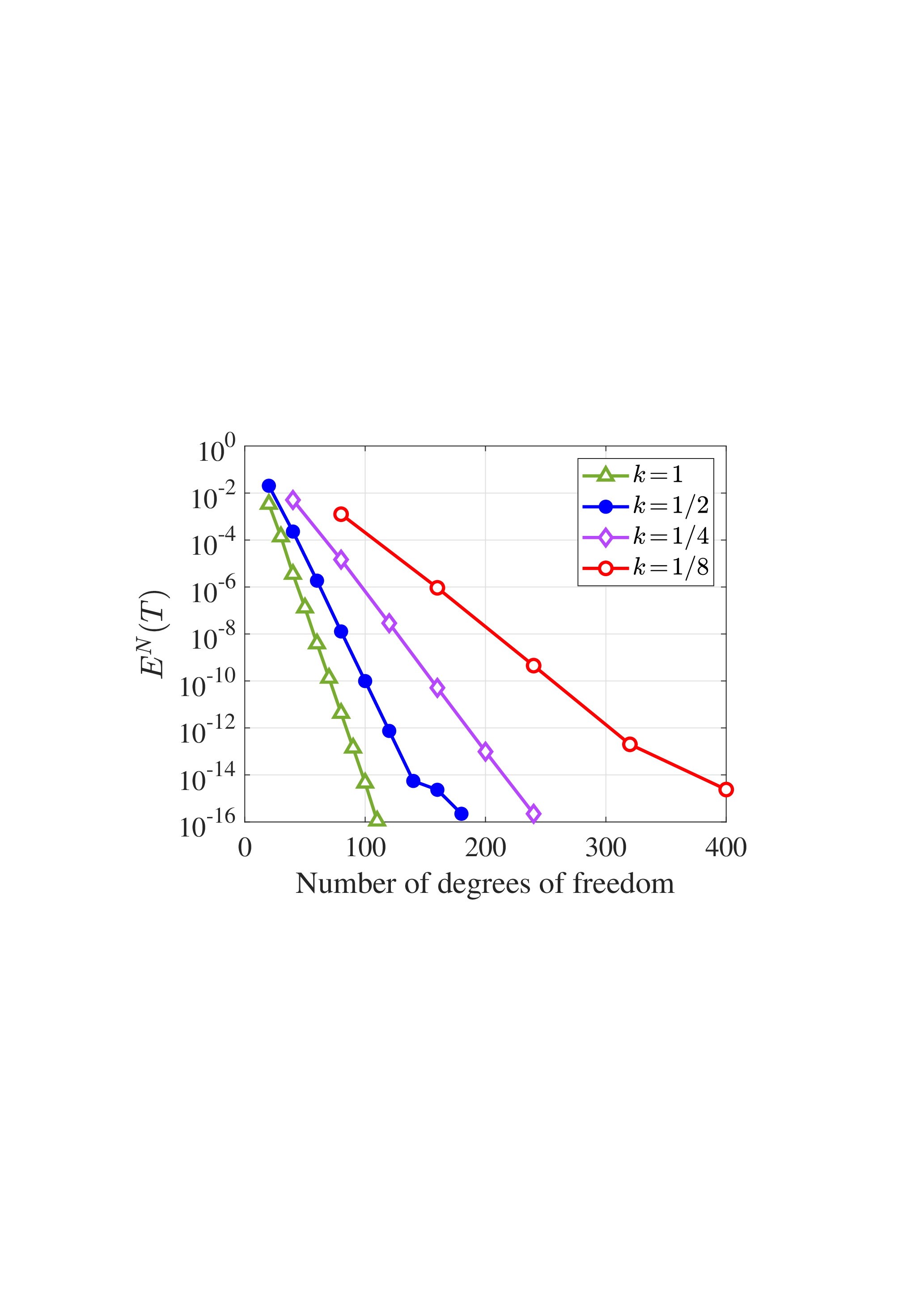}
   \caption{Example 2: energy errors of the $p$-version at $T=10$.}
   \label{Ex3-p-version}
\end{minipage}
\end{figure}

To describe   the numerical errors, we denote by  $Q_1(t)$ and
$Q_2(t)$ the $C^1$-CPG approximations  to $q_1(t)$ and $q_2(t)$,
respectively. We further denote by $H^N(t)$    the numerical energy
of the Hamiltonian, and by   $E^N(t)$ the energy error at  $t$,
i.e.,
\begin{equation*}\label{hami-err}
  E^N(t)=|H^N(t)-H(0)|.
\end{equation*}
Here,  $H(0)$ is the initial energy of the Hamiltonian.

We now consider the performance of the $h$- and $p$-versions of the
$C^1$-CPG method for  problem (\ref{ex3}) with  $\varepsilon=0.2$ and
$T=10$. Figure \ref{Ex3-h-version} shows that  the $h$-version
   exhibits  algebraic convergence rates while Figure \ref{Ex3-p-version} shows that  the $p$-version
   exhibits   exponential convergence rates.

\begin{figure}[h!]
\begin{minipage}[h]{0.48\linewidth}
\centering
  \includegraphics [height=2.2in]{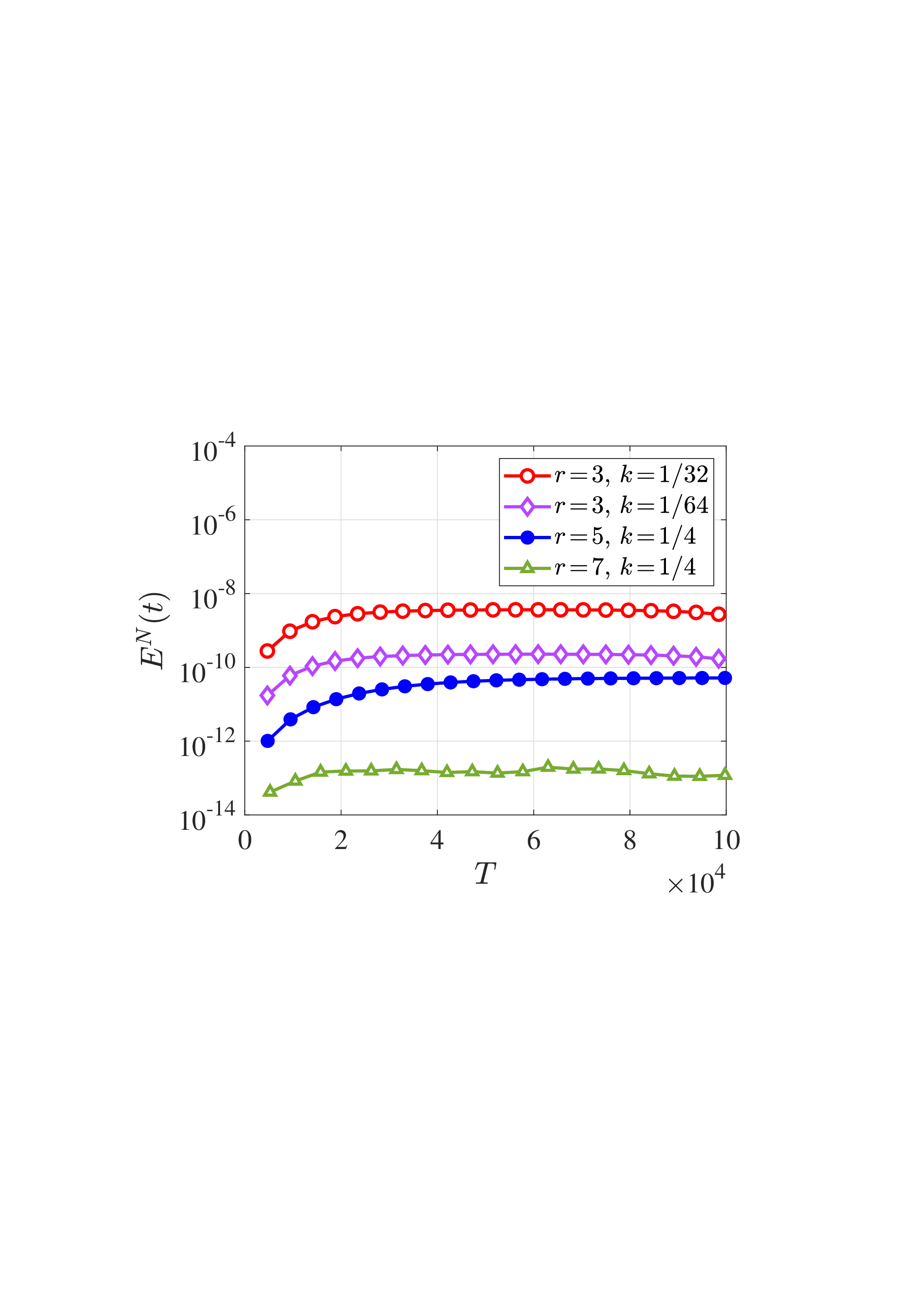}
    \caption{Example 2: energy errors of the $C^1$-CPG method for $t\in [0, 10^5]$.}
    \label{Ex3-long}
\end{minipage}
  \hfill\quad
\begin{minipage}[h]{0.48\linewidth}
\centering
   \includegraphics [height=2.2in]{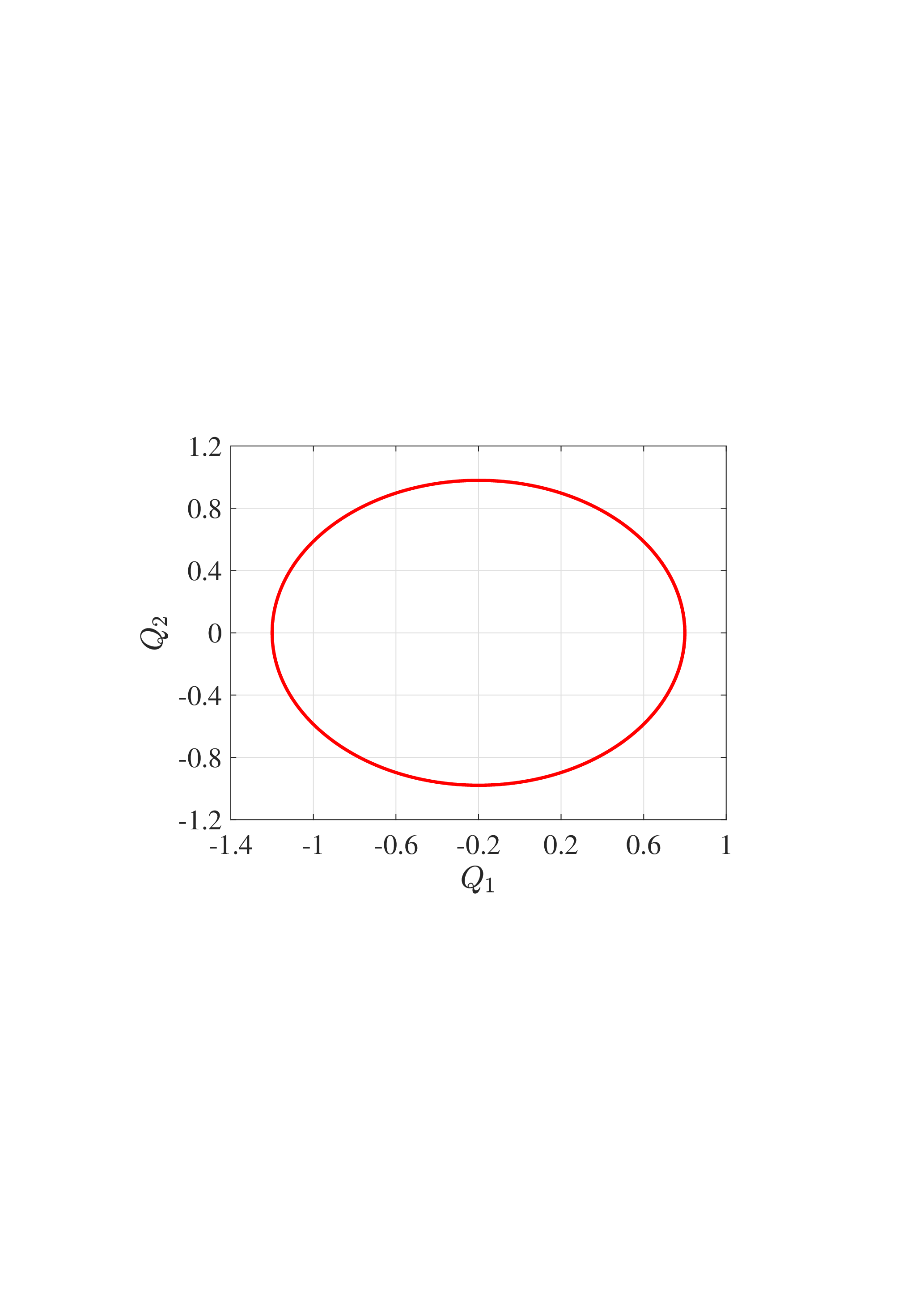}
   \caption{Example 2: numerical orbit $\left(Q_1(t), Q_2(t)\right)$ for $t\in [0, 10^5]$.}
   \label{Ex3-orbit}
\end{minipage}
\end{figure}

In Figure \ref{Ex3-long}, we plot the point-wise  energy errors
$E^N(t)$  of the $C^1$-CPG method (with different step-sizes $k$ and
approximation degrees $r$) for problem (\ref{ex3}) with $t\in [0,
10^5]$. Clearly, the $C^1$-CPG method  is stable and accurate for
long-time computation. In Figure \ref{Ex3-orbit}, we  plot the
numerical orbit $\left(Q_1(t), Q_2(t)\right)$ for  $t\in [0, 10^5]$.
Clearly,

\subsection{Example 3: a   linear wave equation}

We consider the two-dimensional linear wave problem:
\begin{equation}\label{ex-wave}
 \left\{\begin{aligned}
 \partial_t^2u-\Delta u &=f \quad  &&\text{in}~ \Omega \times  I,
 \\  u&=0 \quad  &&\text{on}~ \partial\Omega \times  I,
\\ u(\cdot,0)&=u_0 \quad && \text{in}~ \Omega,
 \\ \partial_t u(\cdot,0)&=u_1  \quad &&\text{in}~ \Omega,
 \end{aligned}\right.
\end{equation}
where $\Omega=[0,1]\times[0,1]$ and $I=(0,T)$ with $T=1$. Let $u_0,
u_1$ and $f=f(x,y,t)$ be chosen such that the exact solution is
given by $u=x(1-x)y(1-y)\cos(t)$.

In this example, we   use the quadratic finite element method (with
uniform partition of $\Omega$ which consists of $100$ square elements) for
spatial discretization. For each fixed time $t$, it can be seen that
the exact solution $u$ is an element of the finite element space
$V_h$, which implies that there is no spatial error. Therefore, we
can concentrate only on the  time discretization error and exclude
interactions with the spatial error in this example.

After spatial discretization, we further use the $C^1$-CPG  time
stepping method for time discretization of the problem
(\ref{ex-wave}).   We employ uniform time partitions with step-size
$k$ and uniform approximation degrees $r$. Let $u_{h\tau}$  be the
fully discrete  Galerkin
 approximations as defined in Section
\ref{sec4-3}. For simplicity, we  denote the error function  by
$e=u-u_{h\tau}$. The   errors  in
 different norms are defined as follows:
$$\|e\|_{L^2(L^2)}:=\left(\int_{I} \|e\|^2_{L^2(\Omega)} dt \right)^{\frac
12},\quad \|e\|_{H^1(L^2)}:=\left(\int_{I}\left(
\|e\|^2_{L^2(\Omega)} + \|\partial_t e\|^2_{L^2(\Omega)}\right)
dt\right)^{\frac 12},$$\vspace{-2mm}
$$\|e\|_{H^2(L^2)}:=\left(\int_{I}\left( \|e\|^2_{L^2(\Omega)} +
\|\partial_t e\|^2_{L^2(\Omega)}+ \|\partial_{tt}
e\|^2_{L^2(\Omega)}\right) dt\right)^{\frac 12}, \quad
\|e\|_{L^\infty(L^2)}:= \max_{t\in \bar I}\|e\|_{L^2(\Omega)}.$$

\begin{figure}[h!]
\begin{minipage}[h]{0.48\linewidth}
\centering
  \includegraphics [height=2.2in]{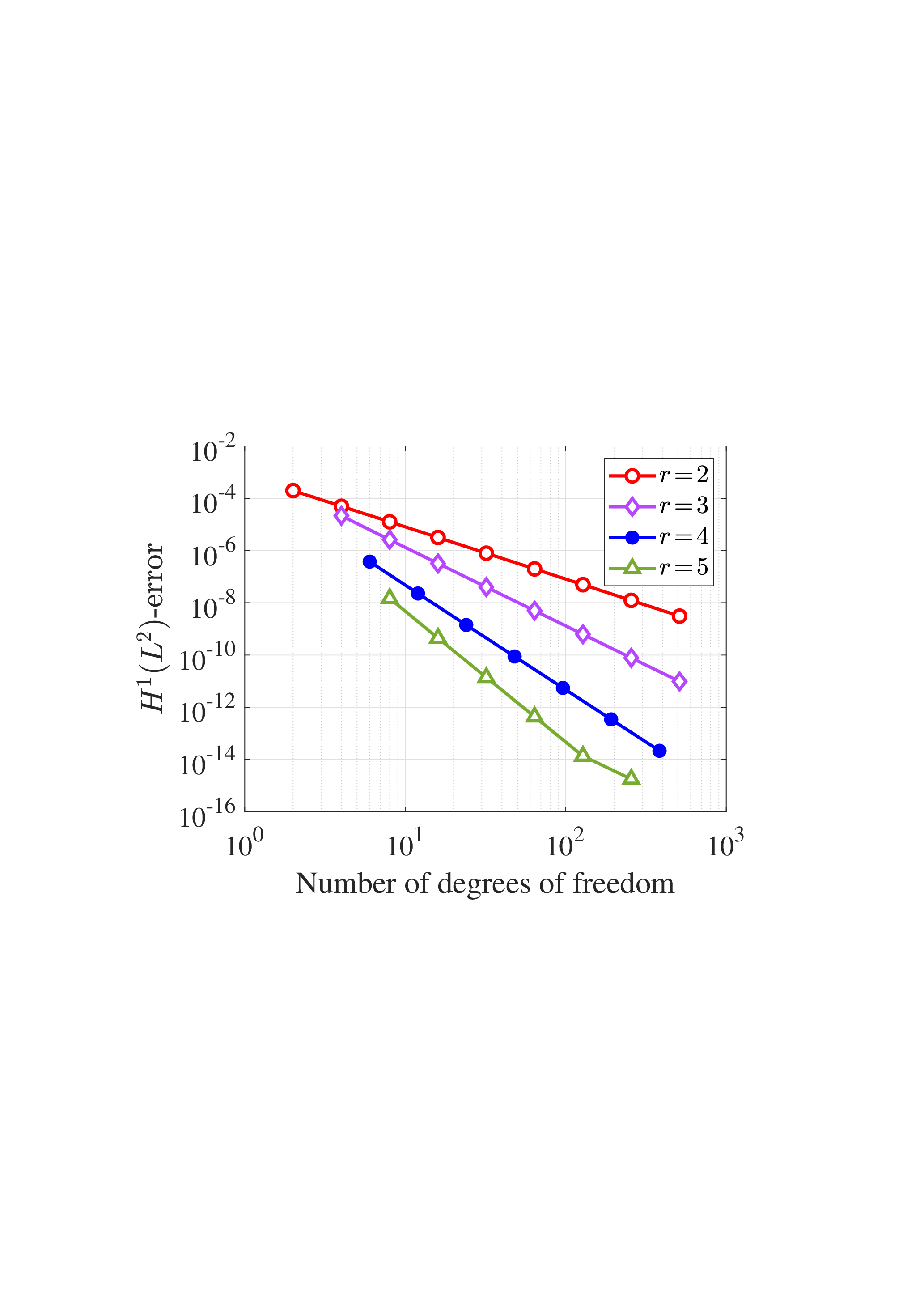}
    \caption{Example 3: $H^1(L^2)$-errors of the $h$-version.}
    \label{wave-H1-h-version}
\end{minipage}
  \hfill\quad
\begin{minipage}[h]{0.48\linewidth}
\centering
   \includegraphics [height=2.2in]{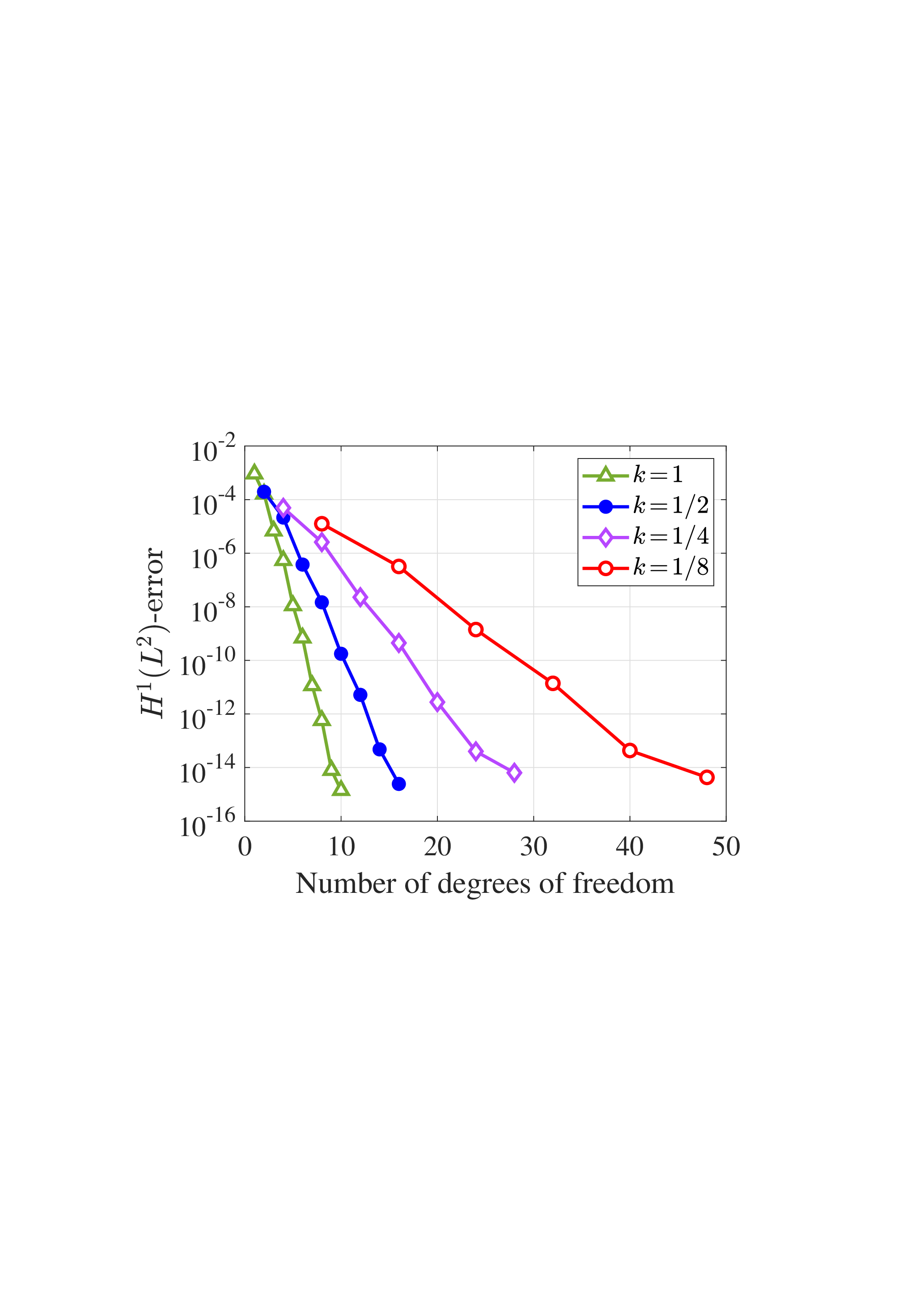}
   \caption{Example 3: $H^1(L^2)$-errors of the $p$-version.}
   \label{wave-H1-p-version}
\end{minipage}
\end{figure}


\begin{table}[htp]
\vspace{-3mm}
\tabcolsep 3pt \caption{Example 3: numerical errors and convergence
orders of the $h$-version.}\label{table3}
\begin{center}
\begin{tabular}{|c|c|cc|cc|cc|cc|}
\hline
  $~~r~~$ &   $~~k~~$&$\|e\|_{L^2(L^2)}$& order & $\|e\|_{H^1(L^2)}$& order &$\|e\|_{H^2(L^2)}$&order     & $\|e\|_{L^\infty(L^2)}$& order \\
  \hline
\multirow{3}{*}{2}&1/64   &4.39e-08 &2.00 &1.98e-07   &2.00
&7.85e-05&1.00   &6.46e-08 &2.01\\
                  &1/128  &1.10e-08 &2.00 &4.95e-08   &2.00   &3.93e-05&1.00   &1.61e-08
                  &2.01\\
                  &1/256  &2.75e-09 &2.00 &1.24e-08   &2.00   &1.96e-05&1.00   &4.01e-09
                  &2.00\\\hline
\multirow{3}{*}{3}&1/32   &4.22e-11 &4.00 &4.99e-09   &3.00
&1.03e-06&2.00   &8.27e-11 &3.99\\
                  &1/64   &2.64e-12 &4.00 &6.24e-10   &3.00   &2.59e-07&2.00   &5.17e-12
                  &4.00\\
                  &1/128  &1.64e-13 &4.00 &7.80e-11   &3.00   &6.47e-08&2.00   &3.23e-13
                  &4.00\\\hline
\multirow{3}{*}{4}&1/8    &2.67e-11 &5.02 &1.41e-09   &4.01
&1.07e-07&3.00   &6.18e-11 &4.99 \\
                  &1/16   &8.32e-13 &5.00 &8.82e-11   &4.00   &1.34e-08&3.00   &1.95e-12 &4.99
                  \\
                  &1/32   &2.60e-14 &5.00 &5.51e-12   &4.00   &1.67e-09&3.00   &6.09e-14 &5.00
                  \\\hline
\multirow{3}{*}{5}&1/4    &1.21e-11 &6.08 &4.47e-10   &5.04   &2.21e-08&4.01   &2.57e-11 &6.05    \\
                  &1/8    &1.87e-13 &6.02 &1.39e-11   &5.01   &1.38e-09&4.00   &3.96e-13 &6.02    \\
                  &1/16   &2.93e-15 &6.00 &4.34e-13   &5.00   &8.61e-11&4.00   &6.17e-15 &6.01    \\ \hline
\end{tabular}
\end{center}
\end{table}

 In Figure \ref{wave-H1-h-version}, we plot the
$H^1(L^2)$-errors of the $h$-version $C^1$-CPG method   against the
total number of DOF in time direction for different $r$. The error
curves in log-log scale imply that the convergence order in time is
algebraic for the $h$-version.  Moreover, we   list the numerical
errors (in different norms) and convergence orders of the
$h$-version method in Table \ref{table3}. It can be seen that the
convergence orders (in time) are similar as those reported in
  Example 1 for  the scalar ODE.
In  Figure \ref{wave-H1-p-version}, we plot the $H^1(L^2)$-errors of
the $p$-version $C^1$-CPG method  (on fixed  time partitions with
$1, 2, 4, 8$ uniform time steps) in a semi-log scale. Clearly, the
exponential convergence is achieved for each time partition as $r$
increases.

\subsection{Example 4: a nonlinear wave equation}

We consider the two-dimensional sine-Gordon equation:
\begin{equation}\label{ex-sine}
 \left\{\begin{aligned}
 \partial_t^2u-\Delta u+\sin u &=f  &&\quad  \text{in}~ \Omega \times  I,\\
 u  &=0 &&\quad  \text{on}~ \partial\Omega \times  I,\\
 u(\cdot,0) &=u_0 &&\quad  \text{in}~ \Omega,\\
 \partial_t u(\cdot,0) &=u_1  &&\quad \text{in}~ \Omega,
 \end{aligned}\right.
\end{equation}
where $\Omega=[-1,1]\times[-1,1]$ and $I=(0,T)$ with $T=2$. Let
$u_0, u_1$ and $f=f(x,y,t)$ be chosen such that the exact solution
is given by $u=\sin(\pi x)\sin(\pi y)\cos(2\pi t)$.

In this example, we  first use the Spectral-Galerkin method  for
spatial discretization.  We choose the Lobatto polynomials of degree
$20$ in both $x$ and $y$ directions  as   basis function of  the
spectral Galerkin approximation  such that the fully discrete error
is dominated by the time discretization error. We further use the
$C^1$-CPG  time stepping method (with uniform step-size $k$ and
approximation degrees $r$) for time discretization of the problem
(\ref{ex-sine}).

\begin{figure}[h!]
\begin{minipage}[h]{0.48\linewidth}
\centering
  \includegraphics [height=2.2in]{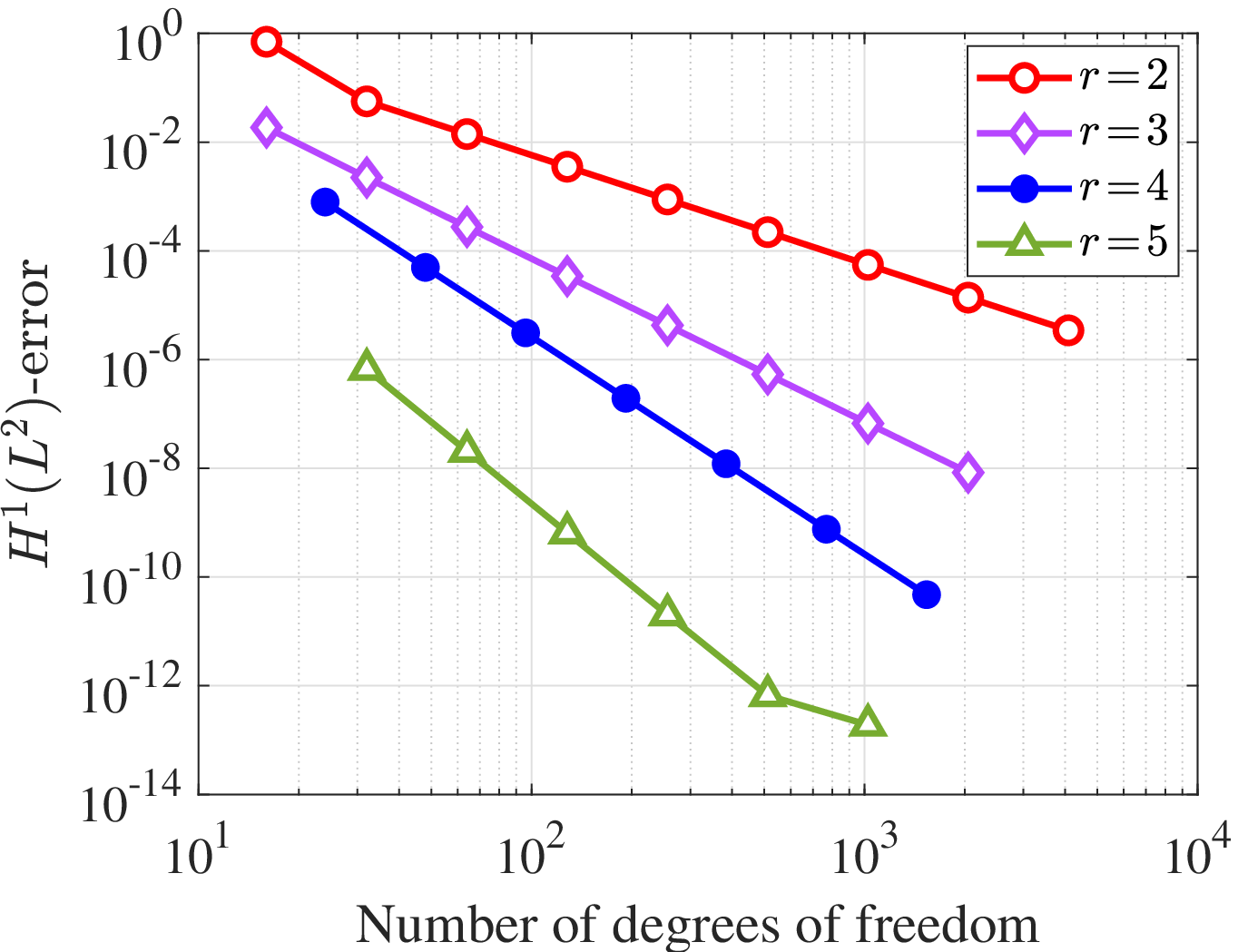}
    \caption{Example 4: $H^1(L^2)$-errors of the $h$-version.}
    \label{nonlinear-wave-H1-h-version}
\end{minipage}
  \hfill\quad
\begin{minipage}[h]{0.48\linewidth}
\centering
  \includegraphics [height=2.2in]{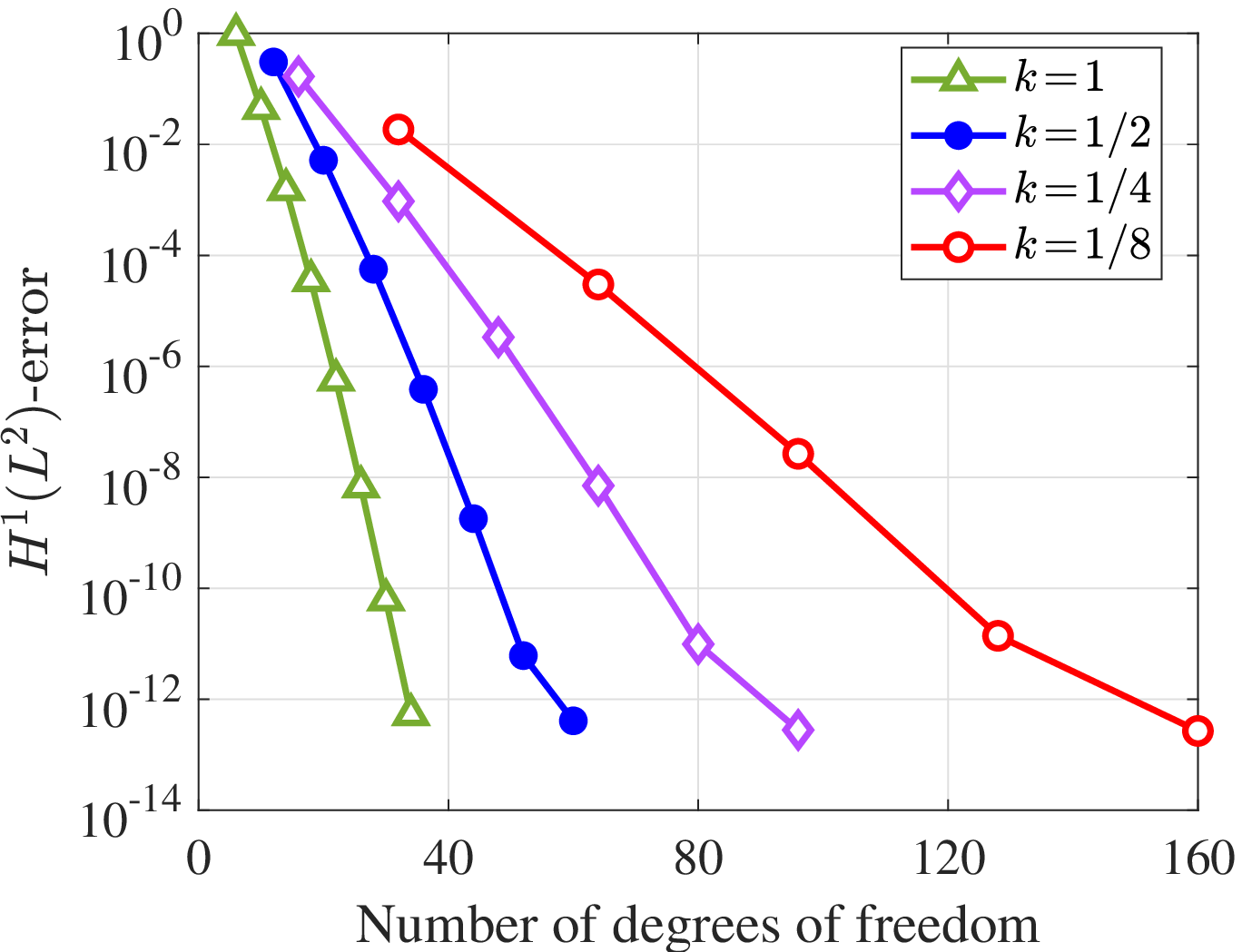}
   \caption{Example 4: $H^1(L^2)$-errors of the $p$-version.}
   \label{nonlinear-wave-H1-p-version}
\end{minipage}
\end{figure}

\begin{table}[htp]
\vspace{-3mm} \tabcolsep 3pt \caption{Example 4: numerical errors
and convergence orders of the $h$-version.}\label{table4-nonlinear}
\begin{center}
\begin{tabular}{|c|c|cc|cc|cc|cc|}
\hline
  $~~r~~$ &   $~~k~~$&$\|e\|_{L^2(L^2)}$& order & $\|e\|_{H^1(L^2)}$& order &$\|e\|_{H^2(L^2)}$&order     & $\|e\|_{L^\infty(L^2)}$& order \\
  \hline
\multirow{3}{*}{2}
        &1/128 & 6.11e-04 & 2.00 & 3.53e-03 & 2.00  & 5.60e-01 & 1.00  & 8.12e-04 & 2.00  \\
        &1/256 & 1.53e-04 & 2.00  & 8.83e-04 & 2.00  & 2.80e-01 & 1.00  & 2.03e-04 & 2.00  \\
        &1/512 & 3.82e-05 & 2.00  & 2.21e-04 & 2.00  & 1.40e-01 & 1.00  & 5.08e-05 & 2.00  \\ \hline
\multirow{3}{*}{3}
        &1/64 & 3.23e-07 & 4.00  & 3.42e-05 & 3.00  & 1.42e-02 & 2.00  & 5.11e-07 & 3.99  \\
        &1/128 & 2.02e-08 & 4.00  & 4.27e-06 & 3.00  & 3.55e-03 & 2.00  & 3.20e-08 & 4.00  \\
        &1/256 & 1.26e-09 & 4.00  & 5.34e-07 & 3.00  & 8.86e-04 & 2.00 & 2.00e-09 & 4.00 \\ \hline
\multirow{3}{*}{4}
        &1/32 & 1.44e-08 & 5.02  & 3.10e-06 & 4.00  & 9.41e-04 & 3.00  & 2.20e-08 & 5.03  \\
        &1/64 & 4.48e-10 & 5.00  & 1.94e-07 & 4.00  & 1.18e-04 & 3.00  & 6.79e-10 & 5.02  \\
        &1/128 & 1.40e-11 & 5.00  & 1.21e-08 & 4.00  & 1.47e-05 & 3.00  & 2.11e-11 & 5.01  \\ \hline
\multirow{3}{*}{5}
        &1/16 & 6.75e-11 & 6.00  & 2.07e-08 & 5.00  & 8.23e-06 & 4.00  & 1.77e-10 & 5.98  \\
        &1/32 & 1.05e-12 & 6.00  & 6.48e-10 & 5.00  & 5.14e-07 & 4.00  & 2.77e-12 & 6.00  \\
        &1/64 & 1.79e-14 & 5.88  & 2.02e-11 & 5.00  & 3.22e-08 & 4.00  & 4.34e-14 & 6.00  \\ \hline
\end{tabular}
\end{center}
\end{table}

 In Figures \ref{nonlinear-wave-H1-h-version} and
 \ref{nonlinear-wave-H1-p-version},  we plot the
$H^1(L^2)$-errors of the $h$- and $p$-versions of the $C^1$-CPG
method, respectively. It can be seen that the convergence (with
respect to the total number of DOF in time direction)  of the
$h$-version is  algebraic and the convergence of the  $p$-version is
exponential, which are   similar as those reported in
  Example $3$ for  the linear wave equation (\ref{ex-wave}).

\begin{table}[htp]
\tabcolsep 8pt \caption{Example 4: superconvergence of the
$h$-version at nodal points.}\label{table4-node}
\begin{center}
\begin{tabular}{|c|c|cc|cc|}\hline
   $~~r~~$ & $~~k~~$&$\mbox{max.}  ~e(t_n)$ & order &$\mbox{max.}  ~e'(t_n)$& order \\ \hline
\multirow{3}{*}{2}&1/32 & 2.34e-03 & 2.00 & 7.50e-03 & 2.00  \\
        &1/64 & 5.85e-04 & 2.00  & 1.88e-03 & 2.00   \\
        &1/128 & 1.46e-04 & 2.00  & 4.69e-04 & 2.00  \\ \hline
\multirow{3}{*}{3}  &1/16 & 3.12e-06 & 3.98  & 1.34e-05 & 3.98  \\
        &1/32 & 1.95e-07 & 4.00  & 8.36e-07 & 4.00 \\
        &1/64 & 1.22e-08 & 4.00 & 5.23e-08 & 4.00  \\ \hline
\multirow{3}{*}{4} &1/8 & 5.46e-08 & 6.00  & 1.77e-07 & 6.00  \\
        &1/16 & 8.61e-10 & 5.99  & 2.76e-09 & 6.00  \\
        &1/32 & 1.34e-11 & 6.00  & 4.32e-11 & 6.00 \\ \hline
\multirow{3}{*}{5} &1/4 & 4.34e-09 & 7.70  & 1.84e-08 & 7.91  \\
        &1/8 & 1.73e-11 & 7.97  & 7.35e-11 & 7.97  \\
        &1/16 & 6.56e-14 & 8.04  & 3.41e-13 & 7.75 \\ \hline
\end{tabular}
\end{center}
\end{table}

In Table \ref{table4-nonlinear},  we  list the numerical errors (in
different norms) and convergence orders of the $h$-version $C^1$-CPG
method. It can be seen that the convergence orders (in time) are
similar as those reported in   Example $1$ for  the scalar ODE. In
Table \ref{table4-node}, we also present  the maximum nodal errors
$\mbox{max.}  ~e(t_n):= \max\limits_{1\le n \le N}
\|e(t_n)\|_{L^2(\Omega)}$ and $\mbox{max.} ~e'(t_n):=
\max\limits_{1\le n \le N} \|e'(t_n)\|_{L^2(\Omega)}$ of the
$h$-version $C^1$-CPG method. Clearly,  the results show that the
$C^1$-CPG method exhibits the superconvergence  order $O(k^{2r-2})$
(with respect to the time step-size $k$) at the nodal points.

\section{ Concluding Remarks}\label{sec6}

In this paper, we have introduced and analyzed an $hp$-version
$C^1$-CPG method for a general nonlinear second-order IVP of ODE. We
have derived several a-priori error estimates that are fully
explicit with regard to the local discretization and local
regularity parameters.   Moreover, we have shown that  the
$hp$-version  $C^1$-CPG method superconverges at the nodal points of
the time partition with respect to the time steps and approximation
degrees. As an application, we have also applied the $hp$-version
$C^1$-CPG method to the time discretization of nonlinear
second-order wave equations. Error analysis of the space-time fully
discrete Galerkin method (based on the $hp$-version  CG  method for
spatial discretization  and the $hp$-version  $C^1$-CPG method for
time   discretization) for wave equations will make the subject of
our future research.

\appendix

\setcounter{equation}{0}

\renewcommand\theequation{A.\arabic{equation}} 

\section{Proofs of some lemmas}

\subsection{ Proof of Lemma \ref{exi-uni}}\label{app-A-1}
\begin{proof}
In view of Remark \ref{rem1}, it suffices to prove that, for given
initial values  $U|_{I_{n-1}} (t_{n-1})$ and $U'|_{I_{n-1}}
(t_{n-1})$,  the problem (\ref{C1CPG-FEM-1}) has a unique solution
on the subinterval $I_n$.

To this end, we define that,  for any $\widetilde{U}\in
P_{r_n}(I_n)$, the polynomial $U:=G \widetilde{U}\in P_{r_n}(I_n)$
as the solution of   the following variational problem
\begin{eqnarray}\label{C1CPG-FEM-2}
\left\{\begin{array}{ll}
 \ds\int_{I_n} U''\varphi dt =
 \ds\int_{I_n} f(t,\widetilde{U},\widetilde{U}')\varphi(t)dt,\\[4mm]
U|_{I_n} (t_{n-1}) =U|_{I_{n-1}} (t_{n-1}),\quad U'|_{I_n} (t_{n-1})
=U'|_{I_{n-1}} (t_{n-1})
\end{array}\right.
\end{eqnarray}
for all $\varphi\in P_{r_n-2}(I_n)$. Clearly,  (\ref{C1CPG-FEM-2})
is a linear system of $r_n+1$ equations which is uniquely solvable.
Hence,  $G \widetilde{U}$ is well-defined.

We will prove that, the  operator $G$  is a contraction on
$P_{r_n}(I_n)$ provided that $k_n$ is sufficiently small. Then, by
Banach's fixed point theorem, there exists a unique fixed point
$U^*\in P_{r_n}(I_n)$ with $U^*=G U^*$, namely, $U^*$ is the unique
solution of (\ref{C1CPG-FEM-1}).

It remains to prove the contraction property of the operator $G$, we
let $\widetilde{V},~\widetilde{W}\in P_{r_n}(I_n)$ and set
$V=G\widetilde{V}$, $W=G\widetilde{W}$. By the definition of $G$ and
(\ref{Lip-con}), there holds
\begin{equation}\label{uniq-10}
\ds\int_{I_n} (V-W)''\varphi dt =
 \ds\int_{I_n}
 \left(f(t,\widetilde{V},\widetilde{V}')-f(t,\widetilde{W},\widetilde{W}')\right)\varphi(t)dt
 \le   L \ds\int_{I_n} \Big(|\widetilde{V}-\widetilde{W}|+|\widetilde{V}'-\widetilde{W}'|\Big)\varphi dt
\end{equation}
for all $\varphi\in P_{r_n-2}(I_n)$. Selecting $\varphi=(V-W)''$ in
(\ref{uniq-10}) and using the Cauchy-Schwarz inequality,  gives
\begin{eqnarray*}
\|V''-W''\|^2_{L^2(I_n)} \leq \sqrt{2}L\left\{\ds\int_{I_n}
\left(|\widetilde{V}-\widetilde{W}|^2+|\widetilde{V}'-\widetilde{W}'|^2\right)dt\right\}^{\frac{1}{2}}
\|V''-W''\|_{L^2(I_n)},
\end{eqnarray*}
which implies
\begin{equation}\label{uniq-11}
\|V''-W''\|_{L^2(I_n)}\leq
\sqrt{2}L\|\widetilde{V}-\widetilde{W}\|_{H^1(I_n)}.
\end{equation}

Since $(V-W)(t_{n-1})=(V-W)'(t_{n-1})=0$, there holds
\begin{equation}\label{uniq-12}
(V-W)(t)=\ds\int_{t_{n-1}}^{t}\left(\ds\int_{t_{n-1}}^{\eta}(V-W)''(s)ds\right)d\eta.
\end{equation}
Using the  Cauchy-Schwarz inequality, we have
\begin{eqnarray*}
\begin{aligned}
\ds\int_{t_{n-1}}^{t} \left(\ds\int_{t_{n-1}}^{\eta}
(V-W)''(s)ds\right)d\eta &\leq
\ds\int_{t_{n-1}}^{t}\left(\ds\int_{t_{n-1}}^{\eta}ds\right)^{\frac{1}{2}}
\left(\ds\int_{t_{n-1}}^{\eta}|V''-W''|^2ds\right)^{\frac{1}{2}}d\eta\\
&=\ds\int_{t_{n-1}}^{t}
(\eta-t_{n-1})^{\frac{1}{2}}\left(\ds\int_{t_{n-1}}^{\eta}|V''-W''|^2ds\right)^{\frac{1}{2}}d\eta\\
&\leq \left\{\ds\int_{t_{n-1}}^{t}(\eta-t_{n-1})d\eta
\right\}^{\frac{1}{2}} \left\{\ds\int_{t_{n-1}}^{t}
\left(\ds\int_{t_{n-1}}^{\eta}|V''-W''|^2ds\right)d\eta\right\}^{\frac{1}{2}}\\
&\leq
\sqrt{\ds\frac{1}{2}}(t-t_{n-1})\left\{\ds\int_{t_{n-1}}^{t}\left(\ds\int_{t_{n-1}}^{\eta}|V''-W''|^2ds\right)d\eta\right\}^{\frac{1}{2}},
 \end{aligned}
\end{eqnarray*}
and hence, by (\ref{uniq-12}) we get
\begin{eqnarray}\label{uniq-15}
\begin{aligned}
\|V-W\|^2_{L^2(I_n)} &=  \ds\int_{I_n}\left\{\ds\int_{t_{n-1}}^{t}
\left(\ds\int_{t_{n-1}}^{\eta} (V-W)''(s)ds\right)d\eta\right\}^2dt
\\
&\leq
\ds\frac{1}{2}\ds\int_{I_n}(t-t_{n-1})^2\left\{\ds\int_{t_{n-1}}^{t}
\left(\ds\int_{t_{n-1}}^{\eta}|V''-W''|^2ds\right)d\eta\right\}dt\\
&\leq
\ds\frac{1}{2}\ds\int_{I_n}(t-t_{n-1})^2\left\{\ds\int_{t_{n-1}}^{t}d\eta
\ds\int_{I_n}|V''-W''|^2ds \right\}dt\\
&= \ds\frac{k_n^4}{8}\|V''-W''\|^2_{L^2(I_n)}.
 \end{aligned}
\end{eqnarray}

It is well known that,  for any function  $v\in H^1(a,b)$ and
satisfies $v(a)=0$, there holds the  Poincar\'{e}-Friedrichs
inequality \cite{bra}
\begin{equation}\label{PF-ine}
\|v\|_{L^2(a,b)} \le  (b-a) \|v'\|_{L^2(a,b)}.
\end{equation}
Noting the fact $(V-W)'(t_{n-1})=0$ and using (\ref{PF-ine}), we
obtain
\begin{equation}\label{uniq-16}
  \|V'-W'\|_{L^2(I_n)}\leq k_n\|V''-W''\|_{L^2(I_n)}.
\end{equation}
Inserting (\ref{uniq-11}) into (\ref{uniq-15}) and (\ref{uniq-16}),
we get
\begin{equation}\label{uniq-18}
\|V-W\|^2_{L^2(I_n)}\leq
\ds\frac{L^2k_n^4}{4}\|\widetilde{V}-\widetilde{W}\|^2_{H^1(I_n)}
\end{equation}
and
\begin{equation}\label{uniq-17}
\|V'-W'\|^2_{L^2(I_n)}\leq
2L^2k^2_n\|\widetilde{V}-\widetilde{W}\|^2_{H^1(I_n)}.
\end{equation}
Moreover, combining  (\ref{uniq-18}) and (\ref{uniq-17}),  yields
\begin{equation*}
\|V-W\|^2_{H^1(I_n)}\leq
\ds\frac{L^2k_n^2}{4}(8+k^2_n)\|\widetilde{V}-\widetilde{W}\|^2_{H^1(I_n)},
\end{equation*}
which implies that
\begin{equation*}
\|G \widetilde{V}-G\widetilde{W}\|_{H^1(I_n)}\leq
\ds\frac{Lk_n}{2}\sqrt{8+k^2_n}\|\widetilde{V}-\widetilde{W}\|_{H^1(I_n)}.
\end{equation*}
Therefore,  the operator $G$ is a contraction $P_{r_n}(I_n)$ if the
condition (\ref{mesh-cond}) is satisfied. This completes the proof.
\end{proof}

\subsection{Proof of Lemma  \ref{wellpose}}\label{app-A-2}
\begin{proof}
For $r=2$ and $3$, the existence and uniqueness of
$\Pi_{\Lambda}^{r}u$ can be easily verified by setting
$$\Pi_{\Lambda}^{r}u=u(-1)- \frac{(x+1)(x-3)}{4}
u'(-1)+\frac{(x+1)^2}{4}u'(1)$$ and $\Pi_{\Lambda}^{r}u=H_3u(x)$,
respectively. Here, $H_3u$ is the cubic Hermite interpolation of $u$
defined by (\ref{u-exp-1}).

Selecting  $\varphi=1$ in (\ref{def-proj}) and using the third
condition of (\ref{def-proj}), yields
$u'(1)=(\Pi_{\Lambda}^{r}u)'(1)$. If $r \ge 3$,  selecting
$\varphi=x$ in  (\ref{def-proj}) and performing an integration by
parts, then using the second condition of (\ref{def-proj}), gives
$u(1)=\Pi_{\Lambda}^{r}u(1)$. Therefore, we have $\Pi_{\Lambda}^{r}u
(\pm 1)=u(\pm 1)$  and $(\Pi_{\Lambda}^{r}u)' (\pm 1)=u'(\pm 1)$ for
  $r \ge 3$.

We next show the uniqueness of $\Pi_{\Lambda}^{r}u$ for $r > 3$. Let
$u_1$ and $u_2$ be two polynomials in $P_r(\Lambda)$ that satisfy
(\ref{def-proj}). Then, the difference $u_1-u_2$ can be expanded by
the Legendre series
\begin{equation}\label{I-1}
u_1-u_2=\ds\sum_{i=0}^{r} c_iL_i(x),
\end{equation}
where $c_i=\frac{2i+1}{2}\int_{-1}^1 (u_1-u_2)L_idx$. By
(\ref{def-proj}), there holds
\begin{equation*}
\ds\int_{\Lambda}(u_1-u_2)''\varphi dt=0 ,\quad \forall \varphi \in
P_{r-2}(\Lambda).
\end{equation*}
Then, by  integration by parts and the fact $(u_1-u_2)(\pm
1)=(u_1-u_2)'(\pm 1)=0$, we have
\begin{equation*}
\ds\int_{\Lambda}(u_1-u_2)\varphi'' dt=0 ,\quad \forall \varphi \in
P_{r-2}(\Lambda),
\end{equation*}
Using the orthogonality properties of the Legendre polynomials, we
get  $c_i=0$ for $0\leq i\leq r-4$. Hence, the difference $u_1-u_2$
is given by
\begin{equation*}
u_1-u_2=c_{r-3}L_{r-3}+c_{r-2}L_{r-2}+c_{r-1}L_{r-1}+c_{r}L_{r}.
\end{equation*}
Moreover, using $L_i(\pm 1)=(\pm 1)^i$ and the fact $(u_1-u_2)(\pm
1)=(u_1-u_2)'(\pm 1)=0$ again, we obtain
$c_{r-3}=c_{r-2}=c_{r-1}=c_{r}=0$. Hence,  we have $u_1-u_2\equiv
0$, which proves the uniqueness of a polynomial satisfying the
conditions in Definition \ref{proj} for $r>3$. The existence follows
immediately   by setting
\begin{equation}\label{I-5}
\Pi_{\Lambda}^{r}u(x)=H_3u(x)+\ds\sum\limits_{i=4}^{r}b_i
J^{-2,-2}_i(x),
\end{equation}
which is the truncation of  the expansion of $u$ given in
(\ref{u-exp-1}). By the properties of the generalized Jacobi
polynomial $J^{-2,-2}_i(x)$ listed in Lemma \ref{pro-plo}, it is
easy to verify that the polynomial $\Pi_{\Lambda}^{r}u$  defined in
(\ref{I-5}) satisfies  (\ref{def-proj}). This completes the proof.
\end{proof}

\subsection{proof of Lemma \ref{Pi-pro}} \label{app-A-3}

\begin{proof}
We first define the following weighted $L^2$-norm by
$$\|u^{(s+1)}\|^2_{L^2_{\omega^{s-1,s-1}}(\Lambda)}:=\ds\int_{-1}^{1} |u^{(s+1)}|^2(1-x^2)^{s-1}dx.$$
 Clearly, for any integer $s\ge 1$, there holds
\begin{equation}\label{normeq}
 \|u^{(s+1)}\|^2_{L^2_{\omega^{s-1,s-1}}(\Lambda)} \le
\|u^{(s+1)}\|^2_{L^2(\Lambda)}.
\end{equation}
Moreover, by (\ref{Leg-ort-1}) and (\ref{u-exp}), we  have
\begin{eqnarray} \label{u-exp-2}
\begin{aligned}
 \|u^{(s+1)}\|^2_{L^2_{\omega^{s-1,s-1}}(\Lambda)}=& \ds\int_{-1}^{1} \Big(\ds\sum\limits_{i=s-1}^{\infty}a_i L_i^{(s-1)}(x)\Big)^2
 (1-x^2)^{s-1}dx\\
 =& \ds\sum\limits_{i=s-1}^{\infty}a_i^2 \ds\int_{-1}^{1} (L_i^{(s-1)}(x))^2
 (1-x^2)^{s-1}dx\\
 =& \ds\sum\limits_{i=s-1}^{\infty}\ds\frac{2}{2i+1} \ds\frac{(i+s-1)!}{(i-s+1)!}
 a_i^2,
\end{aligned}
\end{eqnarray}
where $a_i=\frac{2i+1}{2}\int_{-1}^1 u''L_idx$. Then, by
(\ref{u-exp-1}), (\ref{I-5}),  (\ref{J-ort}), (\ref{normeq}),  and
(\ref{u-exp-2}), we obtain
\begin{equation*}\label{I-6}
\begin{aligned}
\|u-\Pi_{\Lambda}^{r}u\|^2_{L^2(\Lambda)}=& \ds\int_{-1}^{1}\Big(
\ds\sum\limits_{i=r+1}^{\infty} b_iJ^{-2,-2}_i(x) \Big)^2dx
\le \ds\int_{-1}^{1}\Big( \ds\sum\limits_{i=r+1}^{\infty} b_iJ^{-2,-2}_i(x) \Big)^2 (1-x^2)^{-2}dx\\
=& \ds\sum\limits_{i=r+1}^{\infty} b_{i}^2 \gamma^{2,2}_{i-4}
=\ds\sum\limits_{i=r+1}^{\infty} \ds\frac{2}{i(i-1)(i-2)(i-3)(2i-3)} a_{i-2}^2\\
=&\ds\sum\limits_{i=r-1}^{\infty}\left( \ds\frac{2}{2i+1}\ds\frac{(i+s-1)!}{(i-s+1)!} a_{i}^2\right) \ds\frac{1}{(i+2)(i+1)i(i-1)} \ds\frac{(i-s+1)!}{(i+s-1)!}\\
\le &\ds\frac{1}{(r+1)r(r-1)(r-2)} \ds\frac{(r-s)!}{(r+s-2)!}
\|u^{(s+1)}\|^2_{L^2(\Lambda)}
\end{aligned}
\end{equation*}
for any integer $s$, $1\leq s \leq \min\{r,s_0\}$. This completes
the proof of (\ref{L2-pro}).

Similarly, by (\ref{u-exp-1}), (\ref{I-5}),  (\ref{J-4}),
(\ref{J-ort}), (\ref{normeq}),
 and (\ref{u-exp-2}), we have
\begin{equation*}\label{I-7}
\begin{aligned}
\|(u-\Pi_{\Lambda}^{r}u)'\|^2_{L^2(\Lambda)}=& \ds\int_{-1}^{1}\Big(
\ds\sum\limits_{i=r+1}^{\infty} b_i \partial_x J^{-2,-2}_i(x)
\Big)^2dx
\le \ds\int_{-1}^{1}\Big( \ds\sum\limits_{i=r+1}^{\infty} b_i \partial_x J^{-2,-2}_i(x) \Big)^2 (1-x^2)^{-1}dx\\
=&\ds\int_{-1}^{1}\Big( \ds\sum\limits_{i=r+1}^{\infty} -2(i-3)b_i J^{-1,-1}_{i-1}(x) \Big)^2 (1-x^2)^{-1}dx\\
=&\ds\sum\limits_{i=r+1}^{\infty} \ds\frac{1}{4(i-2)^2} a_{i-2}^2
\gamma^{1,1}_{i-3}
=\ds\sum\limits_{i=r+1}^{\infty}\ds\frac{2}{(2i-3)(i-1)(i-2)} a_{i-2}^2\\
=&\ds\sum\limits_{i=r-1}^{\infty} \left( \ds\frac{2}{2i+1}\ds\frac{(i+s-1)!}{(i-s+1)!} a_{i}^2\right)  \ds\frac{1}{(i+1)i} \ds\frac{(i-s+1)!}{(i+s-1)!}\\
\le &\ds\frac{1}{r(r-1)} \ds\frac{(r-s)!}{(r+s-2)!}
\|u^{(s+1)}\|^2_{L^2(\Lambda)}
\end{aligned}
\end{equation*}
for any integer $s$, $1\leq s \leq \min\{r,s_0\}$. This completes
the proof of (\ref{H1-pro}).

Finally, by (\ref{u-exp-1}), (\ref{I-5}),   (\ref{Leg-ort}),
(\ref{J-4}), (\ref{normeq}), and (\ref{u-exp-2}), we get
\begin{equation*}\label{I-7}
\begin{aligned}
\|(u-\Pi_{\Lambda}^{r}u)''\|^2_{L^2(\Lambda)}=&
\ds\int_{-1}^{1}\Big( \ds\sum\limits_{i=r+1}^{\infty} b_i
\partial^2_x J^{-2,-2}_i(x) \Big)^2dx
=\ds\int_{-1}^{1}\Big(\ds\sum\limits_{i=r+1}^{\infty}  a_{i-2} L_{i-2}(x)\Big)^2 dx\\
=&\ds\sum\limits_{i=r-1}^{\infty}\ds\frac{2}{2i+1} a_{i}^2 =
\ds\sum\limits_{i=r-1}^{\infty} \left( \ds\frac{2}{2i+1}\ds\frac{(i+s-1)!}{(i-s+1)!} a_{i}^2\right) \ds\frac{(i-s+1)!}{(i+s-1)!}\\
\le &\ds \ds\frac{(r-s)!}{(r+s-2)!} \|u^{(s+1)}\|^2_{L^2(\Lambda)}
\end{aligned}
\end{equation*}
for any integer $s$, $1\leq s \leq \min\{r,s_0\}$. This completes
the proof of (\ref{H2-pro}).
\end{proof}

\end{document}